\newtheorem{thm}{Theorem}[section]
\newtheorem{lem}[thm]{Lemma}
\newtheorem{prop}[thm]{Proposition}
\newtheorem{cor}[thm]{Corollary}
\theoremstyle{definition}
\newtheorem{defn}[thm]{Definition}
\newtheorem{remk}[thm]{Remark}
\newtheorem{conj}[thm]{Conjecture}
\newtheorem{quest}[thm]{Question}
\numberwithin{equation}{section}
    \newtheoremstyle{TheoremNum}
        {\topsep}{\topsep}              
        {\itshape}                      
        {}                              
        {\bfseries}                     
        {.}                             
        { }                             
        {\thmname{#1}\thmnote{ \bfseries #3}}
    \theoremstyle{TheoremNum}
    \newtheorem{thmn}{Theorem}
\newcommand{\A}{\mathbb{A}}      
\newcommand{\C}{\mathbb{C}}
\newcommand{\R}{\mathbb{R}}
\newcommand{\Z}{\mathbb{Z}}
\newcommand{\Q}{\mathbb{Q}}
\newcommand{\PP}{\mathbb{P}}
\newcommand{\TT}{\mathbb{T}}
\newcommand{\m}{\mathrm{m}}
\newcommand{\J}{\mathcal J}
\newcommand{\K}{\mathcal K}
\newcommand{\Aut}{\operatorname{Aut}}
\newcommand{\Gal}{\operatorname{Gal}}
\newcommand{\PrePer}{\operatorname{PrePer}}
\author{Annie Carter}
\author{Matilde Lal\'in}
\author{Michelle Manes}
\author{Alison Beth Miller} 
\address{Annie Carter: Department of Mathematics, University of California San Diego, 9500 Gilman Drive \# 0112, La Jolla, CA  92093-0112, USA}\email{a4carter@ucsd.edu}
\address{Matilde Lal\'in:  D\'epartement de math\'ematiques et de statistique,
                                    Universit\'e de Montr\'eal.
                                    CP 6128, succ. Centre-ville.
                                     Montreal, QC H3C 3J7, Canada}\email{mlalin@dms.umontreal.ca}
\address{Michelle Manes: Department of Mathematics, University of Hawaii, 2565 McCarthy Mall, Honolulu, HI 96822, USA}\email{mmanes@math.hawaii.edu}
\address{Alison Beth Miller: Mathematical Reviews, 416 Fourth St., Ann Arbor, MI 48103, USA}\email{alimil@umich.edu }
\title{Dynamical Mahler Measure: A survey and some recent results}
\subjclass[2020]{Primary 11R06; Secondary 11G50, 37P15, 37P30}
\keywords{Mahler measure, dynamical Mahler measure, polynomial, preperiodic points, equidistribution, dynamical heights}
\begin{document}

	\begin{abstract}
We study the dynamical Mahler measure of multivariate polynomials and present dynamical analogues of various results from the classical Mahler measure as well as examples of formulas allowing the computation of the dynamical Mahler measure in certain cases. We discuss multivariate analogues of dynamical Kronecker's Lemma and present some improvements on the result for two variables due to  Carter, Lal\'in, Manes, Miller, and Mocz.
	\end{abstract}

	\maketitle
	

\section{Introduction}
The inspiration for our investigation comes from the following result which relates 
the canonical height $\hat{h}_f$ (see Definition~\ref{height-definition}) of a point $\alpha \in \PP^1(\overline{\Q})$ relative to some polynomial $f\in \Q[z]$ with an integral of the minimal polynomial of $\alpha$ relative to an invariant measure defined by $f$.

\begin{thm}[\cite{PST}]
Let $f\in\Q[z]$ be a polynomial, and let
$\J_{f}$ denote the Julia set of $f$. Let $K$ be a number field with $\alpha \in \PP^1(K)$, and let $P \in \Z[x]$ be the minimal polynomial for $\alpha$.  Then:

\begin{equation}\label{eq:dynMah1D}
[\Q(\alpha):\Q]\hat h_f(\alpha) = 
\int_{\J_{f}}\log |P(z)| d\mu_{f}(z).
\end{equation}

\end{thm}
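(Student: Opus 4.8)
The plan is to pass from the global canonical height to an adelic sum of local terms, evaluate each by potential theory, and recombine via the product formula. Set $d=[\Q(\alpha):\Q]$ and let $\alpha_1,\dots,\alpha_d$ be the Galois conjugates of $\alpha$, so that $P(z)=a\prod_{i=1}^{d}(z-\alpha_i)$ with $a=\operatorname{lc}(P)\in\Z$. Since $f\in\Q[z]$ the canonical height is Galois invariant, whence $d\,\hat h_f(\alpha)=\sum_{i=1}^{d}\hat h_f(\alpha_i)$; and since $\infty$ is totally invariant under the polynomial $f$, one has the decomposition $\hat h_f(\beta)=\sum_{v}G_{f,v}(\beta)$ over the places $v$ of $\Q$, where $G_{f,v}(z)=\lim_{n\to\infty}(\deg f)^{-n}\log^+|f^n(z)|_v$ is the $v$-adic escape-rate function (a local canonical height), only finitely many summands being nonzero. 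Fixing for each $v$ an embedding $\overline{\Q}\hookrightarrow\C_v$ and viewing the $\alpha_i$ in $\C_v$, this gives
\begin{equation*}
d\,\hat h_f(\alpha)=\sum_{v}\,\sum_{i=1}^{d}G_{f,v}(\alpha_i),
\end{equation*}
so the task is to show that the right-hand side collapses to the stated archimedean integral.

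The heart of the proof is the archimedean place. Here $G_{f,\infty}$ is the Green's function of the filled Julia set $K_f$ with logarithmic pole at $\infty$, and $\mu_f=\tfrac{1}{2\pi}\Delta G_{f,\infty}$ is the associated equilibrium measure, supported on $\J_f=\partial K_f$. The function $z\mapsto G_{f,\infty}(z)-\int_{\J_f}\log|z-w|\,d\mu_f(w)$ is harmonic on all of $\C$ (both terms have distributional Laplacian $2\pi\mu_f$) and bounded (both grow like $\log|z|$ at infinity), hence is a constant, which comparison of the expansions at $\infty$ identifies as $\tfrac{1}{\deg f-1}\log|\ell|$ with $\ell=\operatorname{lc}(f)$ (equivalently, $\operatorname{cap}(K_f)=|\ell|^{-1/(\deg f-1)}$). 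Evaluating at the $\alpha_i$, summing, and using $\sum_i\log|w-\alpha_i|=\log|P(w)|-\log|a|$ together with $\mu_f(\J_f)=1$ yields
\begin{equation*}
\sum_{i=1}^{d}G_{f,\infty}(\alpha_i)=\int_{\J_f}\log|P(z)|\,d\mu_f(z)-\log|a|+\frac{d}{\deg f-1}\log|\ell|.
\end{equation*}

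The same argument runs at each finite place $v$, now using potential theory on the Berkovich projective line (with $\mu_{f,v}=\tfrac{1}{2\pi}\Delta G_{f,v}$ the $v$-adic equilibrium measure on the $v$-adic Julia set $\J_{f,v}$), and gives $\sum_i G_{f,v}(\alpha_i)=\int_{\J_{f,v}}\log|P|_v\,d\mu_{f,v}-\log|a|_v+\tfrac{d}{\deg f-1}\log|\ell|_v$. Summing over all $v$ and invoking the product formula to kill $\sum_v\log|a|_v$ and $\sum_v\log|\ell|_v$ leaves $d\,\hat h_f(\alpha)=\sum_v\int_{\J_{f,v}}\log|P|_v\,d\mu_{f,v}$. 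It remains to dispose of the finite-place integrals: since $P\in\Z[x]$ is primitive, at each place $v$ of good reduction $\J_{f,v}$ reduces to the Gauss point, at which $\log|P|_v$ equals the logarithm of the Gauss norm of $P$, which is $0$; here one uses that $f$ has good reduction at every finite place (automatic when, e.g., $f$ is monic over $\Z$), so that no further terms remain and we are left with $d\,\hat h_f(\alpha)=\int_{\J_f}\log|P(z)|\,d\mu_f(z)$, the claim.

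The main obstacle is the non-archimedean side: carrying out potential theory on the Berkovich line, giving meaning to and evaluating $\int_{\J_{f,v}}\log|P|_v\,d\mu_{f,v}$, and verifying the cancellation of the leading-coefficient corrections before these terms vanish; the archimedean input is classical (Brolin, Lyubich). As an alternative to the potential-theoretic derivation of the archimedean identity, one may instead use Brolin--Lyubich equidistribution of iterated preimages, $(\deg f)^{-n}\sum_{f^n(w)=w_0}\delta_w\to\mu_f$, together with the resultant identity $\prod_{f^n(w)=w_0}P(w)=\pm\,a^{(\deg f)^{n}}c_n^{-d}\prod_i\bigl(f^n(\alpha_i)-w_0\bigr)$, where $c_n$ is the leading coefficient of $f^n$, and the limit $(\deg f)^{-n}\log|f^n(\alpha_i)-w_0|\to G_{f,\infty}(\alpha_i)$, taking care with the logarithmic singularities of $\log|P|$ against the equidistributing measures.
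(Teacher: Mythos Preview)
The paper does not actually prove this statement: it is quoted from \cite{PST} as motivation and background, with no proof given in the present paper. So there is no ``paper's own proof'' to compare your proposal against.

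That said, your sketch follows the standard adelic route used in the literature (Pineiro--Szpiro--Tucker, and in the Berkovich framework Baker--Rumely, Favre--Rivera-Letelier, Chambert-Loir): decompose $\hat h_f$ into local escape-rate Green's functions, identify each as the potential of the local equilibrium measure up to a capacity term, sum over places, and cancel the leading-coefficient corrections by the product formula. The archimedean identity you write down is exactly the dynamical Jensen formula that the paper later states as Proposition~\ref{dynamical-jensen}, so your argument is well aligned with the paper's point of view.

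There is one genuine gap you have already half-flagged. The theorem as stated has $f\in\Q[z]$, but your final step---``$f$ has good reduction at every finite place (automatic when, e.g., $f$ is monic over $\Z$)''---does not hold at that level of generality; for a non-monic $f$, or one with denominators, the $v$-adic Julia set need not be the Gauss point and the local integrals need not vanish. In that case the correct identity retains a sum of $v$-adic Mahler-type integrals over all places, not just the archimedean one. Either restrict to monic $f\in\Z[z]$ (which is in fact the standing hypothesis in the rest of the paper, see Definition~\ref{def:dmm-defi}), or keep the full adelic sum on the right-hand side; as written, your last sentence asserts a cancellation that is not justified under the stated hypotheses.
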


Compare this with a standard formula relating the Mahler measure of the minimal polynomial $P$ and the height of a root of that polynomial:
\begin{equation}\label{eq:mahlerheight}
[\Q(\alpha) : \Q] h(\alpha)  
= \underbrace{\frac{1}{2\pi i}\int_{\TT^1}  \log \left| P\left(z \right)\right| \frac{dz}{z}}_{\m(P)}.
\end{equation}

The tantalizing similarities in these formulas lead naturally to questions about extending classical results of Mahler measure to this new ``dynamical Mahler measure'' relative to a fixed polynomial~$f$.   In~\cite{TwoVarPoly}, the authors define a multivariate  dynamical Mahler measure and prove several preliminary results with this flavor.  
This survey article presents background, motivation, examples, and strengthening of those results, both illustrating and expanding on the work begun in~\cite{TwoVarPoly}.  

 In Section~\ref{sec:basics}, we provide background on Mahler measure, arithmetic dynamics, and equilibrium measures. In Section~\ref{sec:DMMdef}, we define the multivariate dynamical Mahler measure and give examples where it  is possible to compute it exactly. Section~\ref{sec:PrevPaper} provides a summary of results from~\cite{TwoVarPoly}, drawing explicit connections between classical Mahler measure and the dynamical setting. In Section~\ref{sec:DynMahlerConverge}, we prove the existence of dynamical Mahler measure as defined in the previous section; these proofs also appear in~\cite{TwoVarPoly} but are reiterated here (with a bit more detail) to provide a self-contained reference to the subject. Section~\ref{sec:DynKronLem} contains a survey of recent results on properties that are either known or conjectured to be equivalent to a multivariate polynomial having dynamical Mahler measure zero. Section~\ref{sec:e1e2} contains the proof of a new implication of this sort, and Section~\ref{sec:ad} contains strengthening of one of the results from~\cite{TwoVarPoly}. In particular, in the proof of the two-variable Dynamical Kronecker's Lemma, we replace the (rather strong) assumption of Dynamical Lehmer's Conjecture with an assumption about the preperiodic points for the polynomial $f$. Finally, Section~\ref{sec:J=K} investigates that condition on polynomials $f$.

\subsection*{Acknowledgements} We are grateful to Patrick Ingram for suggesting that we study the dynamical Mahler measure of multivariate polynomials. Thanks to Patrick Ingram and to Rob Benedetto for many helpful discussions. We thank the organizers of the BIRS workshop ``Women in Numbers 5'',  Alina Bucur, Wei Ho, and Renate Scheidler, for their leadership and encouragement that extended for the whole duration of this project. This work has been partially supported by 
the Natural Sciences and Engineering Research Council
of Canada (Discovery Grant 355412-2013 to ML), the Fonds de recherche du Qu\'ebec - Nature et technologies (Projets de recherche en \'equipe 256442 and 300951 to ML), the Simons Foundation (grant number 359721 to MM), and the National Science Foundation (grant DMS-1844206 supporting AC).

\section{Basic Notions}
\label{sec:basics}
In this section, we provide preliminary material on both Mahler measure and arithmetic dynamics. We refer the interested reader to~\cite{Bertin-Lalin}  for a more comprehensive  article describing the history and applications of Mahler measure in arithmetic geometry and to~\cite{CurrentTrends}  for background and motivation from the arithmetic dynamics perspective.

\subsection{Mahler Measure}\label{sec:ClassicalMahler}
The (logarithmic) Mahler measure of a non-zero polynomial $P\in \C[x]$, originally defined by Lehmer~\cite{Le}, is a height function given by
\begin{equation}\label{eq:defMmeas}
\m(P) = \m\left(a \prod_j(x-\alpha_j)\right)=\log |a| + \sum_j \log \max \{1,|\alpha_j|\}.
\end{equation}

If $P\in \Z[x]$, the formula above makes it clear that $\m(P) \geq 0$.   In such a case, it is natural to ask which polynomials $P\in \Z[x]$ satisfy $\m(P)=0$. 
A result of Kronecker~\cite{Kronecker} gives the answer.

 \begin{lem}[Kronecker's Lemma] Let $P\in \Z[x]$. Then $\m(P)=0$ if and only if $P$ is monic and can be decomposed as a product of a monomial and cyclotomic polynomials. 
\end{lem}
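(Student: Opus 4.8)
The plan is to prove the elementary implication directly from the defining formula \eqref{eq:defMmeas} and to derive the substantive implication from Kronecker's classical fact that a nonzero algebraic integer all of whose conjugates lie in the closed unit disc is a root of unity.

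For the ``if'' direction, write $P = \varepsilon\, x^k\prod_i \Phi_{n_i}(x)$ with $\varepsilon\in\{\pm 1\}$; then $|a| = 1$, so $\log|a| = 0$ in \eqref{eq:defMmeas}, and every root of $P$ is either $0$ or a root of unity and hence of absolute value at most $1$, so every term $\log\max\{1,|\alpha_j|\}$ vanishes and $\m(P) = 0$. For the converse, suppose $P = a\prod_j(x-\alpha_j)\in\Z[x]$ has $\m(P) = 0$. Each summand in \eqref{eq:defMmeas} is nonnegative (as $a\in\Z$, $\log|a|\ge 0$), so all of them vanish: $|a| = 1$, forcing $a = \pm 1$, and $|\alpha_j|\le 1$ for every $j$. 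Since $P$ is now monic up to sign with integer coefficients, each root $\alpha_j$ is an algebraic integer; and since $P\in\Q[x]$, the minimal polynomial of any $\alpha_j$ divides $P$, so every Galois conjugate of $\alpha_j$ is again a root of $P$ and hence also lies in the closed unit disc.

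It remains to show that a nonzero algebraic integer $\alpha$ whose conjugates $\alpha = \alpha_1,\dots,\alpha_d$ all satisfy $|\alpha_i|\le 1$ must be a root of unity. For each $k\ge 1$ put $g_k(x) = \prod_{i=1}^d(x-\alpha_i^k)$. The multiset $\{\alpha_i^k\}_i$ is stable under $\Gal(\overline{\Q}/\Q)$ and consists of algebraic integers, so $g_k\in\Z[x]$; it is monic of degree $d$ with all roots in the closed unit disc, so its coefficient of $x^{d-i}$ has absolute value at most $\binom{d}{i}$. Hence $\{g_k : k\ge 1\}$ is finite, and by the pigeonhole principle $g_k = g_\ell$ for some $1\le k<\ell$; equivalently there is a permutation $\sigma$ of $\{1,\dots,d\}$ with $\alpha_i^{\ell} = \alpha_{\sigma(i)}^{k}$ for all $i$. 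Iterating this identity $N$ times, where $N$ is the order of $\sigma$, yields $\alpha_i^{\ell^N} = \alpha_i^{k^N}$; since $\alpha_i\ne 0$ and $\ell^N - k^N\ge 1$, each $\alpha_i$, and in particular $\alpha$, is an $(\ell^N-k^N)$-th root of unity, so the minimal polynomial of $\alpha$ is a cyclotomic polynomial $\Phi_n$.

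Applying this to each nonzero root of $P$ gives $P = \pm x^k Q(x)$ with $Q\in\Q[x]$ monic and every root of $Q$ a root of unity; grouping the monic $\Q$-irreducible factors of $Q$ --- each of which, possessing a root of unity among its roots, equals some $\Phi_n$ --- writes $P$ as a monomial $\pm x^k$ times cyclotomic polynomials, which is the asserted form (with leading coefficient $1$ exactly when $P$ is monic). I expect the only real obstacle to be the argument of the third paragraph, and within it the key idea: the polynomials $g_k$ are confined to a finite set of bounded-height monic integer polynomials of fixed degree, which forces a collision $g_k = g_\ell$ and hence the multiplicative relation among the conjugates that pins $\alpha$ down. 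The other steps --- nonnegativity of the terms in \eqref{eq:defMmeas}, integrality of the roots, and the classification of monic $\Q$-irreducible polynomials having a root of unity as a root --- are routine.
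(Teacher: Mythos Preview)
The paper does not supply its own proof of this lemma; it merely states the result and attributes it to Kronecker~\cite{Kronecker}. Your argument is correct and is precisely the classical one: the easy direction from the explicit formula~\eqref{eq:defMmeas}, then the observation that $\m(P)=0$ forces $|a|=1$ and all roots in the closed unit disc, followed by Kronecker's own finiteness trick (the polynomials $g_k(x)=\prod_i(x-\alpha_i^k)$ lie in a finite set of bounded-height monic integer polynomials of degree $d$, forcing a collision and hence a root-of-unity relation). One very small remark: as stated, the lemma says ``$P$ is monic,'' but your proof (correctly) only yields leading coefficient $\pm 1$; this is the usual harmless ambiguity in how the statement is phrased, and you handle it appropriately by allowing $\varepsilon\in\{\pm1\}$.
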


Lehmer~\cite{Le} computed 
\[ \m(x^{10}+x^9-x^7-x^6-x^5-x^4-x^3+x+1) = \log(1.176280818\dots) = 0.162357612\dots \]
and asked the following:

\begin{quest}[Lehmer's question, 1933]
Is there a constant $C > 0$ such that for every polynomial $P \in \Z[x]$ with $\m(P) >0$, then $\m(P)\geq C$?
\end{quest}
Lehmer's question remains open, and his degree-10 polynomial remains the integer polynomial with the smallest known positive measure.

 Jensen's formula~\cite{Jensen} relates an average of a linear polynomial over the unit circle with the size of its root:
 \begin{equation}\label{eq:Jensen}
\frac{1}{2\pi i}\int_{\TT^1} \log \left|x - \alpha \right| \frac{dx}{x} = \log \max\{ 1, |\alpha| \}.
\end{equation}

Applying Jensen's formula to the definition of Mahler measure in~\eqref{eq:defMmeas}, we find a formula that can be extended naturally to multivariate polynomials  and   rational functions. 
Following Mahler~\cite{Mah}, we have:
\begin{defn}\label{defn:MM}The (logarithmic) Mahler measure of a non-zero rational function $P \in \C(x_1,\dots,x_n)$ is  defined by 
\begin{equation*}
 \m(P):=\frac{1}{(2\pi i)^n}\int_{\mathbb{T}^n}\log|P(x_1,\dots, x_n)|\frac{dx_1}{x_1}\cdots \frac{dx_n}{x_n},
\end{equation*}
where $\mathbb{T}^n=\{(x_1,\dots,x_n)\in \mathbb{C}^n : |x_1|=\cdots=|x_n|=1\}$. 
\end{defn}
The above integral converges, and for  $P\in \Z[x_1,\dots,x_n]$, we still have $\m(P)\geq 0$ (see Proposition~\ref{prop:convergence}). It is natural, then,  to consider whether Kronecker's Lemma has an extension to multivariate polynomials. Recall that a polynomial in $\Z[x_1,\dots,x_n]$ is said to be \textbf{primitive} if the coefficients have no non-trivial factor. We have the following result. 

\begin{thm}\cite[Theorem 3.10]{Everestward} \label{thm:HigherdKronecker}
For any primitive polynomial $P\in \Z[x_1,\dots,x_n]$, we have $\m(P) =0$ if and only if $P$ is the product of a monomial and cyclotomic polynomials evaluated on monomials. 

\end{thm}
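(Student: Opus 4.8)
The plan is to treat the two implications separately; the ``if'' direction is soft, while the converse will rest on a rigidity theorem for torsion points on subvarieties of a torus. For ``if'', observe that $\m$ is additive under multiplication and vanishes on monomials, so it suffices to show $\m\big(\Phi_k(x_1^{a_1}\cdots x_n^{a_n})\big)=0$ for one cyclotomic polynomial evaluated on one monomial. Writing the exponent vector as $\mathbf a=d\,\mathbf b$ with $d=\gcd(a_1,\dots,a_n)$ and $\mathbf b$ primitive, I would complete $\mathbf b$ to a $\Z$-basis of $\Z^n$; the resulting monomial change of variables lies in $\mathrm{GL}_n(\Z)$, preserves $\mathbb T^n$ together with its Haar measure, and hence preserves $\m$. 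This reduces the claim to $\m\big(\Phi_k(x_1^{d})\big)=0$, which holds because $\Phi_k(x_1^{d})$ is a monic integer polynomial all of whose roots are roots of unity, so its Mahler measure vanishes by~\eqref{eq:defMmeas}.

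For the converse, first factor $P$ into primitive irreducibles: from $0=\m(P)=\sum_i e_i\,\m(P_i)$ with each $\m(P_i)\ge 0$ we get $\m(P_i)=0$ for every $i$, so we may assume $P$ is irreducible and primitive. Induct on $n$, the base case $n=1$ being Kronecker's Lemma, and assume $P$ genuinely involves every variable, with $n\ge 2$ (otherwise apply the statement in fewer variables). Write $\mathbf y=(x_1,\dots,x_{n-1})$, let $d=\deg_{x_n}P\ge 1$, let $c(\mathbf y)$ be the leading coefficient of $P$ in $x_n$, and let $\rho_1(\mathbf y),\dots,\rho_d(\mathbf y)$ be its $x_n$-roots. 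Integrating first over $x_n$ and applying Jensen's formula~\eqref{eq:Jensen} gives
\[
0=\m(P)=\m(c)+\frac{1}{(2\pi i)^{n-1}}\int_{\mathbb T^{n-1}}\sum_{k=1}^{d}\log\max\{1,|\rho_k(\mathbf y)|\}\,\frac{d\mathbf y}{\mathbf y},
\]
and since both summands are nonnegative, they both vanish. Vanishing of the integral --- whose integrand is nonnegative and continuous off $\{c=0\}$ --- forces $|\rho_k(\mathbf y)|\le 1$ for all roots and all $\mathbf y\in\mathbb T^{n-1}$ with $c(\mathbf y)\ne 0$; moreover, if $c$ vanished somewhere on $\mathbb T^{n-1}$ then some $\rho_k$ would be unbounded near that point and the integral would be strictly positive, so $c$ has no zeros on $\mathbb T^{n-1}$. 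Together with $\m(c)=0$, the inductive hypothesis presents $c$ as a monomial times cyclotomic polynomials evaluated on monomials; but each such cyclotomic factor $\Phi_m(\mathbf y^{\mathbf b})$ with $\mathbf b\ne 0$ does vanish somewhere on $\mathbb T^{n-1}$, so there are none, i.e.\ $c$ is $\pm$ a monomial.

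Dividing $P$ by that monomial, we may assume $P$ is monic in $x_n$ with Laurent coefficients in $\mathbf y$; since $P$ is irreducible and involves every variable, $x_n\nmid P$, so its $x_n$-constant term $c_0(\mathbf y)$ is a nonzero Laurent polynomial. Now fix any tuple $\boldsymbol\zeta=(\zeta_1,\dots,\zeta_{n-1})$ of roots of unity with $c_0(\boldsymbol\zeta)\ne 0$. Then $P(\boldsymbol\zeta,x_n)$ is monic with algebraic-integer coefficients, so each of its roots $\rho$ is an algebraic integer; every Galois conjugate of $\rho$ is a root of $P(\sigma\boldsymbol\zeta,x_n)$ for a tuple $\sigma\boldsymbol\zeta$ again consisting of roots of unity (so $\sigma\boldsymbol\zeta\in\mathbb T^{n-1}$), and therefore has absolute value $\le 1$ by the preceding analysis --- which now applies at every point of $\mathbb T^{n-1}$, as $c$ is a monomial. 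By Kronecker's Lemma, $\rho$ is a root of unity. Hence the hypersurface $V(P)\subset\G_m^n$ contains all the torsion points $(\boldsymbol\zeta,\rho)$, and they are Zariski dense in $V(P)$: over the Zariski-dense set of admissible $\boldsymbol\zeta$ we have produced the full fibre of the projection $V(P)\to\G_m^{n-1}$, which is finite since $P$ is monic of degree $d$ in $x_n$. By the Manin--Mumford theorem for $\G_m^n$, $V(P)$ is a finite union of torsion cosets of subtori; being a hypersurface whose irreducible components are permuted transitively by $\Gal(\overline\Q/\Q)$ (as $P$ is irreducible over $\Z$), its components are the sets $\{\mathbf x^{\mathbf m}=\omega\}$ as $\omega$ runs over a single Galois orbit of roots of unity, for one primitive $\mathbf m\in\Z^n$; that is, $V(P)=\{\Phi_k(\mathbf x^{\mathbf m})=0\}$ for some $k$. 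Since $P$ and $\Phi_k(\mathbf x^{\mathbf m})$ are both reduced and cut out the same subset of $\G_m^n$, they differ by a unit of $\Z[\mathbf x^{\pm 1}]$, i.e.\ by $\pm$ a monomial --- precisely the asserted normal form.

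The elementary direction and the various reductions are routine. I expect the main obstacle to be the rigidity input used at the end: that a subvariety of $\G_m^n$ containing a Zariski-dense set of torsion points is a finite union of torsion cosets of subtori (a theorem of Laurent; for the hypersurface at hand one could alternatively argue via Bilu's equidistribution theorem, or re-prove the case needed by a longer induction internal to the problem). A secondary delicate point is the step ``$\m(P)=0\Rightarrow$ the leading coefficient of $P$ in each variable is $\pm$ a monomial,'' together with the bookkeeping that distinguishes Laurent monomials from honest ones when recording the final normal form.
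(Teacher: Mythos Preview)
The paper does not supply its own proof of this theorem: it is quoted with a citation to \cite[Theorem~3.10]{Everestward} and used as background, so there is nothing in the paper to compare your argument against line by line.

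That said, your proposal is a coherent and essentially correct proof. The ``if'' direction is handled exactly as one would expect. For the converse, your scheme --- show the leading coefficient in $x_n$ is $\pm$ a monomial, deduce that over every torsion tuple $\boldsymbol\zeta\in\mathbb T^{n-1}$ the $x_n$-roots are algebraic integers whose Galois conjugates all lie in the closed unit disk, apply Kronecker to get roots of unity, and then invoke Laurent's theorem (Manin--Mumford for $\mathbb G_m^n$) --- is a legitimate route and is in fact the conceptual one. The reference \cite{Everestward} proceeds somewhat differently, relying more on the combinatorics of extended cyclotomic polynomials rather than on the torsion-coset structure theorem, so if anything your argument trades elementary self-containedness for a clean appeal to a deeper input. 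You flag this honestly at the end.

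Two places deserve a word of care when you write it out. First, ``if $c$ vanished somewhere on $\mathbb T^{n-1}$ then the integral would be strictly positive'' is true but is precisely the content of the analytic lemma you would otherwise have to prove (compare \cite[Lemma~3.20]{Everestward} or, in this paper, Lemma~\ref{leading-coefficient}); make sure to justify why the logarithmic pole contributes positive mass. Second, the Zariski-density claim ``full fibre over a Zariski-dense set of $\boldsymbol\zeta$ implies dense in $V(P)$'' is not automatic for finite surjections in general; here it holds because each geometric component of $V(P)$ maps dominantly (hence surjectively, by finiteness) onto $\mathbb G_m^{n-1}$, which you should note explicitly.
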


A  connection between the single-variable case and the multivariate case is given by a result due to Boyd ~\cite{Boyd-speculations} and Lawton~\cite{Lawton}.  

\begin{thm}\cite[Theorem 2]{Lawton} If $P\in \C(x_1,\dots,x_n)^\times$, then 
\begin{equation}\label{eq:BL}
\lim_{q({\bf k})\rightarrow \infty} \m (P(x,x^{k_2},\dots,x^{k_n}))=\m(P(x_1,\dots,x_n)),
\end{equation}
where 
\[q({\bf k})=\min \left\{ H({\bf s}) : {\bf s}=(s_2,\dots,s_n) \in \Z^{n-1}, {\bf s} \not = (0,\dots,0), \mbox{ and }\sum_{j=2}^n s_j k_j =0\right\}\]
and $H({\bf s})=\max\{|s_j|: 2\leq j \leq n\}$.  
\end{thm}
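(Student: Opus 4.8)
The plan is to recast the identity as an equidistribution statement on $\mathbb{T}^n$ and then separately control the contribution of the locus where $\log|P|$ blows down to $-\infty$. First, since $\m(AB)=\m(A)+\m(B)$ and $\m(A/B)=\m(A)-\m(B)$, and since these identities survive the specialization $x_1\mapsto x,\ x_j\mapsto x^{k_j}$ (the specialized numerator and denominator being nonzero once $q(\mathbf{k})$ is large), it suffices to treat the case $P\in\C[x_1,\dots,x_n]$. Writing $e(\theta):=e^{2\pi i\theta}$, $P_{\mathbf{k}}(z):=P(z,z^{k_2},\dots,z^{k_n})$, and $\gamma_{\mathbf{k}}(\theta):=(e(\theta),e(k_2\theta),\dots,e(k_n\theta))\in\mathbb{T}^n$, and parametrizing the tori by $[0,1)$, one has
\[
\m(P_{\mathbf{k}})=\int_0^1\log\bigl|P(\gamma_{\mathbf{k}}(\theta))\bigr|\,d\theta,\qquad \m(P)=\int_{[0,1)^n}\log\bigl|P(e(\theta_1),\dots,e(\theta_n))\bigr|\,d\theta_1\cdots d\theta_n,
\]
so the theorem asserts that $(\gamma_{\mathbf{k}})_*(\mathrm{Lebesgue})$ tests correctly against $\log|P|$ as $q(\mathbf{k})\to\infty$.

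Decompose $\log|P|=\log^+|P|-\log^-|P|$ with $\log^{\pm}t=\max(\pm\log t,0)$. The function $\log^+|P|=\log\max(1,|P|)$ is continuous on $\mathbb{T}^n$, so by Stone--Weierstrass it is enough to integrate $\gamma_{\mathbf{k}}$ against characters $\chi_{\mathbf{m}}(\mathbf{x})=x_1^{m_1}\cdots x_n^{m_n}$: there $\chi_{\mathbf{m}}(\gamma_{\mathbf{k}}(\theta))=e\bigl((m_1+m_2k_2+\cdots+m_nk_n)\theta\bigr)$, whose integral over $[0,1)$ is $1$ when $m_1+\sum_{j\ge2}m_jk_j=0$ and $0$ otherwise --- matching $\int_{\mathbb{T}^n}\chi_{\mathbf{m}}$ exactly when that linear form does not vanish. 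The hypothesis $q(\mathbf{k})\to\infty$ is precisely what guarantees that for each fixed nonzero $\mathbf{m}$ this exponent is nonzero once $q(\mathbf{k})$ is large enough. Hence $\int_0^1\log^+|P_{\mathbf{k}}(e(\theta))|\,d\theta\to\int_{\mathbb{T}^n}\log^+|P|$.

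For the negative part, fix $M>0$; the truncation $\min(\log^-|P|,M)$ is continuous and bounded, so the previous paragraph gives $\int_0^1\min(\log^-|P_{\mathbf{k}}(e(\theta))|,M)\,d\theta\to\int_{\mathbb{T}^n}\min(\log^-|P|,M)$, while $\int_{\mathbb{T}^n}\min(\log^-|P|,M)\uparrow\int_{\mathbb{T}^n}\log^-|P|<\infty$ by monotone convergence and the local integrability of $\log|P|$ (Proposition~\ref{prop:convergence}). Thus the theorem follows once one establishes \textbf{uniform integrability} of the family $\{\theta\mapsto\log^-|P_{\mathbf{k}}(e(\theta))|\}_{\mathbf{k}}$, that is, $\lim_{M\to\infty}\sup_{\mathbf{k}}\int_{\{\log^-|P_{\mathbf{k}}|>M\}}\log^-|P_{\mathbf{k}}(e(\theta))|\,d\theta=0$. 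This is the crux, and by far the hardest step: everything above is soft, and the entire content of Boyd--Lawton is that the zero set of $P$ on $\mathbb{T}^n$ cannot trap a fixed positive fraction of the mass of the winding curves $\gamma_{\mathbf{k}}$ as $q(\mathbf{k})\to\infty$. Concretely, a dyadic decomposition reduces it to a uniform sublevel-set bound $\bigl|\{\theta:|P_{\mathbf{k}}(e(\theta))|\le\epsilon\}\bigr|\le C\epsilon^{\delta}$ with $C,\delta>0$ independent of $\mathbf{k}$. Since $P\not\equiv0$ it does not vanish identically on $\mathbb{T}^n$, so $\{P=0\}\cap\mathbb{T}^n$ is a proper real-analytic subvariety near which a \L{}ojasiewicz-type inequality gives $|P(\mathbf{x})|\ge c\,\mathrm{dist}(\mathbf{x},\{P=0\})^{\nu}$; one then bounds, uniformly in $\mathbf{k}$, both the number of arcs along which $\gamma_{\mathbf{k}}$ enters an $\epsilon$-tube around $\{P=0\}$ and the length of each such arc, the uniformity coming from the fact that the resultants and discriminants controlling the geometry of $\{P=0\}$ stay nondegenerate under the specialization. (Lawton's original route instead bounds, independently of $\mathbf{k}$, the number of zeros of $P_{\mathbf{k}}$ in a fixed annulus about $|z|=1$.)
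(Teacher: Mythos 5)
The paper does not prove this statement; it is quoted verbatim from Lawton \cite{Lawton}, so there is no internal proof to compare against. Judged on its own merits, your write-up correctly isolates the structure of the argument (an equidistribution statement for the winding curve $\gamma_{\mathbf{k}}$ against the bounded part of $\log|P|$, plus a uniform-integrability estimate to handle the singular negative part), and your reduction from rational functions to polynomials via $\m(A/B)=\m(A)-\m(B)$ is fine once one notes, as you do, that $q(\mathbf{k})\to\infty$ forbids the specialized numerator or denominator from vanishing identically. You are also right that the Weyl criterion handles $\log^{+}|P|$ and any truncation $\min(\log^{-}|P|,M)$.

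The genuine gap is the one you flag yourself: the uniform integrability of $\{\theta\mapsto\log^{-}|P_{\mathbf{k}}(e(\theta))|\}_{\mathbf{k}}$, equivalently the sublevel-set bound $|\{\theta:|P_{\mathbf{k}}(e(\theta))|\le\epsilon\}|\le C\epsilon^{\delta}$ uniformly in $\mathbf{k}$, is not established; it is only gestured at. This is not a detail one can wave at, because it \emph{is} the theorem: the equidistribution paragraph is essentially formal, while Lawton's entire paper consists of proving precisely this uniform control, by bounding, independently of $\mathbf{k}$, the number and location of zeros of $P_{\mathbf{k}}$ in a fixed annulus around $|z|=1$. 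Your alternative \L{}ojasiewicz sketch is plausible in outline, but the two quantities it asks you to control uniformly --- the number of arcs along which $\gamma_{\mathbf{k}}$ meets an $\epsilon$-tube around $\{P=0\}\cap\mathbb{T}^n$, and their lengths --- both blow up as $\|\mathbf{k}\|\to\infty$ individually; the whole difficulty is that their product does not, and nothing in the sketch addresses that trade-off. As it stands the proposal is an accurate roadmap with the central lemma left unproven. A smaller issue: the Weyl step requires that for fixed nonzero $\mathbf{m}$ the exponent $m_1+\sum_{j\ge 2}m_jk_j$ be eventually nonzero, which only follows if $q(\mathbf{k})$ is measured using relations on the full vector $(1,k_2,\dots,k_n)$ (as in Lawton's original statement); with the literal $q$ displayed above, which ignores the first coordinate, one can take $k_2=N$, $k_3=N+1$ to make $q(\mathbf{k})\to\infty$ while $\mathbf{m}=(1,1,-1)$ has exponent zero for every $N$. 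Using Lawton's actual $q$ fixes this, but it is worth being explicit that your soft half depends on it.
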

 Intuitively, the second equation says that the limit is taken while $k_2,\dots,k_n$ go to infinity independently from each other.

Mahler measure often yields special values of interesting number-theoretic functions,  such as 
the Riemann zeta function and $L$-functions associated to arithmetic-geometric objects such as elliptic curves. 
For more on these connections, see~\cite{Bertin-Lalin,BrunaultZudilin}.

\subsection{Arithmetic Dynamics}\label{sec:ArithDynIntro}
A discrete dynamical system is a set $X$ together with a self-map: 
 $f: X \to X$, allowing for iteration. Here we focus on polynomials $f: \C \to \C$. For such an $f$ and for $L \in \Aut(\C)$ (so $L = az + b \in \C[x]$), we write 
\begin{equation}\label{def:conj-notation}
f^n = \underbrace{f\circ f \circ \cdots \circ f}_{\text{$n$-fold composition}},
\quad\text{ and }\quad f^L := L^{-1}\circ f\circ L.
\end{equation}
We will say that $f^L$ and $f$ are affine conjugate over $K$ when $L \in K[x]$. This conjugation is a natural dynamical equivalence relation because it respects iteration: $(f^L)^n = (f^n)^L$.  

A fundamental goal of dynamics is to study the behavior of points of $X$ under iteration. For example, a point $\alpha \in X$ is said to be:
\begin{itemize}
\item {\bf periodic} if $f^n(\alpha) = \alpha$ for some $n>0$,
\item {\bf preperiodic} if $f^n(\alpha) = f^m(\alpha)$ for some $n>m\geq0$, and
\item {\bf wandering} if it is not preperiodic.
\end{itemize}

We write 
\[
\PrePer(f) = \{\alpha \in X : \alpha \text{ is preperiodic under } f\}.
\]

As usual, we say that $\alpha$ is a \textbf{critical} point if $f'(\alpha) = 0$. Critical points play an important role in analyzing the dynamics of the function $f$.

 Questions in arithmetic dynamics are often motivated by an analogy between arithmetic geometry and dynamical systems in which, for example, rational and integral points on varieties correspond to rational and integral points in orbits, and torsion points on abelian varieties correspond to preperiodic points.  It should be no surprise, then, that heights are an essential tool in the study of arithmetic dynamics.
   
 We recall that the classical (logarithmic) height of a rational number $\alpha = \frac{a}{b} \in \Q$, written in lowest terms, is $h(\alpha) = \log\max\{|a|, |b|\}$. This can be extended naturally to a height on algebraic numbers.
 
One way of making such an extension is to consider the na\"ive height. Let $\alpha$ be an algebraic number. We consider its minimal polynomial $P_\alpha(z)=\sum_{j=0}^n a_j z^n$ normalized such that it has integral coefficients  and is primitive. Then 
\[h_{\text{na\"ive}}(\alpha):=\log \max_{j} |a_j|.\]
 
Another possible extension of the classical height is given by the Weil height. For  $\alpha \in K$, with $K$ a number field, the  (absolute logarithmic)  Weil height  is given by 
 \begin{equation}\label{eq:WeilHeight}
 h_{\text{Weil}}(\alpha)=\frac{1}{[K:\Q]}\sum_{\substack{v\in M_K\\v\mid p}} [K_v:\Q_p] \log \max \{||\alpha||_v,1\},
 \end{equation}
 where $M_K$ is an appropriately normalized set of inequivalent absolute values on $K$, so that the product formula is satisfied:
 \[\prod_{\substack{v\in M_K\\v\mid p}} ||x||_v^\frac{[K_v:\Q_p] }{ [K:\Q] }=1.\]
More concretely,  for $K=\Q$ we can take $|\cdot|_\infty$ to be the usual absolute value and $|\cdot|_p$ to be the $p$-adic absolute value, normalized so that $|p|_p=1/p$. Then  for $v \in M_K$ lying over a prime $p$, 
 \[||x||_v=|N_{K_v/\Q_p}(x)|_p^\frac{1}{[K_v:\Q_p]}.\]
 The factor of $[K:\Q]$ in~\eqref{eq:WeilHeight} ensures that $h_{\text{Weil}}(\alpha)$ is well-defined, with the same answer for any field $K$ containing $\alpha$.
 
 While the na\"ive height is very natural to consider, the Weil height is the canonical height for the power map $z \mapsto z^d$. The two heights are commensurate in the sense that  for $\alpha \in \PP^1(\overline{\Q})$, there is a constant $C(d)$ depending only on the degree $d$ of $\alpha$ such that
 \begin{equation}\label{eq:heightequiv}|h_{\text{na\"ive}}(\alpha)-h_{\text{Weil}}(\alpha)|\leq C(d).\end{equation}

 If $f(x) \in K(x)$ is a rational function of degree $d$, then $h\left(f(\alpha)\right)$ should be approximately $d h(\alpha)$. The dynamical canonical height makes this an equality. The definition is reminiscent of the N\'eron--Tate height on an abelian variety, and the proofs of the statements below follow exactly as in this more familiar case.

\begin{defn}
\label{height-definition}
If $f \in \overline\Q(z)$ is a rational map of degree $d$ (i.e.\ the maximum of the degrees of the numerator and denominator is $d$), and $\alpha \in \PP^1(\overline{\Q})$, we then define:
\[
\hat h_f(\alpha) = \lim_{n \to \infty} \frac{h(f^n(\alpha))}{d^n},
\]
where $h$ may be taken as either the na\"ive height $h_{\text{na\"ive}}$ or the Weil height $h_{\text{Weil}}$ by equation \eqref{eq:heightequiv}
\end{defn}

It is known that this limit exists, that 
$\hat h_f(f(\alpha)) = d \hat h_f(\alpha)$, and
that $\hat  h_f(\alpha) = 0$ if and only if $\alpha$ is a preperiodic point for $f$. See Section 3.4 of~\cite{Silverman-arithmetic-dynamical} for details.

\begin{defn}\label{def:Julia}
Let $f\in \C[z]$. The {\bf filled Julia set} of $f$ is
 \[\K_f=\{z\in \C \, :\, f^n(z)\not \rightarrow \infty \mbox{ as } n\rightarrow \infty\}.\]
 The {\bf Julia set} $\J_f$ of $f$ is the boundary of the filled Julia set. That is, $\J_f = \partial\K_f$.
 \end{defn}
 
It follows from these definitions that for a polynomial $f\in \C[z]$, both $\K_f$ and $\J_f$ are compact. 
We denote by $F_\infty$ the complement of $\K_f$ in $\PP^1(\C)$, which is also the attracting basin of $\infty$ for $f$, that is, the set of points in $\PP^1(\C)$ whose orbits go off to $
\infty$.

For example, for $f(z) = z^d$, we see that
$f^n(z) = z^{d^n}$. So
for $d\geq 2$,  we have three cases: 
\begin{itemize}
\item
If $|\alpha| >1$ then $|\alpha^{d^n}|  \to \infty $ with $n$.
\item
If $|\alpha| <1$ then $|\alpha^{d^n}|  \to 0  $ with $n$.
\item
If $|\alpha| =1$ then $|\alpha^{d^n}|  = 1 $ for all $n$.
\end{itemize}
So for  pure power maps, we can understand the Julia sets completely: $\K_f$ is the unit disc, and $\J_f$ is the unit circle. In general, however, these sets are quite complex. (See Figure~\ref{fig:JuliaSets}.)

\begin{figure}[ht!]
\captionsetup[subfigure]{justification=centering}
    \centering
    \begin{subfigure}[t]{0.25\textwidth}
        \centering
        \includegraphics[height=1.2in]{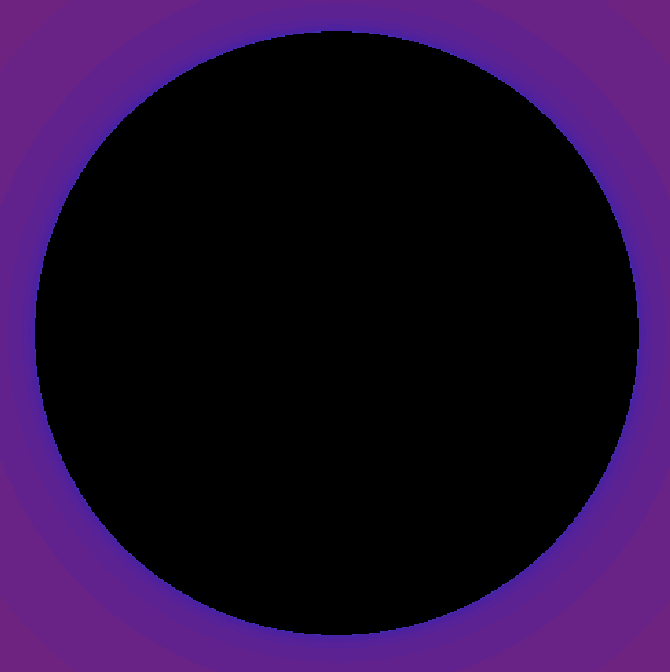}
        \caption{Filled Julia set for \\$f(z) = z^2$}
    \end{subfigure}%
    ~ 
    \begin{subfigure}[t]{0.5\textwidth}
        \centering
        \includegraphics[height=1.2in]{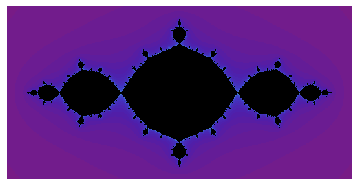}
        \caption{Filled Julia set for \\$f(z) = z^2-1$}
    \end{subfigure}
    ~
        \begin{subfigure}[t]{0.25\textwidth}
        \centering
        \includegraphics[height=1.2in]{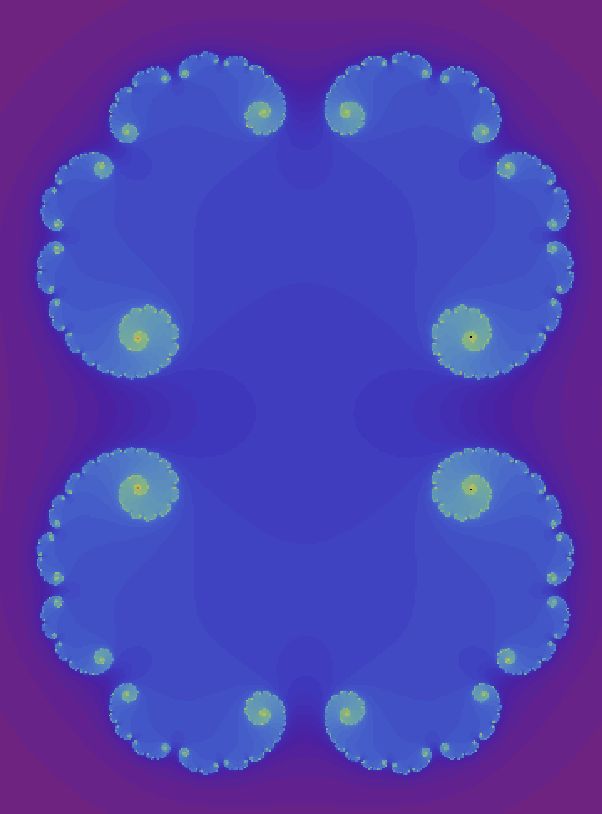}
        \caption{(Filled) Julia set for \\$f(z) = z^2+0.3$}
    \end{subfigure}
\caption{The black area shows the filled Julia set $\K_f$, and its boundary is the Julia set $\J_f$. In the third case, the Julia set has empty interior, so $\K_f = \J_f$.}
    \label{fig:JuliaSets}
\end{figure}

It is clear from Definition~\ref{def:Julia} that $\PrePer(f) \subseteq \K_f$.
When all of the critical points of a polynomial $f$ have unbounded orbits, the Julia set $\J_f$ is totally disconnected, while $\J_f$ is connected if and only if all the critical orbits are bounded~\cite{Fatou, Julia}. A polynomial of degree 2 has a unique critical point, and we have only these two cases (see Figure~\ref{fig:JuliaSets}). This situation is known as the Fatou--Julia dichotomy. However, for higher degree polynomials the situation is complicated by having more than one critical point, and there are cases of polynomials $f$ for which $\J_f$ is disconnected but not totally disconnected (see Figure~\ref{fig:disconnectedJulia}). 

\begin{figure}[h]
\centering
\includegraphics[scale=.5]{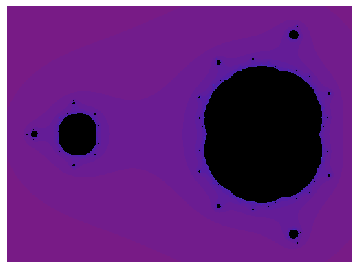}
\caption{The filled Julia set for $f(z) =z^3-z+1$. The Julia set $\J_f$ is disconnected but not totally disconnected.}
\label{fig:disconnectedJulia}
\end{figure}

We saw in equations~\eqref{eq:dynMah1D} and~\eqref{eq:mahlerheight} that  the Julia set $\J_f$ for a  polynomial $f$ will play the role of the unit torus $\TT^1$ when studying dynamical Mahler measure.

\subsection{Equilibrium Measures}

\begin{defn}
Given a compact subset $K \subseteq \C$, an \textbf{equilibrium measure} for $K$ is a Borel probability measure $\mu$ on $K$ which has maximal \textbf{energy}
	\[ I(\mu) := \int_K \int_K \log|z - w|\ d\mu(z)\ d\mu(w)\]
among all Borel probability measures on $K$.
\end{defn}

Every compact set $K \subseteq \mathbb{C}$ has an equilibrium measure~\cite[Theorem 3.3.2]{Ransford}, and if $f$ denotes a polynomial of degree $d \geq 2$, then the equilibrium measure $\mu_f$ on its Julia set $\J_f$ is unique (this is the consequence of a more general result that states that the equilibrium measure of any compact, \emph{non-polar} set is unique~\cite[Theorem 3.7.6]{Ransford}; the Julia set $\J_f$ of any polynomial $f$ is non-polar~\cite[Theorem~6.5.1]{Ransford}), which is to say that there is a non-trivial finite  Borel measure with compact support such that $I(\mu)>-\infty$. 
In fact we can characterize the equilibrium measure $\mu_f$ as follows:

\begin{thm}[{\cite[Theorem 6.5.8]{Ransford}}]
\label{equilibrium-measure-sequence}
Let $w \in \J_f$, and for $n \geq 1$, define the Borel probability measures
	\[ \mu_n := \frac 1{d^n} \sum_{f^n(\zeta) = w} \delta_{\zeta}, \]
where $\delta_{\zeta}$ denotes the unit mass at $\zeta$, and the preimages $\zeta$ of $w$ under $f^n$ are taken with multiplicity. Then $\mu_n \overset{w^\ast}{\to} \mu_f$ (weak$^\ast$-convergence) as $n \to \infty$.
\end{thm}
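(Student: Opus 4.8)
This is essentially Brolin's equidistribution theorem, and the natural route is potential theory. First I would bring in the \emph{escape-rate function} (dynamical Green's function) of $f$,
\[ g_f(z)=\lim_{n\to\infty}\frac{1}{d^n}\log^+\bigl|f^n(z)\bigr|, \]
recalling its standard properties: the limit exists locally uniformly on $\C$, $g_f$ is continuous and subharmonic on $\C$, harmonic on $F_\infty$ and on the interior of $\K_f$, vanishes exactly on $\K_f$, satisfies $g_f\circ f=d\,g_f$, and admits the expansion $g_f(z)=\log|z|-\log\operatorname{cap}(\K_f)+o(1)$ as $z\to\infty$, where $c$ denotes the leading coefficient of $f$ and $\operatorname{cap}(\K_f)=|c|^{-1/(d-1)}$. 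Because $g_f$ is harmonic off $\J_f$, its Riesz measure $\tfrac{1}{2\pi}\Delta g_f$ is a Borel probability measure (total mass $1$ by the asymptotic at $\infty$) supported on $\J_f$, and by Frostman's theorem its logarithmic potential is $g_f+\log\operatorname{cap}(\K_f)$, which is constant quasi-everywhere on $\J_f$; hence $\tfrac{1}{2\pi}\Delta g_f$ is the equilibrium measure $\mu_f$, whose uniqueness was recorded above.

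Next I would compute the logarithmic potentials of the $\mu_n$. Writing $f^n(z)-w=c_n\prod_{f^n(\zeta)=w}(z-\zeta)$, where $c_n=c^{(d^n-1)/(d-1)}$ is the leading coefficient of $f^n$, we get
\[ p_{\mu_n}(z):=\int_{\J_f}\log|z-\zeta|\,d\mu_n(\zeta)=\frac{1}{d^n}\log\bigl|f^n(z)-w\bigr|-\frac{1}{d^n}\log|c_n|. \]
Here $\tfrac{1}{d^n}\log|c_n|\to\tfrac{1}{d-1}\log|c|=-\log\operatorname{cap}(\K_f)$, while $\tfrac{1}{d^n}\log|f^n(z)-w|\to g_f(z)$: on $F_\infty$ this is immediate and locally uniform, since $|f^n(z)-w|$ and $|f^n(z)|$ have the same leading behaviour and $|f^n(z)|\to\infty$; on $\K_f$ the orbit of $z$ is bounded, so $\limsup_n\tfrac{1}{d^n}\log|f^n(z)-w|\le 0=g_f(z)$. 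Thus $p_{\mu_n}\to g_f+\log\operatorname{cap}(\K_f)=p_{\mu_f}$.

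To finish I would promote this to convergence in $L^1_{\mathrm{loc}}(\C)$: the functions $u_n(z):=\tfrac{1}{d^n}\log|f^n(z)-w|$ are subharmonic and locally uniformly bounded above, and they do not degenerate to $-\infty$ since they already converge to the finite function $g_f$ on $F_\infty$, a set of positive measure; the compactness theory of subharmonic functions then yields $u_n\to g_f$ in $L^1_{\mathrm{loc}}$, the limit being forced to be $g_f$ because it must agree with the harmonic function $g_f$ on $F_\infty$. Applying $\tfrac{1}{2\pi}\Delta$, which is continuous for the distributional topology and carries $p_{\mu_n}$ to $\mu_n$ and $p_{\mu_f}$ to $\mu_f$, gives $\mu_n\to\mu_f$ as distributions. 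Since each $\mu_n$ is a probability measure carried by the fixed compact set $\J_f$ (complete invariance $f^{-1}(\J_f)=\J_f$ puts every preimage $\zeta$ of $w$ in $\J_f$), no mass can escape, so the convergence is genuinely weak$^\ast$ and the limit is the probability measure $\mu_f$, independent of the chosen $w\in\J_f$.

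The crux is the $L^1_{\mathrm{loc}}$ step on $\K_f$: one must rule out that $\tfrac{1}{d^n}\log|f^n(z)-w|$ fails to tend to $0$ on a non-negligible subset of $\K_f$ because some orbit approaches $w$ faster than every rate $e^{-cd^n}$. Rather than chasing pointwise convergence there, the clean resolution is to argue at the level of the subharmonic functions $u_n$ as above — equivalently, to use that for a polynomial the only exceptional point for backward orbits is $\infty$, so every finite target, in particular every $w\in\J_f$, equidistributes. Everything else is standard potential theory, and the only input specific to dynamics is the existence and the listed properties of $g_f$.
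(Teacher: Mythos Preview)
The paper does not give its own proof of this theorem; it is simply quoted from Ransford's textbook as a background tool, so there is nothing in the paper to compare your argument against. Your outline is the standard Brolin--Lyubich route via the dynamical Green's function, and most of it is sound.

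There is, however, a genuine gap at the identification step. After extracting an $L^1_{\mathrm{loc}}$ subsequential limit $u$ of the $u_n$, you assert that ``the limit [is] forced to be $g_f$ because it must agree with the harmonic function $g_f$ on $F_\infty$''. Agreement on $F_\infty$ alone does not determine a subharmonic function on all of $\C$, even one whose Riesz measure is carried by $\J_f$: when $\K_f$ has interior, nothing you have said controls $u$ there, and your last paragraph acknowledges this difficulty without resolving it (invoking ``the only exceptional point for backward orbits is $\infty$'' is essentially the conclusion you want). One clean way to close the gap is an energy comparison. Set $\nu=\tfrac{1}{2\pi}\Delta u$, a probability measure on $\J_f$, so that $p_\nu=u+\log\operatorname{cap}(\K_f)=p_{\mu_f}$ on $F_\infty$. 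Since $\J_f=\partial F_\infty$, for each $z_0\in\J_f$ we may approach $z_0$ through $F_\infty$; continuity of $p_{\mu_f}$ and upper semicontinuity of $p_\nu$ then give $p_\nu(z_0)\ge I(\mu_f)$. Integrating over $\J_f$ against $\nu$ yields $I(\nu)\ge I(\mu_f)$, while $\mu_f$ maximizes energy among probability measures on $\J_f$; hence $I(\nu)=I(\mu_f)$ and $\nu=\mu_f$ by uniqueness of the equilibrium measure. This pins down every subsequential limit, so the full sequence $\mu_n$ converges weak$^\ast$ to $\mu_f$ as claimed.
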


\section{Dynamical Mahler Measure: Definition and Examples}
\label{sec:DMMdef}
Inspired by Definition~\ref{defn:MM} and the parallels between Mahler measure and the dynamical setting in equations~\eqref{eq:dynMah1D} and~\eqref{eq:mahlerheight}, we define the following:

\begin{defn} \label{def:dmm-defi}
Let $f \in \Z[z]$ be a monic polynomial of degree $d \geq 2$, and let $P \in \C(x_1, \ldots, x_n)^\times$. The \textbf{$f$-dynamical Mahler measure} of $P$ is the number
	\begin{equation}\label{eq:dmm-defi}
	 \m_f(P) := \int_{\J_f} \cdots \int_{\J_f} \log|P(z_1, \ldots, z_n)|\ d\mu_f(z_1) \cdots d\mu_f(z_n). 
	 \end{equation}
Note that as $\mu_f$ is a probability measure, the value of this integral is not affected by omitted variables, so in this sense the value of $\m_f$ is independent of $n$.
\end{defn}

It is not clear \emph{a priori} that the integral in~\eqref{eq:dmm-defi} converges---it does, and we prove this in Proposition~\ref{prop:convergence}---but before discussing these details we provide some examples where the dynamical Mahler measure can be explicitly computed.  The following lemmas will prove useful throughout. 

\begin{lem}
\label{products}
If $f \in \Z[z]$ and $P, Q \in \C(x_1, \ldots, x_n)^\times$, then \[ \m_f(PQ) = \m_f(P) + \m_f(Q). \]
\end{lem}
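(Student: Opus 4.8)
The statement $\m_f(PQ) = \m_f(P) + \m_f(Q)$ is the dynamical analogue of the elementary additivity of the classical Mahler measure, and the plan is to derive it directly from the defining integral in Definition~\ref{def:dmm-defi} together with the additivity of the logarithm. First I would observe that by the remark in Definition~\ref{def:dmm-defi}, after possibly renaming and introducing dummy variables, we may assume $P$ and $Q$ involve the same set of variables $x_1, \ldots, x_n$, so that $PQ \in \C(x_1,\ldots,x_n)^\times$ as well. Writing $d\mu_f^{(n)} := d\mu_f(z_1)\cdots d\mu_f(z_n)$ for the product measure on $\J_f^n$, the identity $\log|P(z)Q(z)| = \log|P(z)| + \log|Q(z)|$ holds pointwise at every $z = (z_1,\ldots,z_n)$ for which neither $P(z)$ nor $Q(z)$ is $0$ or $\infty$; since $P$ and $Q$ are nonzero rational functions, the locus where this fails is contained in a proper Zariski-closed subset of $\C^n$, which has $\mu_f^{(n)}$-measure zero (each $\mu_f$ is a continuous measure, having no atoms, so a codimension-one subvariety is null for the product). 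Hence the integrands agree $\mu_f^{(n)}$-almost everywhere.

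The remaining point is that one is allowed to split the integral $\int (\log|P| + \log|Q|) = \int \log|P| + \int \log|Q|$, which requires knowing that each of $\log|P|$ and $\log|Q|$ is $\mu_f^{(n)}$-integrable on $\J_f^n$ — this is precisely the convergence asserted in Proposition~\ref{prop:convergence}, which we are free to assume. Given integrability of both pieces, linearity of the Lebesgue integral finishes the argument:
\[
\m_f(PQ) = \int_{\J_f^n} \log|PQ|\, d\mu_f^{(n)} = \int_{\J_f^n} \log|P|\, d\mu_f^{(n)} + \int_{\J_f^n} \log|Q|\, d\mu_f^{(n)} = \m_f(P) + \m_f(Q).
\]

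The only real subtlety — and the step I would be most careful about — is the measure-zero issue: one must confirm that $\mu_f$ has no atoms so that the exceptional set where $\log|P|$ or $\log|Q|$ is $\pm\infty$, or where the logarithm identity breaks, does not interfere. This is standard for the equilibrium measure of the Julia set of a polynomial of degree $d \geq 2$ (it is the unique equilibrium measure of a non-polar compact set, hence continuous), and it can be cited from Ransford or deduced from the equidistribution description in Theorem~\ref{equilibrium-measure-sequence}. Everything else is routine, so the proof will be quite short.
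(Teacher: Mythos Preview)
Your proposal is correct and takes essentially the same approach as the paper: additivity of $\log|\cdot|$ together with linearity of the integral. The paper's proof is a single sentence (``This follows immediately from the corresponding fact about logarithms''), so you have simply spelled out the measure-theoretic details (null sets, integrability via Proposition~\ref{prop:convergence}) that the paper leaves implicit.
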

\begin{proof}
This follows immediately from the corresponding fact about logarithms.
\end{proof}

\begin{lem}
If $f$ and $g$ are nonlinear polynomials that commute under composition, then $\m_f = \m_g$.
\end{lem}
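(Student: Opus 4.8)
The plan is to use the fact that two nonlinear commuting polynomials share the same Julia set and the same equilibrium measure, after which the two dynamical Mahler measures are literally given by the same integral. First I would invoke the classical theory of commuting polynomials (Julia, Fatou, and in sharper form Ritt): if $f$ and $g$ are nonlinear polynomials with $f \circ g = g \circ f$, then $\J_f = \J_g$. The cleanest route to this within the framework already set up in the excerpt is via the canonical height or, equivalently, the filled Julia set: one checks that $f(\K_g) \subseteq \K_g$ and $f^{-1}(\K_g) \subseteq \K_g$ using $f \circ g = g \circ f$, so $\K_f$ and $\K_g$ have the same bounded-orbit characterization; passing to boundaries gives $\J_f = \J_g$. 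Then I would argue that the equilibrium measure of this common Julia set is the same whether we regard it as $\J_f$ or $\J_g$: by the discussion following the definition of equilibrium measure, any polynomial of degree $\geq 2$ has a \emph{unique} equilibrium measure on its Julia set (the Julia set being compact and non-polar), so $\mu_f$ and $\mu_g$ are both equal to that unique equilibrium measure of the set $\J_f = \J_g$. Hence $\mu_f = \mu_g$.

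With $\J_f = \J_g$ and $\mu_f = \mu_g$ in hand, the conclusion is immediate from Definition~\ref{def:dmm-defi}: for any $P \in \C(x_1,\dots,x_n)^\times$,
\[
\m_f(P) = \int_{\J_f}\!\cdots\!\int_{\J_f} \log|P(z_1,\dots,z_n)|\, d\mu_f(z_1)\cdots d\mu_f(z_n) = \int_{\J_g}\!\cdots\!\int_{\J_g} \log|P(z_1,\dots,z_n)|\, d\mu_g(z_1)\cdots d\mu_g(z_n) = \m_g(P),
\]
so $\m_f = \m_g$ as functions on rational functions. (Convergence of the integral is guaranteed by Proposition~\ref{prop:convergence} and is unaffected by which label we put on the measure.)

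The main obstacle is the input fact that commuting nonlinear polynomials have equal Julia sets; everything else is bookkeeping. One must be slightly careful that the hypothesis really is about polynomials commuting under composition (not, say, merely having a common iterate up to conjugacy), and that ``nonlinear'' means degree $\geq 2$ so that the uniqueness-of-equilibrium-measure statement and Theorem~\ref{equilibrium-measure-sequence} apply to both $f$ and $g$. If one prefers a self-contained argument avoiding a citation, the equality $\J_f = \J_g$ can be extracted from the preimage characterization of the equilibrium measure in Theorem~\ref{equilibrium-measure-sequence} together with the identity $(f\circ g)^n = f^n \circ g^n$, but the quickest write-up simply cites the classical commuting-polynomials result.
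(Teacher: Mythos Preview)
Your proposal is correct and follows essentially the same approach as the paper: commuting nonlinear polynomials share a Julia set (the paper cites \cite{AtelaHu}), the equilibrium measure is uniquely determined by that set, and hence the defining integrals for $\m_f$ and $\m_g$ coincide. The paper's proof is a two-line version of what you wrote; your added sketch of why $\J_f=\J_g$ via the filled Julia set is a nice bonus but not required.
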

\begin{proof}
If $f$ and $g$ commute, then they have the same Julia set (see~\cite{AtelaHu}), and the equilibrium measure is determined by this set.
\end{proof}

\begin{prop}
\label{power-function-mahler-measure}
If $f(z) = z^d$ with $d \geq 2$, then $\m_f(P) = \m(P)$ for $P \in \C(x)^\times$.
\end{prop}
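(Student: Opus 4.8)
The plan is to observe that for the power map $f(z)=z^d$ with $d\ge 2$, the Julia set $\J_f$ is exactly the unit circle $\TT^1$, and that the equilibrium measure $\mu_f$ on $\TT^1$ is the normalized arclength (Haar) measure $\frac{1}{2\pi i}\frac{dz}{z}$. Granting these two identifications, Definition~\ref{def:dmm-defi} becomes verbatim Definition~\ref{defn:MM}, so $\m_f(P)=\m(P)$ for every $P\in\C(x)^\times$ (and indeed for multivariate $P$ as well, though the statement is phrased for one variable).

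First I would recall, as already established in the discussion following Definition~\ref{def:Julia}, that for $f(z)=z^d$ one has $f^n(z)=z^{d^n}$, so $\K_f$ is the closed unit disc and hence $\J_f=\partial\K_f=\TT^1$. Second, I would identify $\mu_f$ with normalized arclength measure on $\TT^1$. The cleanest route is to invoke Theorem~\ref{equilibrium-measure-sequence}: taking $w=1\in\J_f$, the measure $\mu_n=\frac{1}{d^n}\sum_{\zeta^{d^n}=1}\delta_\zeta$ is the uniform probability measure on the $d^n$-th roots of unity, and these measures converge weak$^\ast$ to normalized arclength on $\TT^1$ (a standard fact, e.g.\ because the Fourier coefficients $\int z^k\,d\mu_n$ vanish for $0<|k|<d^n$ and equal $1$ for $k=0$, matching those of Haar measure in the limit). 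By uniqueness of the equilibrium measure of the non-polar compact set $\TT^1$, this limit is $\mu_f$. Alternatively, one can verify directly that normalized arclength maximizes the energy $I(\mu)$ among probability measures on $\TT^1$, since $\int\int\log|z-w|\,d\theta(z)\,d\theta(w)=0$ by Jensen's formula~\eqref{eq:Jensen} while any other measure gives a strictly smaller value; either argument suffices.

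With $\J_f=\TT^1$ and $d\mu_f(z)=\frac{1}{2\pi i}\frac{dz}{z}$ in hand, the right-hand side of~\eqref{eq:dmm-defi} for $P\in\C(x)^\times$ is literally $\frac{1}{2\pi i}\int_{\TT^1}\log|P(z)|\,\frac{dz}{z}$, which is $\m(P)$ by Definition~\ref{defn:MM}. Since the convergence of both integrals is guaranteed by Proposition~\ref{prop:convergence}, no further analytic care is needed. The only mild obstacle is pinning down precisely that $\mu_f$ is arclength measure rather than merely asserting it; but this follows immediately from Theorem~\ref{equilibrium-measure-sequence} together with the elementary computation of the limiting distribution of roots of unity, so the proof is short. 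I would present the root-of-unity equidistribution route, as it reuses machinery already set up in the paper.
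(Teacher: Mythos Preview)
Your proposal is correct and follows essentially the same approach as the paper: identify $\J_f=\TT^1$, invoke Theorem~\ref{equilibrium-measure-sequence} to conclude that $\mu_f$ is normalized arclength on the unit circle, and read off $\m_f(P)=\m(P)$ directly from the definitions. The paper's proof is terser (it asserts the identification of $\mu_f$ with uniform measure via Theorem~\ref{equilibrium-measure-sequence} without spelling out the equidistribution of roots of unity), but the argument is the same.
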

\begin{proof}
In this case, we have seen that $\J_f$ is given by the circle  $\TT^1$, and Theorem~\ref{equilibrium-measure-sequence} tells us that the equilibrium measure is the uniform measure on the circle:
\[
\frac{\chi_{\TT^1}dz}{2\pi i z},
\]
where $\chi_{\TT^1}$ is the characteristic function on the unit circle. Taking $z = e^{i \theta}$, we then have
	\[ \m_f(P) = \frac{1}{2\pi} \int_0^{2\pi} \log |P(e^{i\theta})|\ d\theta = \frac{1}{2\pi i} \int_{\TT^1} \log |P(z)| \frac{dz}{z} = \m(P). \qedhere \]
\end{proof}

\begin{prop}
\label{chebyshev-polynomial-mahler-measure}
Define the $d^\text{th}$ Chebyshev polynomial to be the polynomial $T_d(z) \in \Z[z]$ that satisfies
\begin{equation}
T_d(z+z^{-1}) = z^d + z^{-d}.
\label{eqn:chebyshev def}
\end{equation}
Then \[\m_{T_d}(P) = \m(P \circ w)\] for $P \in \C(x)^\times$, where $w(z) = z + z^{-1}$.
\end{prop}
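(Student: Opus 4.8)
The plan is to relate the equilibrium measure $\mu_{T_d}$ on the Julia set $\J_{T_d}$ to the uniform measure on $\TT^1$ via the pushforward under the map $w(z) = z + z^{-1}$. First I would recall the classical fact that the Julia set of the Chebyshev polynomial $T_d$ is the real interval $[-2,2]$: this follows from the defining relation \eqref{eqn:chebyshev def}, since $T_d(w(z)) = w(z^d)$ shows that $T_d$ is (affine) semiconjugate to the power map $z \mapsto z^d$ via $w$, and $w$ maps $\TT^1$ onto $[-2,2]$. In particular $w$ restricted to $\TT^1$ is a $2$-to-$1$ map onto $[-2,2]$ (identifying $z$ with $z^{-1}$), branched over the endpoints $\pm 2$.

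Next I would identify $\mu_{T_d}$ as $w_\ast(\lambda)$, where $\lambda$ is the uniform probability measure on $\TT^1$. The cleanest way is to invoke Theorem~\ref{equilibrium-measure-sequence}: choosing a base point $w_0 \in [-2,2]$ and pulling back under $T_d^n$, the preimages of $w_0$ under $T_d^n$ are exactly the images under $w$ of the $d^n$-th roots (with multiplicity) of the two preimages of $w_0$ under $w$ on $\TT^1$; these roots equidistribute to $\lambda$ by the $f(z)=z^d$ case, so their $w$-images equidistribute to $w_\ast(\lambda)$. Hence $\mu_n \overset{w^\ast}{\to} w_\ast(\lambda)$, and by uniqueness of the equilibrium measure $\mu_{T_d} = w_\ast(\lambda)$. (Alternatively one can cite the known formula that the equilibrium measure of $[-2,2]$ is the arcsine distribution, which is precisely $w_\ast(\lambda)$, but the dynamical argument keeps things self-contained.)

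With this identification, the change-of-variables formula for pushforward measures gives, for any $P \in \C(x)^\times$,
\[
\m_{T_d}(P) = \int_{\J_{T_d}} \log|P(t)|\, d\mu_{T_d}(t) = \int_{\TT^1} \log|P(w(z))|\, d\lambda(z) = \int_{\TT^1} \log|(P\circ w)(z)|\, d\lambda(z) = \m(P \circ w),
\]
where the last equality is Proposition~\ref{power-function-mahler-measure} (or just the definition of classical Mahler measure in one variable as an integral over $\TT^1$ against $\lambda$). One should note that $P \circ w \in \C(x)^\times$ is again a nonzero rational function, so $\m(P\circ w)$ is well-defined, and that the $\log$ is $\mu_{T_d}$-integrable by the convergence result (Proposition~\ref{prop:convergence}) applied on the $T_d$-side, or equivalently because $\log|P\circ w|$ is $\lambda$-integrable on $\TT^1$.

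The main obstacle is establishing $\mu_{T_d} = w_\ast(\lambda)$ rigorously, in particular handling the two branch points $\pm 2$ of $w|_{\TT^1}$ and the multiplicities in the preimage count for $T_d^n$. The semiconjugacy $T_d \circ w = w \circ (z \mapsto z^d)$ makes the preimage bookkeeping essentially mechanical — each point of $[-2,2]$ other than $\pm 2$ has exactly two preimages on $\TT^1$ under $w$, contributing the factor that reconciles $d^n$ preimages of $w_0$ on the interval with $2d^n$ roots of unity times preimages on the circle — but it deserves to be spelled out carefully since the endpoints are where the $2$-to-$1$ behavior degenerates. Everything else is a routine application of results already in the excerpt.
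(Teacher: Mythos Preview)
Your proposal is correct and follows essentially the same approach as the paper: both use the semiconjugacy $T_d \circ w = w \circ (z \mapsto z^d)$ together with Theorem~\ref{equilibrium-measure-sequence} to identify $\mu_{T_d}$ as the pushforward $w_\ast \mu_f$ of the uniform measure on $\TT^1$, and then apply the change-of-variables formula and Proposition~\ref{power-function-mahler-measure}. The paper is terser about the preimage bookkeeping and branch points that you flag, but the argument is the same.
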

\begin{proof}
Note that  $T_d \circ w = w \circ f$, where $f(z) = z^d$ (and note the analogy with Proposition~\ref{prop:conjugation} below). The function $w$ maps the Julia set $\J_f$ onto the Julia set $\J_{T_d}$, so that $\J_{T_d}$ is the segment $[-2, 2]$ (traversed twice as $z$ proceeds around the unit circle). It follows from Theorem~\ref{equilibrium-measure-sequence} that the equilibrium measure on $\J_{T_d}$ is the pushforward $w_\ast \mu_f$ of the equilibrium measure on $\J_f$, so
	\begin{align*}
	\m_f(P) &= \int_{\J_{T_d}} \log |P(z)|\ d\mu_{T_d}(z) \\
	&= \int_{w(\J_f)} \log |P(z)|\ dw_\ast \mu_f(z) \\
	&= \int_{\J_f} \log |P(w(z))|\ d\mu_f(z) \\
	&= \m_f(P \circ w) \\
	&= \m(P \circ w)
	\end{align*}
by Proposition~\ref{power-function-mahler-measure}.
\end{proof}

\begin{remk}
Incidentally, writing $u = z + z^{-1} = e^{i\theta} + e^{-i\theta} = 2 \cos \theta$, we have $du = -2 \sin \theta\ d\theta$, which is to say
	\[ - \frac{du}{\sqrt{4 - u^2}} = d\theta. \]
Thus we can write
	\begin{align*}
	\m_{T_d}(P) &= \frac{1}{\pi} \int_0^{\pi} \log |P(e^{i\theta} + e^{-i\theta})|\ d\theta \\
	&= \frac{1}{\pi} \int_{-2}^2 \log |P(u)| \frac{du}{\sqrt{4 - u^2}},
	\end{align*}
from which we see that the equilibrium measure on the segment $[-2, 2]$ is
	\[ \frac{\chi_{[-2,2]}dz}{\pi \sqrt{4-z^2}}. \]
\end{remk}

More generally, we have:
\begin{prop}\label{prop:gen-cheby}
Let $\alpha, \beta \in \C$, and let  $f(z)=\frac{\beta-\alpha}{4}T_d\left(\frac{4z-2(\alpha+\beta)}{\beta-\alpha}\right)+\frac{\alpha+\beta}{2}$ where $T_d$ is a Chebyshev polynomial defined in equation~\eqref{eqn:chebyshev def} for $d\geq 2$. For $P \in \C[x]$, we have 
\begin{equation}\label{eq:mfchange}
\m_f(P)=\m\left(P\circ \left(\frac{\beta-\alpha}{4}(z+z^{-1})+\frac{\alpha+\beta}{2}
 \right) \right).
 \end{equation}
\end{prop}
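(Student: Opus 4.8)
The plan is to reduce this to Proposition~\ref{chebyshev-polynomial-mahler-measure} via an affine change of variables, exactly as Proposition~\ref{chebyshev-polynomial-mahler-measure} was reduced to Proposition~\ref{power-function-mahler-measure}. Write $L(z) = \frac{\beta-\alpha}{4}z + \frac{\alpha+\beta}{2}$, an affine map, so that by construction $f = L \circ T_d \circ L^{-1} = T_d^{L^{-1}}$ in the conjugation notation of~\eqref{def:conj-notation}. First I would record that affine conjugation transforms Julia sets and equilibrium measures in the expected way: $\J_f = L(\J_{T_d})$, and the equilibrium measure $\mu_f$ is the pushforward $L_\ast \mu_{T_d}$. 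This follows from Theorem~\ref{equilibrium-measure-sequence}: preimages of a base point $w \in \J_f$ under $f^n = L \circ T_d^n \circ L^{-1}$ are exactly the $L$-images of the preimages of $L^{-1}(w)$ under $T_d^n$, counted with the same multiplicities, so the approximating measures $\mu_n$ for $f$ are the $L$-pushforwards of those for $T_d$, and pushforward is continuous for weak$^\ast$ convergence. (One should note $L(\J_{T_d}) = L([-2,2])$ is the segment joining $\alpha$ and $\beta$, though this geometric description is not strictly needed.)

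Given that, the computation is a change-of-variables chain identical in structure to the one in Proposition~\ref{chebyshev-polynomial-mahler-measure}:
\begin{align*}
\m_f(P) &= \int_{\J_f} \log|P(z)|\, d\mu_f(z) = \int_{L(\J_{T_d})} \log|P(z)|\, dL_\ast\mu_{T_d}(z) \\
&= \int_{\J_{T_d}} \log|P(L(z))|\, d\mu_{T_d}(z) = \m_{T_d}(P \circ L) = \m\big((P \circ L) \circ w\big),
\end{align*}
where the last equality is Proposition~\ref{chebyshev-polynomial-mahler-measure} with $w(z) = z + z^{-1}$. Finally I would simplify $L \circ w$: since $L(w(z)) = \frac{\beta-\alpha}{4}(z + z^{-1}) + \frac{\alpha+\beta}{2}$, this is exactly the inner function appearing on the right-hand side of~\eqref{eq:mfchange}, completing the proof.

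The only genuinely substantive point — the ``main obstacle,'' such as it is — is justifying that the equilibrium measure pushes forward correctly under the affine conjugacy; everything else is symbol-pushing. In fact even this is mild, because affine maps are bijective with affine inverse, so the preimage bookkeeping under $f^n$ versus $T_d^n$ is exact (no multiplicity subtleties beyond those already present for $T_d$ itself), and weak$^\ast$ convergence is preserved by pushforward along the continuous map $L$. I would also remark in passing that the stated $f$ has integer coefficients and is monic of degree $d$ only under suitable hypotheses on $\alpha,\beta$ (so that Definition~\ref{def:dmm-defi} literally applies), but since the right-hand side of~\eqref{eq:mfchange} makes sense for arbitrary $\alpha,\beta\in\C$ and the argument above never uses integrality, the identity holds as an identity of convergent integrals regardless; one could phrase the proposition purely analytically to sidestep this.
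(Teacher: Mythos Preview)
Your proof is correct. The paper's own proof, however, takes a different route: it computes directly, identifying $\J_f$ as the segment $[\alpha,\beta]$ with equilibrium measure $\frac{dz}{\pi\sqrt{(z-\alpha)(\beta-z)}}$, then performs the explicit trigonometric substitution $z=\frac{\beta-\alpha}{2}\cos(\pi\theta)+\frac{\alpha+\beta}{2}$ followed by $w=e^{i\theta}$ to arrive at the classical Mahler measure on $\TT^1$. Your approach instead factors the problem through Proposition~\ref{chebyshev-polynomial-mahler-measure} via the affine conjugacy $f=L\circ T_d\circ L^{-1}$, which is cleaner and avoids re-deriving the equilibrium measure on a general segment. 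In fact the paper itself remarks, immediately after its proof, that the result ``can also be viewed as a consequence of Proposition~\ref{chebyshev-polynomial-mahler-measure} about Chebyshev polynomials and the following result on dynamical Mahler measure for conjugate maps'' (Proposition~\ref{prop:conjugation}); your argument is precisely this alternative, with the conjugation step (your pushforward computation from Theorem~\ref{equilibrium-measure-sequence}) being the content of Proposition~\ref{prop:conjugation}. The paper's direct computation buys an explicit description of the equilibrium measure on $[\alpha,\beta]$, which is of independent interest; your approach buys brevity and makes the structural reason for the formula transparent.
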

\begin{proof} A  change of variables shows that the Julia sets of these polynomials are given by 
 $\J_f=[\alpha,\beta]$, where this is to be understood as the  line segment connecting $\alpha$ and $\beta$ in the complex plane. The equilibrium measure is then given by  
\[\frac{\chi_{[\alpha,\beta]}dz}{\pi \sqrt{(z-\alpha)(\beta-z)}},\]
where $\chi_{[\alpha,\beta]}$ is the characteristic function on the segment $[\alpha, \beta]$.
This gives  
\begin{align*}
 \m_f(P) = & \frac{1}{\pi} \int_{\alpha}^\beta \log|P(z)|\frac{dz}{\sqrt{(z-\alpha)(\beta-z)}}.
\end{align*}
Setting $z=\frac{\beta-\alpha}{2}\cos(\pi \theta)+\frac{\alpha+\beta}{2}$ gives 
\begin{align*}
\m_f(P)=&\int_{0}^1 \log\left|P\left(\frac{\beta-\alpha}{2}\cos(\pi \theta)+\frac{\alpha+\beta}{2}\right)\right| d\theta\\=&\frac{1}{2}\int_{-1}^1 \log\left|P\left(\frac{\beta-\alpha}{2}\cos(\pi \theta)+\frac{\alpha+\beta}{2}\right)\right|  d\theta.
 \end{align*}
Substituting $w=e^{i\theta}$, we turn the domain of integration into the unit circle, and we conclude that $\m_f$ is given by \eqref{eq:mfchange}. 
\end{proof}

We provide the details of Proposition~\ref{prop:gen-cheby} because it is so difficult, in general, to calculate dynamical Mahler measure exactly. However, we note that the result can also be viewed as a consequence of Proposition~\ref{chebyshev-polynomial-mahler-measure} about Chebyshev polynomials and the following result on dynamical Mahler measure for conjugate maps.

\begin{prop}\label{prop:conjugation}
Let $f \in \C[z]$ and $P \in \C[z]$, and let $L(z) = az+b \in \C[z]$ with $a\neq 0$. Then
\[
\m_{f^L}(P) = \m_f(P \circ L^{-1}).
\]
\end{prop}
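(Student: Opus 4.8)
The plan is to compute $\m_{f^L}(P)$ directly from Definition~\ref{def:dmm-defi} by understanding how the Julia set and equilibrium measure transform under the affine conjugation $L$. The key geometric fact is that $L$ carries the Julia set of $f^L = L^{-1}\circ f \circ L$ onto the Julia set of $f$: since $(f^L)^n = (f^n)^L = L^{-1}\circ f^n \circ L$, a point $z$ has bounded orbit under $f^L$ if and only if $L(z)$ has bounded orbit under $f$, so $L(\K_{f^L}) = \K_f$ and hence $L(\J_{f^L}) = \J_f$ because $L$ is a homeomorphism of $\PP^1(\C)$ and therefore preserves boundaries. The second ingredient is that the equilibrium measure transforms accordingly: $\mu_f = L_\ast \mu_{f^L}$, equivalently $\mu_{f^L} = (L^{-1})_\ast \mu_f$. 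This follows from Theorem~\ref{equilibrium-measure-sequence}, because for $w \in \J_f$ the preimages of $L^{-1}(w)$ under $(f^L)^n$ are exactly the $L^{-1}$-images of the preimages of $w$ under $f^n$ (with matching multiplicities, as $L$ has degree one), so the approximating measures $\mu_n$ for $f^L$ are the pushforwards under $L^{-1}$ of those for $f$, and weak$^\ast$ limits commute with pushforward by a fixed homeomorphism.

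Granting these two facts, the proof is a change-of-variables computation. Starting from
\[
\m_{f^L}(P) = \int_{\J_{f^L}} \log|P(z)|\, d\mu_{f^L}(z),
\]
I substitute $z = L^{-1}(u)$ with $u$ ranging over $L(\J_{f^L}) = \J_f$, using $\mu_{f^L} = (L^{-1})_\ast \mu_f$ to rewrite the integral against $\mu_f$:
\[
\m_{f^L}(P) = \int_{\J_f} \log|P(L^{-1}(u))|\, d\mu_f(u) = \int_{\J_f} \log|(P\circ L^{-1})(u)|\, d\mu_f(u) = \m_f(P\circ L^{-1}).
\]
One should note that $P \circ L^{-1}$ is again a nonzero polynomial (as $L^{-1}(z) = a^{-1}(z-b)$ is a nonzero affine map), so the right-hand side is a legitimate instance of the dynamical Mahler measure.

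The main obstacle is justifying the transformation of the equilibrium measure cleanly — specifically, verifying that $\mu_{f^L} = (L^{-1})_\ast \mu_f$ rather than taking it for granted. The cleanest route is via Theorem~\ref{equilibrium-measure-sequence}: fix $w \in \J_f$, set $w' = L^{-1}(w) \in \J_{f^L}$, and observe that $\{\zeta : (f^L)^n(\zeta) = w'\} = L^{-1}\big(\{\xi : f^n(\xi) = w\}\big)$ with multiplicities preserved, so the defining sequence $\mu_n^{(f^L)} = d^{-n}\sum_{(f^L)^n(\zeta)=w'}\delta_\zeta$ equals $(L^{-1})_\ast \mu_n^{(f)}$; passing to the weak$^\ast$ limit on both sides (which is valid since $g \mapsto g\circ L^{-1}$ is a bijection of $C(\PP^1(\C))$) gives the claim. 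An alternative is to invoke directly that $f$ and $f^L$ are affine conjugate and cite that equilibrium measures of Julia sets are equivariant under affine conjugacy, but the elementary argument above is self-contained given what the excerpt has already established. Everything else — the identity $L(\J_{f^L}) = \J_f$ and the final substitution — is routine.
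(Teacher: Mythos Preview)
Your proof is correct and follows essentially the same approach as the paper: both establish $\J_{f^L} = L^{-1}(\J_f)$, use Theorem~\ref{equilibrium-measure-sequence} to deduce $\mu_{f^L} = (L^{-1})_\ast \mu_f$ from the corresponding identity for the approximating measures, and finish with the change of variables $z = L^{-1}(u)$. Your version is slightly more detailed in justifying the Julia set relation via the filled Julia set and in noting that weak$^\ast$ limits commute with pushforward by a homeomorphism, but the argument is the same.
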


\begin{proof}
Note first that the Julia sets of $f$ and $f^L$ are related by
	\[ \J_{f^L} = L^{-1}(\J_f). \]
Next, fixing $w \in \J_f$, observe that by Theorem~\ref{equilibrium-measure-sequence}, the measure $\mu_{f^L}$ is the limit in the weak$^\ast$ topology of the sequence of measures
	\[ d^{-n} \sum_{(f^L)^n(\zeta) = w} \delta_\zeta = d^{-n} \sum_{f^n(L(\zeta)) = L(w)} (L^{-1})_\ast \delta_{L(\zeta)},\]
where $d$ denotes the degree of $f$ and $(L^{-1})_\ast \delta_{L(\zeta)}$ denotes the measure defined by $(L^{-1})_\ast \delta_{L(\zeta)}(X) = \delta_{L(\zeta)}(L(X))$. 
But by Theorem~\ref{equilibrium-measure-sequence}, this sequence also has weak$^\ast$-limit $(L^{-1})_\ast \mu_f$. So $\mu_{f^L} = (L^{-1})_\ast \mu_f$. Making the substitution $w = L(z)$, we then have
	\begin{align*}
	\m_{f^L}(P) &= \int_{\J_{f^L}} \log |P(z)|\ d\mu_{f^L}(z) \\
	&= \int_{L^{-1}(\J_f)} \log |P(z)|\ d(L^{-1})_\ast \mu_f(z) \\
	&= \int_{\J_f} \log |P(L^{-1}(w))|\ d\mu_f(w) \\
	&= \m_f(P \circ L^{-1}),
	\end{align*}
as desired.
\end{proof}

\section{Dynamical versions of classical results}\label{sec:PrevPaper}
In this section, we summarize results from~\cite{TwoVarPoly}, focusing on the connections between these results and classical Mahler measure as outlined in Section~\ref{sec:ClassicalMahler}. We provide more detail and refine some of these results in the next sections.

\noindent
{\bf Jensen's formula} gave us an equivalent definition of Mahler measure that extended naturally to higher dimensions:

\[
\m\left( P\right) = \log|a| + \sum_{|\alpha_i|>1} \log |\alpha_i|
= \frac{1}{2\pi i}\int_{\TT^1}  \log \left| P\left(z \right)\right| \frac{dz}{z}.
\]

\noindent
{\bf Dynamical Jensen's formula}~\cite[Lemma 3.1] {TwoVarPoly} plays a similar role, allowing us to prove that the integral in \eqref{eq:dmm-defi} converges, and that when $P\in \Z[x_1,\dots,x_n]$, we have   $\m_f(P)\geq 0$. Here $p_{\mu_f}$ is the potential function (see Definition~\ref{def:DefPotential} and Proposition~\ref{dynamical-jensen}).

\[
\m_f(P) = \log|a| + \sum_{i} p_{\mu_f}(\alpha_i) = \log|a| +  \int_{\K_f} \log|z-w|\ d\mu_f(w).
\]

\bigskip
\noindent
{\bf  Kronecker's Lemma} tells us which integer polynomials have Mahler measure zero:
Let $P \in \Z[x]$. If  $\m(P)=0$, then the roots of $P$ are either zero or roots of unity. Conversely, if $P$ is primitive and its roots  either zero or roots of unity, then $\m(P)=0$. 

\bigskip
\noindent
{\bf Dynamical Kronecker's Lemma}~\cite[Lemmas 1.2 and 4.3]{TwoVarPoly}  answers the same question for dynamical Mahler measure. Recalling the driving analogy of arithmetic dynamics, that preperiodic points are like torsion points in arithmetic geometry, the result feels natural.
\begin{lem}[Dynamical Kronecker's Lemma] \label{lem:dynamicalKronecker} Let $f \in \Z[z]$ be monic 
of degree $d \ge 2$  and let $P \in \Z[x]$. 
Then we have $\m_f(P)=0$ if and only if $P(x) = \pm \prod_i (x-\alpha_i)$ with each $\alpha_i$ a preperiodic point of $f$.
\end{lem}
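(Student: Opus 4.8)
The plan is to leverage the Dynamical Jensen's formula
\[
\m_f(P) = \log|a| + \sum_i p_{\mu_f}(\alpha_i),
\]
which reduces the problem to understanding when each summand contributes zero. First I would record the sign of each term: for $P \in \Z[x]$ primitive we have $\log|a| \geq 0$ and, since $\mu_f$ is the equilibrium measure on the (non-polar, capacity-one after normalizing $f$ monic) Julia set, the potential $p_{\mu_f}(w) = \int_{\K_f}\log|z-w|\,d\mu_f(z)$ is $\geq 0$ on all of $\C$ with equality exactly on $\K_f$ (and $p_{\mu_f} \to \infty$ away from $\K_f$; this is where one uses that a monic polynomial of degree $d$ has Julia set of logarithmic capacity $1$, so the Robin constant vanishes). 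Hence $\m_f(P)=0$ forces $|a|=1$ (so $P$ is monic up to sign) and $p_{\mu_f}(\alpha_i)=0$ for every root, i.e. every root lies in $\K_f$. I then need to upgrade ``$\alpha_i \in \K_f$'' to ``$\alpha_i$ is preperiodic.''

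The key leverage for that upgrade is the Galois-stability of the root set together with the canonical height. Since $P \in \Z[x]$, its roots form a full Galois orbit (a union of Galois orbits if $P$ is reducible), and $\hat h_f$ is Galois-invariant; moreover $\hat h_f(\alpha)=0$ iff $\alpha$ is preperiodic, and $\hat h_f \geq 0$ always. The cleanest route is to show directly that $\m_f(P) = \frac{1}{[\Q(\alpha):\Q]}\sum_{\sigma}\hat h_f(\sigma\alpha)$ when $P$ is the (monic) minimal polynomial of $\alpha$ — this is exactly the dynamical analogue of \eqref{eq:dynMah1D}/\eqref{eq:mahlerheight}, and indeed \eqref{eq:dynMah1D} with $\J_f$ in place of $\K_f$ gives it once one notes $p_{\mu_f}$ vanishes on $\K_f \setminus \J_f$ so integrating over $\J_f$ or $\K_f$ is the same. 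Granting this identity (or citing \cite{PST}/\cite{TwoVarPoly} for it), $\m_f(P)=0$ for an irreducible monic $P$ forces $\hat h_f(\alpha)=0$, hence $\alpha$ and all its conjugates are preperiodic. The general case follows by factoring $P$ over $\Q$ into monic irreducibles (possibly times $\pm 1$ and a power of $x$, noting $0$ is preperiodic since $f(0)$ is an algebraic integer whose orbit stays in $\K_f$ — actually $0 \in \K_f$ and one argues as for any root) and applying Lemma~\ref{products}: each factor has nonnegative $\m_f$, so all must vanish.

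For the converse, suppose $P(x) = \pm\prod_i(x-\alpha_i)$ with each $\alpha_i$ preperiodic. Each $\alpha_i \in \PrePer(f) \subseteq \K_f$, so $p_{\mu_f}(\alpha_i)=0$, and $\log|{\pm}1|=0$, so Dynamical Jensen's formula gives $\m_f(P)=0$ directly — no integrality of $P$ is even needed here beyond what makes the roots algebraic.

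\textbf{Main obstacle.} The crux is justifying the precise relationship between the potential function vanishing, membership in $\K_f$, and preperiodicity — specifically, establishing (or correctly invoking) the height identity $[\Q(\alpha):\Q]\hat h_f(\alpha) = \int_{\J_f}\log|P_\alpha(z)|\,d\mu_f(z)$ so that $\m_f$ of a minimal polynomial detects the canonical height of its root. One must also be careful that $p_{\mu_f} \equiv 0$ on $\K_f$ (not merely $\J_f$), which relies on the maximum principle for the subharmonic potential together with the normalization that $f$ is monic; and one should confirm that $\alpha_i \in \K_f$ already forces preperiodicity is \emph{not} true pointwise — it is only true after using the Galois orbit, so the argument genuinely needs the arithmetic input, not just potential theory.
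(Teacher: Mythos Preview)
The paper does not give its own proof of this lemma; it is stated in the survey Section~\ref{sec:PrevPaper} with a citation to \cite[Lemmas~1.2 and~4.3]{TwoVarPoly}. Your argument is correct and is exactly the proof one would assemble from the ingredients the paper collects: the Dynamical Jensen formula (Proposition~\ref{dynamical-jensen}), the positivity and vanishing locus of the potential (Proposition~\ref{potential-nonnegative}), and the height identity from \cite{PST} quoted as equation~\eqref{eq:dynMah1D}. The logic---$\m_f(P)=0$ forces $|a|=1$ and $p_{\mu_f}(\alpha_i)=0$, hence $\alpha_i\in\K_f$; then factor $P$ into monic $\Z$-irreducibles, each with $\m_f=0$ by nonnegativity and additivity, and apply~\eqref{eq:dynMah1D} to each factor to get $\hat h_f(\alpha)=0$, hence $\alpha$ preperiodic---is sound, and you correctly flag that the potential-theoretic step alone only places the roots in $\K_f$, while the arithmetic height identity (implicitly a full Galois-orbit statement) is what upgrades this to preperiodicity.

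Two small cleanups. You do not need to assume $P$ primitive to get $\log|a|\ge 0$: the leading coefficient $a$ is a nonzero integer, so $|a|\ge 1$ automatically. And the aside about $0$ being preperiodic is muddled and unnecessary---$x$ is just another monic irreducible in $\Z[x]$, handled by the same height argument as every other factor.
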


\bigskip
\noindent
{\bf The Boyd-Lawton Theorem} relates single-variable and multivariate Mahler measure.
For $P \in \C(x_1, \dots, x_n)^\times$,
\[\lim_{k_2 \rightarrow \infty}\dots \lim_{k_n \rightarrow \infty}\m(P(x, x^{k_2}, \dots, x^{k_n})) = \m(P(x_1,\dots, x_n))\]
with  $k_2, \dots, k_n \rightarrow \infty$ independently from each other.

\bigskip
\noindent
{\bf The Weak Dynamical Boyd-Lawton Theorem}~\cite[Proposition 1.3]{TwoVarPoly} provides a partial analogue in the dynamical setting for polynomials in two variables.

\begin{prop}[Weak Dynamical Boyd-Lawton]\label{prop:weakdynamicalBL}
	Let $f \in \Z[z]$ monic 
of degree $d \ge 2$  and let $P \in \C[x , y]$. Then
	\[
	\limsup_{n\to\infty} \m_f(P(x, f^n(x))) \le \m_f(P(x, y)).
	\]
\end{prop}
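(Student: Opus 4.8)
The plan is to exploit the dynamical Jensen's formula together with Theorem~\ref{equilibrium-measure-sequence}, using the fact that the fibers $f^{-n}(w)$ equidistribute toward $\mu_f$. First I would write $P(x,y) = a\prod_i (y - g_i(x))$ by factoring over the algebraic closure of $\C(x)$; more precisely, after clearing a leading term in $y$, express $P(x,y) = a(x)\prod_i (y - \beta_i(x))$ where the $\beta_i$ are algebraic functions of $x$. Then by Lemma~\ref{products} and the one-variable dynamical Jensen's formula (applied in the $y$-variable for fixed $x \in \J_f$), one has
\[
\m_f(P(x, f^n(x))) = \int_{\J_f} \log|a(x)|\, d\mu_f(x) + \sum_i \int_{\J_f} \left( \log|a(x)|^{\text{leading}} + \text{(potential terms)} \right) d\mu_f(x),
\]
but it is cleaner to proceed directly: for fixed $x$, $\log|P(x, f^n(x))|$ is a bounded-below quantity, and integrating over $x \in \J_f$ against $\mu_f$ and then pushing the $f^n(x)$ substitution through, we want to interpret the inner evaluation as sampling $P(x, \cdot)$ at the point $f^n(x)$.

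The key step is the change of perspective: instead of thinking of $y = f^n(x)$ as a function of $x$, use that as $x$ ranges over $\J_f$ with respect to $\mu_f$, since $f_*\mu_f = \mu_f$ (invariance of the equilibrium measure), the pair $(x, f^n(x))$ has a distribution whose second marginal is again $\mu_f$, but crucially the first coordinate is a preimage of the second under $f^n$. Concretely, I would fix the \emph{outer} variable to be $w := f^n(x)$ and write, using invariance,
\[
\m_f(P(x,f^n(x))) = \int_{\J_f} \left( \frac{1}{d^n} \sum_{f^n(x) = w} \log|P(x, w)| \right) d\mu_f(w),
\]
where the preimages are taken with multiplicity. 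Now for each fixed $w$, the inner sum is $\frac{1}{d^n}\sum_{f^n(x)=w}\log|P(x,w)| = \int \log|P(x,w)|\, d\mu_{n,w}(x)$ where $\mu_{n,w} = \frac{1}{d^n}\sum_{f^n(x)=w}\delta_x$ is exactly the measure from Theorem~\ref{equilibrium-measure-sequence}, which converges weak$^\ast$ to $\mu_f$. If $\log|P(\cdot, w)|$ were continuous and bounded, we would get convergence of the inner integral to $\int \log|P(x,w)|\,d\mu_f(x)$, and then integrating in $w$ and applying dominated convergence would yield exactly $\m_f(P(x,y))$ — not just a $\limsup$ inequality but equality. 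The $\limsup$ (rather than a limit) and the inequality direction must therefore come from the failure of $\log|P(\cdot,w)|$ to be bounded: it has logarithmic singularities where $P(x,w) = 0$. Weak$^\ast$ convergence against a function with $-\infty$ singularities is only upper semicontinuous in the right sense — one gets $\limsup_n \int \phi\, d\mu_n \le \int \phi\, d\mu$ when $\phi$ is upper semicontinuous and bounded above, which is precisely the situation for $\phi = \log|P(\cdot,w)|$ (bounded above on the compact set $\J_f$, but possibly $-\infty$ at finitely many points).

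So the main obstacle — and the reason only a one-sided bound is obtained — is handling the logarithmic singularities: I would need the semicontinuity statement that for an upper semicontinuous function $\phi$ bounded above on a compact set and measures $\mu_n \overset{w^\ast}{\to}\mu$, $\limsup_n \int\phi\,d\mu_n \le \int\phi\,d\mu$, applied with $\phi_w = \log|P(\cdot,w)|$; then combine this (for each fixed $w$) with Fatou's lemma / reverse Fatou in the outer $w$-integral, using the uniform upper bound $\log|P(x,w)| \le \log\sup_{\J_f\times\J_f}|P| =: C < \infty$ to justify interchanging $\limsup$ with $\int d\mu_f(w)$. The care points are: (i) verifying the upper bound $C$ is finite, which holds since $P$ is a polynomial and $\J_f$ is compact; (ii) ensuring the weak$^\ast$ convergence $\mu_{n,w}\to\mu_f$ holds for \emph{every} $w\in\J_f$ (it does, by Theorem~\ref{equilibrium-measure-sequence}); and (iii) confirming that for fixed $n$, the rewriting $\m_f(P(x,f^n(x))) = \int_{\J_f}\frac{1}{d^n}\sum_{f^n(x)=w}\log|P(x,w)|\,d\mu_f(w)$ is legitimate, which is again the invariance $ (f^n)_*\mu_f = \mu_f$ together with the fiber decomposition. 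Assembling these gives $\limsup_n \m_f(P(x,f^n(x))) \le \int_{\J_f}\int_{\J_f}\log|P(x,w)|\,d\mu_f(x)\,d\mu_f(w) = \m_f(P(x,y))$, as claimed.
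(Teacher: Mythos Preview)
The paper does not itself prove this proposition; it is quoted from \cite[Proposition~1.3]{TwoVarPoly} as part of the survey in Section~\ref{sec:PrevPaper}, so there is no in-paper proof to compare against. Your argument is the natural equidistribution approach and is essentially correct, with one point of precision worth flagging.

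In your step~(iii), the identity
\[
\int_{\J_f} \log|P(x,f^n(x))|\, d\mu_f(x) \;=\; \int_{\J_f} \frac{1}{d^n}\sum_{f^n(\zeta)=w}\log|P(\zeta,w)|\, d\mu_f(w)
\]
is \emph{not} a consequence of forward invariance $(f^n)_*\mu_f = \mu_f$ together with a ``fiber decomposition''; it requires the stronger \emph{balanced} property $\frac{1}{d^n}(f^n)^*\mu_f = \mu_f$ (equivalently, that $\mu_f$ is fixed by the normalized transfer operator). Forward invariance alone is satisfied by many measures---e.g.\ any point mass at a fixed point---for which the displayed identity fails. The balanced property does hold for the equilibrium measure (this is Brolin's theorem), so the identity is valid; you just need to cite the correct property.

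With that correction, the remaining steps go through as you describe: $\log|P(\cdot,w)|$ is upper semicontinuous and bounded above by $C=\log\sup_{\J_f\times\J_f}|P|<\infty$ on the compact set $\J_f$, so the Portmanteau-type inequality $\limsup_n\int\phi_w\,d\mu_{n,w}\le\int\phi_w\,d\mu_f$ holds for each $w\in\J_f$ by Theorem~\ref{equilibrium-measure-sequence}; and since the inner integrals are uniformly bounded above by $C$, reverse Fatou in the outer $w$-integral gives the claimed $\limsup$ bound.
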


\bigskip
\noindent
{\bf Lehmer's Question}
asks if there are integer polynomials with arbitrarily small Mahler measure, or if the Mahler measure of $P \in \Z[x]$ with $\m_f(P) \neq 0$ is bounded away from zero.

 \bigskip
 \noindent
 {\bf Dynamical Lehmer's Conjecture}~\cite[Conjecture~3.25]{Silverman-arithmetic-dynamical} asks the same question for dynamical Mahler measure.

\begin{conj}[Dynamical Lehmer's Conjecture] \label{conj:dynamicalLehmer}
 There is some $\delta = \delta_f > 0$ such that any  single-variable polynomial $P\in \Z[x]$ with $\m_f(P) > 0$ satisfies $\m_f(P) > \delta$.
\end{conj}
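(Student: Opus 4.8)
The plan is to transport the classical attack on Lehmer's problem through the dynamical height dictionary, being candid that a complete proof is presently out of reach --- the classical case is itself open --- so that the realistic target is a Dobrowolski-type bound together with unconditional resolution for structured families of $f$. The first move is a reduction. By Lemma~\ref{products} the functional $\m_f$ is additive over products, and the Dynamical Jensen formula gives $\m_f(P)=\log|a|+\sum_i p_{\mu_f}(\alpha_i)$ with $p_{\mu_f}\ge 0$ everywhere and $p_{\mu_f}\equiv 0$ on $\K_f$; hence $\m_f(P)\ge\log|a|$, so unless $P$ is monic we already have $\m_f(P)\ge\log 2$. Thus we may take $P$ monic, and after factoring into $\Z[x]$-irreducibles that $P=\pm P_\alpha$ is the minimal polynomial of an algebraic integer $\alpha$; if $\alpha$ is preperiodic for $f$ then $\m_f(P)=0$ by Dynamical Kronecker's Lemma~\ref{lem:dynamicalKronecker}, so assume it is not.

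Next I would translate into canonical heights. Since $\alpha$ is an algebraic integer, formula~\eqref{eq:dynMah1D} (with $\mu_f$ the equilibrium measure on $\J_f$) specializes to $\m_f(P)=[\Q(\alpha):\Q]\,\hat h_f(\alpha)$, so the conjecture is equivalent to the existence of $\delta_f>0$ with $[\Q(\alpha):\Q]\,\hat h_f(\alpha)>\delta_f$ for every non-preperiodic algebraic integer $\alpha$ --- Lehmer's problem for the dynamical canonical height $\hat h_f$. In this form the usual machinery applies: one writes $\hat h_f=\sum_v\lambda_{f,v}$ as a sum of local canonical heights, and since $f\in\Z[z]$ is monic it has good reduction at every prime $p$ outside a finite set $S$, so for $\alpha$ an algebraic integer the finite local heights vanish off $S$ and the size of $\hat h_f(\alpha)$ is carried by the archimedean and $S$-adic contributions.

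The main quantitative input would be a Dobrowolski-type lower bound for $\hat h_f$, proved by an auxiliary-polynomial argument. For a large prime $p\notin S$ the $p$-power Frobenius permutes the Galois conjugates of $\alpha$; since $\alpha$ is not preperiodic the iterates $f^n(\alpha)$ are pairwise distinct, so for suitable $n$ the algebraic integers $f^n(\alpha)$ and their Frobenius twists are distinct yet congruent modulo a prime above $p$. Running this over $p$ and bounding the resulting root coincidences by a resultant/discriminant estimate --- the dynamical analogue of the classical fact that the cyclotomic polynomials $\Phi_n$ and $\Phi_{np}$ share roots modulo $p$ --- should yield $[\Q(\alpha):\Q]\,\hat h_f(\alpha)\gg\bigl(\tfrac{\log\log d}{\log d}\bigr)^{\kappa}$ for non-preperiodic algebraic integers $\alpha$ of degree $d$, where the inputs are functoriality $\hat h_f(f(\alpha))=d\,\hat h_f(\alpha)$, the adelic decomposition of $\hat h_f$, good-reduction estimates for the $\lambda_{f,v}$, and the combinatorial fact that the orbit of a non-preperiodic point cannot collide with its Frobenius twists too often. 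This already settles every bounded-degree case (indeed, Northcott finiteness plus $\hat h_f>0$ on non-preperiodic points does that directly), but it falls short of the conjecture: the right-hand side tends to $0$ with $d$, so a sequence with $[\Q(\alpha_k):\Q]\,\hat h_f(\alpha_k)\to 0$ is not yet excluded.

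The step I expect to be the genuine obstacle is exactly the one that would remove that decay, and it is the one with no good dynamical substitute: classically, the sharp gap rests on the rigidity that a root of unity of order $n$ reduces modulo $p$ to a root of unity of exactly the same order, forcing enormous root-coincidence multiplicities; dynamically, preperiodic points do reduce to preperiodic points, but the size and combinatorial structure of $f$-orbits modulo $p$ is far less controlled, so the spacing and multiplicity estimates that must replace the cyclotomic input are correspondingly weaker. For this reason I expect the auxiliary-polynomial route to be essentially optimal and an explicit --- let alone effective --- value of $\delta_f$ to remain as inaccessible as in the classical problem. Two partial footholds are worth recording: for structured $f$ --- post-critically finite maps, the family $z\mapsto z^d+c$, or Lattès maps --- one can often import the classical or the elliptic Lehmer/Dobrowolski results directly, proving the conjecture or its Dobrowolski weakening in those cases; and any small-height sequence $\alpha_k$ would, by equidistribution of Galois orbits toward $\mu_f$ (Baker--Rumely, Favre--Rivera-Letelier, Chambert-Loir), have its conjugates equidistribute on $\J_f$, a useful consistency check that nevertheless yields no quantitative gap on its own.
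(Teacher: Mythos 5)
The statement you were handed is Conjecture~\ref{conj:dynamicalLehmer}, not a theorem: the paper records it as an open problem (citing \cite[Conjecture~3.25]{Silverman-arithmetic-dynamical}) and offers no proof, exactly as the classical Lehmer problem itself remains open. You recognize this in your opening sentence, so there is nothing in the paper to measure your argument against, and your candid framing---reduce via Dynamical Jensen to monic irreducible $P$, translate through formula~\eqref{eq:dynMah1D} to a lower bound on $[\Q(\alpha):\Q]\,\hat h_f(\alpha)$ for non-preperiodic algebraic integers, and then aim for a Dobrowolski-type gap plus unconditional results for structured families---is a fair account of what a plausible attack would look like.

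The one step I would push back on is the Dobrowolski paragraph, which reads as though a bound of the shape $[\Q(\alpha):\Q]\,\hat h_f(\alpha)\gg(\log\log d/\log d)^{\kappa}$ is within reach by the auxiliary-polynomial method for arbitrary monic $f\in\Z[z]$. It is not: the classical argument leans on the rigid arithmetic of roots of unity (exact order is preserved under reduction mod $p$, and $\Phi_n,\Phi_{np}$ share roots mod $p$), and you yourself identify in the next paragraph that the dynamical substitute---controlling collisions between $f$-orbits and their Frobenius twists modulo $p$---is precisely the missing input, not a known lemma. So the phrase ``should yield'' is carrying the entire weight of the argument; the reduction to canonical heights and the structured-family remarks (PCF, $z^d+c$, Latt\`es) are sound, but the Dobrowolski step is aspirational rather than established, and the conjecture remains open both in the paper and in the literature.
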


\bigskip
\noindent
{\bf Higher dimensional Kronecker's Lemma}  could be stated quite simply, and had a similar feel to the one-dimensional version:

 \bigskip
\begin{thmn}[\ref{thm:HigherdKronecker}]\cite[Theorem 3.10]{Everestward} 
For any primitive polynomial $P\in \Z[x_1,\dots,x_n]$, $\m(P) =0$ if and only if $P$ is the product of a monomial and cyclotomic polynomials evaluated on monomials. 

\end{thmn}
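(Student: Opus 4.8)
\emph{Proof idea.} The plan is to dispatch the ``if'' direction by a short computation and to prove the ``only if'' direction by induction on $n$, with the one-variable Kronecker's Lemma as the base case. For ``if'': by additivity of Mahler measure (Lemma~\ref{products}) it suffices to observe that a monomial (with content $\pm 1$) has absolute value $1$ on $\TT^n$ and hence Mahler measure $0$, and that $\m(\Phi_m(x_1^{v_1}\cdots x_n^{v_n})) = 0$ for every $m\ge 1$ and nonzero $\mathbf{v}\in\Z^n$. The latter holds because the monomial map $\TT^n\to\TT^1$, $\mathbf{x}\mapsto x_1^{v_1}\cdots x_n^{v_n}$, is a surjective homomorphism of compact groups when $\mathbf{v}\neq 0$, so it pushes normalized Haar measure forward to normalized Haar measure; thus $\m(\Phi_m(x^{\mathbf{v}}))=\m(\Phi_m)$, which vanishes by the one-variable Kronecker's Lemma since the roots of $\Phi_m$ lie on the unit circle.

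For ``only if'', suppose $P\in\Z[x_1,\dots,x_n]$ is primitive with $\m(P)=0$ and $n\ge 2$. Factoring $P=\pm\prod_j Q_j$ into irreducible primitive polynomials and using Lemma~\ref{products} together with $\m(R)\ge 0$ for every nonzero $R\in\Z[x_1,\dots,x_n]$ (Proposition~\ref{prop:convergence}), we reduce to the case $P=Q$ irreducible with $\m(Q)=0$. Write $Q=\sum_{j=0}^{d}a_j(x_1,\dots,x_{n-1})\,x_n^{j}$ and, after multiplying by a power of $x_n$, assume $a_0,a_d\neq 0$. For $\mathbf{x}\in\TT^{n-1}$ with $a_d(\mathbf{x})\neq 0$, factor $Q(\mathbf{x},x_n)=a_d(\mathbf{x})\prod_{k=1}^{d}(x_n-\beta_k(\mathbf{x}))$; Jensen's formula in $x_n$ followed by integration over $\mathbf{x}\in\TT^{n-1}$ gives
\[
0=\m(Q)=\m(a_d)+\int_{\TT^{n-1}}\sum_{k=1}^{d}\log^{+}|\beta_k(\mathbf{x})|\,d\mu(\mathbf{x}).
\]
Both terms are nonnegative, hence both vanish, so $|\beta_k(\mathbf{x})|\le 1$ for a.e.\ $\mathbf{x}$. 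Comparing constant terms, $\log|a_0(\mathbf{x})|=\log|a_d(\mathbf{x})|+\sum_k\log|\beta_k(\mathbf{x})|\le\log|a_d(\mathbf{x})|$ a.e.; integrating and using $\m(a_0)\ge 0=\m(a_d)$ forces equality a.e., whence $|\beta_k(\mathbf{x})|=1$ for a.e.\ $\mathbf{x}$ and all $k$, and $\m(a_0)=\m(a_d)=0$. A continuity argument upgrades this to the statement that the zero locus of $Q$ in $\G_m^n(\C)$ meets $\TT^{n-1}\times\C^{\times}$ only inside $\TT^n$ (and likewise in each coordinate, by symmetry); and by the inductive hypothesis $a_0$ and $a_d$ are each $\pm$ a monomial times a product of cyclotomic polynomials in monomials.

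The hard part is the endgame: to conclude from this that $V(Q)$ is a torsion coset of a subtorus of $\G_m^n$, equivalently that $Q=\pm(\text{monomial})\cdot\Phi_m(x^{\mathbf{v}})$. I see two routes. The first invokes the classification of subvarieties of $\G_m^n$ containing a Zariski-dense set of torsion points (the Ihara--Serre--Tate theorem, a special case of Manin--Mumford for tori): the root-location constraint produces enough torsion points on $V(Q)$, and irreducibility then forces $V(Q)$ to be a single torsion coset, which is cut out precisely by a polynomial of the claimed shape. The second keeps the induction self-contained: the fact that every zero of $Q(\mathbf{x},\cdot)$ lies on the unit circle for $\mathbf{x}\in\TT^{n-1}$ says $Q$ is self-inversive in $x_n$ for each such $\mathbf{x}$ (i.e.\ $x_n^{d}\,\overline{Q(\mathbf{x},1/\overline{x_n})}$ equals $Q(\mathbf{x},x_n)$ up to a factor of modulus one), and combining this functional equation with the inductive description of $a_0$ and $a_d$, then tracking exponents, pins down the shape of $Q$. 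Reassembling the irreducible factors and the $\pm 1$ content gives the theorem. This arithmetic input is the crux; the reduction to irreducibles, the iterated Jensen estimate, and the final reassembly are routine.

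A cautionary remark on strategies that do \emph{not} work: one cannot reduce the ``only if'' direction to one variable using the substitutions $x_i\mapsto x^{k_i}$ and the Boyd--Lawton theorem, because restricting $\m$ to a $1$-dimensional subtorus can actually \emph{increase} it (for instance $\m(1+2x)=\log 2>\m(1+x+y)$), and Boyd--Lawton controls only a limit rather than exact vanishing. An alternative to the whole argument is the algebraic-dynamics viewpoint, in which $\m(Q)$ is the entropy of the $\Z^n$-action on the Pontryagin dual of $\Z[x_1^{\pm1},\dots,x_n^{\pm1}]/(Q)$ and zero entropy forces the defining variety onto the torus; this leads back to the same arithmetic core.
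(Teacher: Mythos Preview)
The paper does not prove this statement --- it is quoted without proof from \cite[Theorem 3.10]{Everestward}, both where it first appears in Section~\ref{sec:ClassicalMahler} and where it is restated in Section~\ref{sec:PrevPaper} as classical background for the dynamical analogues. There is therefore no proof in the paper to compare your attempt against.

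That said, your sketch has a genuine gap at precisely the place you flag as ``the hard part.'' Your route~1 does not go through as written: from $|\beta_k(\mathbf{x})|=1$ for a.e.\ $\mathbf{x}\in\TT^{n-1}$ you learn only that the fiber of $V(Q)$ over such $\mathbf{x}$ lies in $\TT^n$. These points have unit-modulus coordinates, not root-of-unity coordinates, so you have produced no torsion points on $V(Q)$ and cannot invoke the Ihara--Serre--Tate classification of torsion cosets. Route~2 is left as a slogan (``tracking exponents, pins down the shape of $Q$''); the self-inversive functional equation you mention is the right object, but deducing from it that an irreducible $Q$ must be $\pm(\text{monomial})\cdot\Phi_m(x^{\mathbf v})$ is exactly the nontrivial content of the theorem, and you have not supplied it. The ``if'' direction, the reduction to irreducibles, and the iterated Jensen estimate are all fine; what remains is the arithmetic core, which your outline identifies but does not resolve.
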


\bigskip
The main result of~\cite{TwoVarPoly} provides a partial two-variable Kronecker's Lemma for dynamical Mahler measure, but the statement and hypotheses are significantly more delicate than in the one-variable case.

 \begin{thm}\cite[Theorem 1.5]{TwoVarPoly} \label{thm:mainresult}
 	Assume the Dynamical Lehmer's Conjecture.
 	
 	Let $f \in \Z[z]$ be a monic polynomial of degree $d \ge 2$ which is not conjugate to $z^d$ or to $\pm T_d(z)$, where $T_d(z)$ is the $d^\text{th}$ Chebyshev polynomial.
 	Then any polynomial $P \in \Z[x, y]$ which is irreducible in $\Z[x,y]$  (but not necessarily irreducible in $\C[x, y]$)
 	with $\m_f(P) = 0$ and which contains both variables $x$ and $y$
 	divides a product of complex polynomials of  the following form:
 	 \[	\tilde{f}^n(x)  - L(\tilde{f}^m(y)),\]
  where $m, n \ge 0$ are integers, $L \in \C[z]$ is a linear polynomial commuting with an iterate of $f$, and $\tilde{f} \in \C[z]$ is a non-linear polynomial of minimal degree commuting with an iterate of $f$ (with possibly different choices of $L$, $\tilde{f}$, $n$, and $m$ for each factor).

  As a partial converse, suppose there exists a product of complex polynomials $F_j$ such that
		\begin{enumerate}
		\item each $F_j$ has the form $\tilde{f}^{n}(x) - L(\tilde{f}^{m}(y))$, where $L$ and $\tilde{f}$ are as above (with possibly different choices of $L$, $\tilde{f}$, $n$, and $m$ for each $j$);
		\item $\prod F_j \in \Z[x, y]$; and
		\item $P$ divides $\prod F_j$ in $\Z[x, y]$.
		\end{enumerate}
	Then $\m_f(P) = 0$.
  
  \end{thm}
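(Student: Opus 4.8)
The plan is to prove the two implications separately, placing essentially all the work in the forward direction; the partial converse will be a short direct computation. For the forward direction, assume $\m_f(P)=0$ with $P\in\Z[x,y]$ irreducible in $\Z[x,y]$ and involving both variables. First I would feed $P$ into the Weak Dynamical Boyd-Lawton Theorem (Proposition~\ref{prop:weakdynamicalBL}), which gives $\limsup_{n\to\infty}\m_f(P(x,f^n(x)))\le\m_f(P)=0$; we may assume $P(x,f^n(x))\not\equiv0$ for every $n$, as otherwise $P=\pm\bigl(y-f^n(x)\bigr)$ and the conclusion is immediate. Since $f\in\Z[z]$ and $P\in\Z[x,y]$, each $P(x,f^n(x))$ lies in $\Z[x]$, so $\m_f(P(x,f^n(x)))\ge0$ (the dynamical Jensen formula makes $\m_f$ nonnegative on integer polynomials), and hence $\m_f(P(x,f^n(x)))\to0$. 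Now I would invoke the Dynamical Lehmer's Conjecture (Conjecture~\ref{conj:dynamicalLehmer}): there is a $\delta_f>0$ with $\m_f(Q)\in\{0\}\cup(\delta_f,\infty)$ for every $Q\in\Z[x]$, so $\m_f(P(x,f^n(x)))=0$ for all large $n$. By the one-variable Dynamical Kronecker's Lemma (Lemma~\ref{lem:dynamicalKronecker}), $P(x,f^n(x))=\pm\prod_i(x-\alpha^{(n)}_i)$ with each $\alpha^{(n)}_i$ an $f$-preperiodic point; since the forward orbit of a preperiodic point is preperiodic, $\bigl(\alpha^{(n)}_i,\,f^n(\alpha^{(n)}_i)\bigr)$ is a preperiodic point of the split map $f\times f$ on $\PP^1\times\PP^1$ lying on the curve $C\colon P(x,y)=0$. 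As $P$ involves $y$, the degrees $\deg_x P(x,f^n(x))$ grow without bound, so this construction yields infinitely many $f\times f$-preperiodic points on $C$; since $P$, being a non-constant irreducible in $\Z[x,y]$, is irreducible over $\Q$, its $\C$-irreducible components are Galois-conjugate, and each therefore carries a Zariski-dense set of such points.

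The remaining step is the classification, and this is the part I expect to be the main obstacle. I would invoke the dynamical Manin-Mumford theorem for split polynomial maps on $\PP^1\times\PP^1$ (Ghioca-Tucker-Zhang; compare Medvedev-Scanlon's classification of invariant subvarieties under coordinatewise polynomial dynamics) to conclude that each $\C$-component of $C$ is a component of a curve $\{g(x)=h(y)\}$ with $g$ and $h$ polynomials each commuting with an iterate of $f$. Then Ritt's classification of commuting polynomials applies: because $f$ is not conjugate to $z^d$ or to $\pm T_d$, the iterates of $f$ have a ``small'' centralizer, so every polynomial commuting with an iterate of $f$ has the shape $L\circ\tilde f^k$ with $L$ linear and commuting with an iterate of $f$, and $\tilde f$ the minimal-degree nonlinear polynomial commuting with an iterate of $f$; rewriting $g(x)=h(y)$ in this way and absorbing the linear parts yields that the component is cut out by $\tilde f^{n}(x)-L(\tilde f^{m}(y))$. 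Collecting the components of $P$ and checking that the Galois action merely permutes the resulting special polynomials, one obtains that $P$ divides a product of polynomials of the stated form. The genuine difficulties here are extracting that precise normal form from the Manin-Mumford and Ritt machinery — in particular, seeing exactly why the $z^d$ and $\pm T_d$ cases must be excluded, their larger centralizers producing extra special curves such as $xy=\zeta$ — and the bookkeeping needed to descend from $\C$-components back to a divisibility statement over $\Z$.

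For the partial converse, suppose $P$ divides $\prod_j F_j$ in $\Z[x,y]$ with $\prod_j F_j\in\Z[x,y]$, each $F_j=\tilde f^{n_j}(x)-L_j(\tilde f^{m_j}(y))$ as in hypothesis~(1). I would first show $\m_f(F_j)=0$ for each $j$. As recalled above, nonlinear polynomials commuting under composition share a Julia set and hence an equilibrium measure, and the equilibrium measure is invariant under its polynomial (Theorem~\ref{equilibrium-measure-sequence}); thus $(\tilde f^{n_j})_\ast\mu_f=(\tilde f^{m_j})_\ast\mu_f=\mu_f$. Pushing the defining integral forward by $\tilde f^{n_j}$ in the first variable and by $\tilde f^{m_j}$ in the second,
\[
\m_f(F_j)=\int_{\J_f}\int_{\J_f}\log|\tilde f^{n_j}(z)-L_j(\tilde f^{m_j}(w))|\,d\mu_f(z)\,d\mu_f(w)=\int_{\J_f}p_{\mu_f}\bigl(L_j(u)\bigr)\,d\mu_f(u).
\]
Since conjugation by the linear $L_j$ fixes an iterate of $f$, it fixes $\J_f$ (Proposition~\ref{prop:conjugation}), so $L_j(\J_f)=\J_f\subseteq\K_f$, and the potential $p_{\mu_f}$ vanishes on $\K_f$; hence $\m_f(F_j)=0$. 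By Lemma~\ref{products}, $\m_f\bigl(\prod_j F_j\bigr)=\sum_j\m_f(F_j)=0$. Writing $\prod_j F_j=P\cdot G$ with $G\in\Z[x,y]$ and applying Lemma~\ref{products} once more gives $0=\m_f(P)+\m_f(G)$; since $P$ and $G$ lie in $\Z[x,y]$ and hence have nonnegative dynamical Mahler measure, we conclude $\m_f(P)=0$.
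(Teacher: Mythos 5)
Your overall architecture is the one the paper itself describes for \cite{TwoVarPoly}: specialize along $y=f^n(x)$ via Proposition~\ref{prop:weakdynamicalBL}, use nonnegativity of $\m_f$ on integer polynomials together with Conjecture~\ref{conj:dynamicalLehmer} to force $\m_f(P(x,f^n(x)))=0$ for all large $n$, apply the one-variable Lemma~\ref{lem:dynamicalKronecker} to produce preperiodic points $(\alpha,f^n(\alpha))$ on $\{P=0\}$, and then hand the passage from Zariski-dense preperiodic points to the factors $\tilde f^{n}(x)-L(\tilde f^{m}(y))$ to the Ghioca--Nguyen--Ye / Medvedev--Scanlon results (Theorems~\ref{GNY-general} and~\ref{thm:MedScan}), exactly as in Figure~\ref{fig:diagram}. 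Your partial converse---$\m_f$ of each factor equals $\int_{\J_f}p_{\mu_f}(L(v))\,d\mu_f(v)=0$ by invariance of $\mu_f$ under the commuting polynomials, then Lemma~\ref{products} plus nonnegativity---is essentially the paper's own argument for (e2) $\Rightarrow$ (a), and is correct.

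The genuine gap is in the sentence ``the degrees $\deg_x P(x,f^n(x))$ grow without bound, so this construction yields infinitely many $f\times f$-preperiodic points on $C$.'' Growing degree gives many roots counted \emph{with multiplicity}, not many distinct roots: a priori the polynomials $P(x,f^n(x))$ could have a bounded set of distinct zeros whose multiplicities blow up with $n$, i.e.\ the curve $\{P=0\}$ could meet the graphs $\{y=f^n(x)\}$ in a fixed finite set of points with ever-higher local intersection multiplicity. This cannot be dismissed out of hand: for instance, when the critical orbit of $f$ is periodic, $(f^n)'$ and higher derivatives vanish to unbounded order along that orbit, so high-order tangency at a fixed preperiodic point is not obviously impossible. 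If the distinct roots stayed bounded you would obtain only finitely many preperiodic points on the curve, Zariski density would fail, and the appeal to dynamical Manin--Mumford would never get off the ground. This is precisely the technical heart of the original proof: \cite{TwoVarPoly} isolates a ``bounded orders property'' controlling these multiplicities (their Proposition 7.6) and then proves, with substantial work, that it holds for every pair $(P,f)$; your proposal needs that argument, or some substitute bound on the multiplicity of each root of $P(x,f^n(x))$ uniform in $n$, to be complete. (A minor point: the split-map dynamical Manin--Mumford theorem you need is due to Ghioca--Nguyen--Ye; Ghioca--Tucker--Zhang is the paper giving counterexamples to, and a refinement of, the general conjecture.)
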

  
  In Section~\ref{sec:DynKronLem}, we discuss several other statements that are either known or conjectured to be equivalent to having dynamical Mahler measure~0, and in Section~\ref{sec:e1e2} we prove a new implication in this family of results. In Section~\ref{sec:ad}, we prove a new version of Theorem~\ref{thm:mainresult} in which we replace the assumption of Dynamical Lehmer's Conjecture with the assumption that $\PrePer(f) \subseteq \J_f$. This is a strengthening of the result in some respects, since the hypothesis on the preperiodic points is much easier to check, when it holds, than Dynamical Lehmer's Conjecture. 
However, there are certainly polynomials $f\in \Z[z]$ for which that assumption does not hold. This is discussed in Section~\ref{sec:J=K}.

\section{Convergence and Positivity}\label{sec:DynMahlerConverge}

In this section we give an introduction to potentials, and then use them to prove the existence of the dynamical Mahler measure.

\begin{defn}\label{def:DefPotential}
The \textbf{potential} of a finite Borel measure $\mu$ with compact support $K$ is the function $p_\mu: \C \to [-\infty, \infty)$ given by
	\[ p_\mu(z) = \int_K \log|z - w|\ d\mu(w). \]
\end{defn}

We can see the relationship between potentials and dynamical Mahler measure in the following result, which should be considered the dynamical analogue of Jensen's formula:

\begin{prop}
\label{dynamical-jensen}
Suppose $P(x)$ factors over $\C$ as $P(x) = a \prod_i (x - \alpha_i)$. Then
	\[ \m_f(P) = \log|a| + \sum_i p_{\mu_f}(\alpha_i). \]
\end{prop}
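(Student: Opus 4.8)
The plan is to unwind both sides into iterated integrals and match them term by term using the linearity of $\log$ on products, reducing everything to the single-variable statement $\m_f(x-\alpha) = p_{\mu_f}(\alpha)$. First I would observe that by Lemma~\ref{products} (applied to the one-variable polynomials $a$ and each $x-\alpha_i$), we have
\[
\m_f(P) = \m_f(a) + \sum_i \m_f(x - \alpha_i).
\]
The constant term contributes $\m_f(a) = \int_{\J_f}\log|a|\,d\mu_f = \log|a|$, since $\mu_f$ is a probability measure. So it suffices to show $\m_f(x-\alpha) = p_{\mu_f}(\alpha)$ for each $\alpha \in \C$. But this is immediate from the definitions: by Definition~\ref{def:dmm-defi} with $n=1$,
\[
\m_f(x - \alpha) = \int_{\J_f} \log|z - \alpha|\, d\mu_f(z),
\]
while Definition~\ref{def:DefPotential} gives $p_{\mu_f}(\alpha) = \int_{\J_f}\log|\alpha - w|\,d\mu_f(w)$ (the support of $\mu_f$ is $\J_f$), and $\log|z-\alpha| = \log|\alpha - z|$, so the two integrals agree. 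Summing over $i$ and adding $\log|a|$ yields the claim.

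The one genuine subtlety — and the step I expect to require the most care — is justifying that all these integrals are finite and that the additive splitting via Lemma~\ref{products} is legitimate even when some $\alpha_i \in \J_f$, so that $\log|z-\alpha_i|$ has a logarithmic singularity on the domain of integration. This is exactly the convergence issue flagged in the text (to be settled in Proposition~\ref{prop:convergence}), and it rests on the fact that $\J_f$ is non-polar, so its equilibrium potential $p_{\mu_f}$ is bounded below and $\int_{\J_f}\log|z-\alpha|\,d\mu_f(z) > -\infty$ for every $\alpha$; the integrand is bounded above since $\J_f$ is compact. Once each summand $\m_f(x-\alpha_i)$ and $\m_f(a)$ is known to be a well-defined real number, the identity $\log|PQ| = \log|P| + \log|Q|$ integrates term by term without obstruction. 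I would either cite Proposition~\ref{prop:convergence} for this finiteness or remark that the non-polarity of $\J_f$ guarantees $p_{\mu_f}(\alpha) > -\infty$ for all $\alpha \in \C$, which is all that is needed here. No deeper input is required: the proposition is essentially a bookkeeping consequence of Definition~\ref{def:dmm-defi}, Definition~\ref{def:DefPotential}, and Lemma~\ref{products}.
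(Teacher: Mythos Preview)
Your proof is correct and matches the paper's approach exactly: expand $\log|P|$ as $\log|a| + \sum_i \log|z-\alpha_i|$ inside the integral and identify each summand as $p_{\mu_f}(\alpha_i)$. One caution on the convergence discussion: citing Proposition~\ref{prop:convergence} here would be circular (its proof invokes this proposition), so stick with your non-polarity argument---though the paper itself simply performs the three-line formal computation without commenting on convergence.
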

\begin{proof}
We have
	\begin{align*}
	\m_f(P) &= \int_{\J_f} \log \left| a \prod_i (z - \alpha_i) \right|\ d\mu_f \\
		&= \log|a| + \sum_i \int_{\J_f} \log|z - \alpha_i|\ d\mu_f \\
		&= \log|a| + \sum_i p_{\mu_f}(\alpha_i). \qedhere
	\end{align*}
\end{proof}

\begin{remk}
If $f$ is a monic polynomial, the potential $p_{\mu_f}$ of the equilibrium measure on its Julia set is equal to the \emph{Green's function} $g_{F_\infty}(z, \infty)$ on $F_\infty$, the complement of the filled Julia set $\K_f$ in the Riemann sphere. (See~\cite[Theorem~6.5.1]{Ransford} and the proof of~\cite[Theorem~4.4.2]{Ransford}.)
\end{remk}

Let us make a few more observations about potentials before returning to dynamical Mahler measure.

\begin{prop}
\label{potential-continuous}
Let $f \in \C[z]$ be a nonlinear polynomial. Then the potential $p_{\mu_f}$ is continuous.
\end{prop}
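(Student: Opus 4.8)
Proof proposal for Proposition \ref{potential-continuous} (the potential $p_{\mu_f}$ of the equilibrium measure on the Julia set of a nonlinear polynomial $f$ is continuous).

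The plan is to exploit the identification of $p_{\mu_f}$ with the Green's function $g_{F_\infty}(z,\infty)$ of the unbounded component $F_\infty$ of the complement of the filled Julia set $\K_f$, as noted in the remark just above. The key structural fact is the functional equation: for a monic polynomial $f$ of degree $d$, one has $p_{\mu_f}(f(z)) = d\, p_{\mu_f}(z)$ for all $z$, which follows from $f_*\mu_f = \mu_f$ together with the factorization $f(z)-w = \prod (z-\zeta_j)$ over the $d$ preimages $\zeta_j$ and Theorem~\ref{equilibrium-measure-sequence} (or directly from the invariance of the equilibrium measure). This lets one reduce questions about $p_{\mu_f}$ near $\J_f$ to questions far from $\J_f$, where regularity is easy.

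First I would recall the two soft properties that hold for the potential of \emph{any} compactly supported finite Borel measure: $p_\mu$ is \emph{upper} semicontinuous everywhere, and it is harmonic (hence real-analytic, hence continuous) on the complement of the support of $\mu$ — here on $\C \setminus \J_f$. So the only possible failure of continuity is at points of $\J_f$, and there it suffices to prove \emph{lower} semicontinuity, i.e.\ that $\liminf_{z\to z_0} p_{\mu_f}(z) \ge p_{\mu_f}(z_0)$ for $z_0 \in \J_f$. On $\K_f$ the Green's function is identically $0$ and $p_{\mu_f} \le 0$ on $\K_f$ with $p_{\mu_f}(z_0)$ attaining a value that, by Frostman's theorem applied to the non-polar set $\J_f$, equals the Robin constant-normalized value $0$ quasi-everywhere on $\J_f$; the content is ruling out the exceptional polar set. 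I would invoke the standard fact (e.g.\ \cite[Theorem 3.1.3, Corollary 3.6.9]{Ransford}) that for a non-polar compact set the equilibrium potential is continuous on all of $\C$ if and only if the set is \emph{regular} for the Dirichlet problem, and then cite the theorem of Fatou that the Julia set of any rational map of degree $\ge 2$ is regular (equivalently, the Green's function $g_{F_\infty}(\cdot,\infty)$ extends continuously by $0$ across $\J_f$); this regularity is exactly \cite[Theorem 6.5.1]{Ransford} combined with the discussion of the Green's function there.

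The main obstacle is precisely the behaviour at the boundary $\J_f$: a priori the equilibrium potential of a non-polar compact set can fail to be continuous at irregular boundary points, so the proposition genuinely uses the special dynamical structure of Julia sets. If I wanted a self-contained argument rather than a citation, I would use the functional equation: given $z_0 \in \J_f$ and $\varepsilon > 0$, iterate to find $n$ with $p_{\mu_f}(w) \le \varepsilon$ failing to be small is impossible because $p_{\mu_f}(f^n(z)) = d^n p_{\mu_f}(z)$ forces $p_{\mu_f}$ to be uniformly small on a neighborhood of $\J_f$ (any fixed positive value of $p_{\mu_f}$ near $\J_f$ would be amplified by $d^n$ on the forward orbit, contradicting the global bound $p_{\mu_f}(z) = \log|z| + O(1)$ near $\infty$ pulled back along $f^{-n}$, which is bounded on compacta). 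Combining this upper bound near $\J_f$ with upper semicontinuity and the value $0$ on $\K_f$ pins down $p_{\mu_f} = 0$ on $\J_f$ and gives continuity there. I expect the cleanest writeup to simply cite Ransford for regularity of Julia sets and deduce continuity of the equilibrium potential immediately.
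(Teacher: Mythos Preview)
Your primary route---upper semicontinuity everywhere, harmonicity off the support $\J_f$, regularity of $F_\infty$ (cited from Ransford) for the boundary behaviour at $\J_f$, and Frostman to pin down the constant value on $\K_f$---is exactly the paper's proof, with the same citations. The paper does not include your alternative functional-equation sketch via $p_{\mu_f}\circ f = d\,p_{\mu_f}$; that is a genuinely different and more dynamical argument, though as written it needs one more line (for instance, noting that for any $M>0$ the sublevel set $\{p_{\mu_f}<M\}$ is an open neighborhood of $\K_f$, so that $f^{-n}$ of it equals $\{p_{\mu_f}<M/d^n\}$ and these shrink to $\K_f$, forcing $p_{\mu_f}\to 0$ at $\J_f$).
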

\begin{proof}
First, the potential is harmonic, and thus continuous, on $\C \setminus \K_f$~\cite[Theorem~3.1.2]{Ransford}.  As $F_\infty$ is a regular domain~\cite[Corollary~6.5.5]{Ransford}, we have $p_{\mu_f}(z) = I(\mu_f)$ for all $z \in \J_f$~\cite[Theorem~4.2.4]{Ransford}, and it is shown in the proof of~\cite[Corollary~6.5.5]{Ransford} that
	\[ \lim_{\substack{z \to \zeta \\ z \notin \K_f}} p_{\mu_f}(z) = I(\mu_f) \]
for all $\zeta \in \J_f$. Finally, it follows from Frostman's Theorem~\cite[Theorem~3.3.4]{Ransford} that $p_{\mu_f}(z) = I(\mu_f)$ on the interior of $\K_f$ also.
\end{proof}

\begin{prop}
\label{potential-nonnegative}
Let $f \in \C[z]$ be a nonlinear, monic polynomial. Then $p_{\mu_f}(z) \geq 0$ for all $z \in \C$, and $p_{\mu_f}(z) = 0$ if and only if $z \in \K_f$.
\end{prop}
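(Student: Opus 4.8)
The plan is to prove Proposition~\ref{potential-nonnegative} by combining the previous proposition on continuity of the potential with the relationship between $p_{\mu_f}$ and the Green's function noted in the remark after Proposition~\ref{dynamical-jensen}, together with the characterization of $\mu_f$ as a weak$^\ast$ limit of preimage measures from Theorem~\ref{equilibrium-measure-sequence}. First I would recall that for a monic polynomial $f$ of degree $d\ge 2$, the potential $p_{\mu_f}$ equals the Green's function $g_{F_\infty}(z,\infty)$ on the complement of $\K_f$; since the Green's function of a domain is nonnegative by definition (or by the maximum principle, as it is harmonic away from $\infty$, has a logarithmic pole at $\infty$, and vanishes on the boundary), we get $p_{\mu_f}(z)\ge 0$ on $F_\infty = \C\setminus\K_f$ and, moreover, $p_{\mu_f}(z) > 0$ there since a nonconstant nonnegative harmonic function cannot attain the value $0$ in the interior of its domain.

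Next I would handle the filled Julia set itself. By Proposition~\ref{potential-continuous}, $p_{\mu_f}$ is continuous on all of $\C$ and takes the constant value $I(\mu_f)$ on $\K_f$ (both on $\J_f$, by regularity of $F_\infty$, and on the interior, by Frostman's theorem). So it remains to show $I(\mu_f) = 0$ for a monic polynomial. One clean way: by continuity, $I(\mu_f) = \lim_{z\to\zeta,\, z\notin\K_f} p_{\mu_f}(z) = \lim g_{F_\infty}(z,\infty) = 0$ since the Green's function vanishes on the (regular) boundary $\J_f$. Alternatively, and perhaps more self-containedly, one can compute directly: for $z$ large, the recursion $g_{F_\infty}(f(z),\infty) = d\, g_{F_\infty}(z,\infty)$ and $g_{F_\infty}(z,\infty) = \log|z| + o(1)$ pin down the normalization and force the boundary value to be $0$; monicity is exactly what makes the leading coefficient contribute nothing. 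Either way, once $I(\mu_f)=0$ we conclude $p_{\mu_f} \equiv 0$ on $\K_f$ and $p_{\mu_f} > 0$ off $\K_f$, which is precisely the claimed dichotomy.

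The main obstacle, I expect, is being careful about \emph{where monicity enters} and not circularly invoking facts that already presuppose the normalization. The cleanest route is to isolate the single input ``$g_{F_\infty}(z,\infty) = \log|z| + O(1/|z|)$ near $\infty$ for monic $f$,'' deduce from it that the potential of $\mu_f$ agrees with this Green's function (using that $p_{\mu_f}(z) = \log|z| + o(1)$ as $z\to\infty$ for \emph{any} probability measure, so the difference is a bounded harmonic function on $F_\infty$ vanishing at the boundary, hence zero by the maximum principle), and then read off both the positivity on $F_\infty$ and the value $0 = I(\mu_f)$ on $\K_f$. A secondary, more minor point is justifying the strict inequality on $F_\infty$: this is the strong maximum principle for the harmonic function $g_{F_\infty}(\cdot,\infty)$, which is positive near $\infty$ and would have to be identically zero if it vanished at an interior point of $F_\infty$ — impossible since it blows up at $\infty$. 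With these pieces in place the proof is short; essentially all the real content has been front-loaded into Proposition~\ref{potential-continuous} and the Green's function remark.
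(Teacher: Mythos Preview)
Your proposal is correct and follows essentially the same approach as the paper: both use Proposition~\ref{potential-continuous} to get $p_{\mu_f}\equiv I(\mu_f)$ on $\K_f$, then invoke $I(\mu_f)=0$ for monic $f$ and strict positivity of the Green's function on $F_\infty$. The only difference is that the paper simply cites \cite[Theorem~6.5.1]{Ransford} and \cite[Theorem~4.4.3]{Ransford} for these two facts, whereas you unpack them via the Green's function identification mentioned in the remark after Proposition~\ref{dynamical-jensen}; your mention of Theorem~\ref{equilibrium-measure-sequence} in the plan is unnecessary since you never actually use it.
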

\begin{proof}
It follows from~\cite[Theorem~6.5.1]{Ransford} that $I(\mu_f) = 0$ if $f$ is monic. The  proof of Proposition~\ref{potential-continuous} then shows that $p_{\mu_f}(z) = 0$ for $z \in \K_f$, while $p_{\mu_f}(z) > 0$ for $z \notin \K_f$~\cite[Theorem~4.4.3]{Ransford}.
\end{proof}

We now return to the dynamical Mahler measure.

\begin{prop}\label{prop:convergence}
Let $f \in \Z[z]$ be a monic, nonlinear polynomial, and let $P \in \C(x_1, \ldots, x_n)^\times$. Then the integral defining the $f$-dynamical Mahler measure of $P$ converges, and if $P$ is furthermore a nonzero integer polynomial, then $\m_f(P) \geq 0$.
\end{prop}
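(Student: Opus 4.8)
The plan is to reduce the multivariate convergence statement to the one-variable case by Fubini-type considerations, and then to handle the one-variable case using the dynamical Jensen formula (Proposition~\ref{dynamical-jensen}) together with the continuity and nonnegativity of the potential $p_{\mu_f}$ established in Propositions~\ref{potential-continuous} and~\ref{potential-nonnegative}. First I would treat the case $P \in \C[x]$: write $P(x) = a\prod_i(x-\alpha_i)$, so that formally $\m_f(P) = \log|a| + \sum_i p_{\mu_f}(\alpha_i)$. Since $p_{\mu_f}$ is continuous on all of $\C$ (Proposition~\ref{potential-continuous}) and $\J_f$ is compact, each integral $\int_{\J_f}\log|z-\alpha_i|\,d\mu_f(z) = p_{\mu_f}(\alpha_i)$ is a finite real number --- the only subtlety being that $\log|z-\alpha_i|$ has a singularity at $z=\alpha_i$, but this is integrable against $\mu_f$ precisely because $I(\mu_f) > -\infty$ (the Julia set is non-polar), so the defining integral converges absolutely. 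Then for a rational function $P = P_1/P_2$ with $P_1, P_2 \in \C[x]$, Lemma~\ref{products} (extended to quotients via $\m_f(1/Q) = -\m_f(Q)$, again immediate from properties of $\log$) gives convergence.

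For the genuinely multivariate case $P \in \C(x_1,\dots,x_n)^\times$, I would argue by induction on $n$. Fixing $z_2,\dots,z_n \in \J_f$ in generic position, $P(x_1, z_2,\dots,z_n)$ is a nonzero rational function of $x_1$, so the inner integral over $z_1$ converges by the one-variable case; the remaining issue is to control the dependence on $(z_2,\dots,z_n)$ uniformly enough to integrate. Here the cleanest route is to bound $\bigl|\log|P(z_1,\dots,z_n)|\bigr|$ by an $L^1(\mu_f^{\otimes n})$ function: since $\J_f$ is compact, $P$ is bounded above on $\J_f^n$ away from its polar locus, and the logarithmic singularities along the zero and pole divisors of $P$ are integrable against the product measure --- this can be seen by slicing and invoking the one-variable integrability of $\log|z - \alpha|$ against $\mu_f$ uniformly in $\alpha$ ranging over a compact set, which follows from continuity of $p_{\mu_f}$. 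Alternatively one can cite the standard fact (as in the classical Mahler measure case, Definition~\ref{defn:MM}) that $\log|P|$ is locally integrable and the product of non-polar compact sets behaves well; I would mirror whatever argument~\cite{TwoVarPoly} uses here, since the proposition asserts these proofs appear there.

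For the positivity statement, assume $P \in \Z[x_1,\dots,x_n]$ is a nonzero integer polynomial. The natural approach is to combine the dynamical Jensen formula with Proposition~\ref{potential-nonnegative}. In the one-variable case this is immediate: $P(x) = a\prod_i(x-\alpha_i)$ with $a \in \Z\setminus\{0\}$, so $\log|a| \ge 0$, and each $p_{\mu_f}(\alpha_i) \ge 0$ by Proposition~\ref{potential-nonnegative} (using that $f$ is monic), whence $\m_f(P) = \log|a| + \sum_i p_{\mu_f}(\alpha_i) \ge 0$. For the multivariate case, I would deduce it from the one-variable case via a specialization or limiting argument: restricting $P$ to a suitable one-parameter family (e.g. setting $x_j = x^{k_j}$ as in the Boyd--Lawton setup, noting $z \mapsto z^{k}$ maps $\J_f$ appropriately only for $f = z^d$, so this does not directly work in general) --- so instead the safer route is to observe directly that $\m_f(P)$ is a limit of Riemann-type sums $\frac{1}{d^{Nn}}\sum \log|P(\zeta_1,\dots,\zeta_n)|$ over preimages $\zeta_i$ of a fixed point under $f^N$, by Theorem~\ref{equilibrium-measure-sequence} applied in each variable; each such sum is $\frac{1}{d^{Nn}}\log\bigl|\prod P(\zeta_1,\dots,\zeta_n)\bigr|$, and the product over all tuples of Galois-conjugate preimages is a nonzero integer (it is the resultant-type expression obtained by taking norms, using that $f^N$ is monic with integer coefficients so its roots are algebraic integers and the product is a rational integer), hence has absolute value $\ge 1$, giving each sum $\ge 0$ and therefore $\m_f(P) \ge 0$ in the limit.

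\textbf{Main obstacle.} The convergence in the multivariate case is the delicate point: one must show the logarithmic singularities of $\log|P|$ along the zero/pole locus of $P$ are integrable against the product measure $\mu_f^{\otimes n}$, which requires more than the one-variable estimate applied slice-by-slice unless one has some uniformity. I expect the cleanest fix is a Fubini argument combined with the uniform integrability coming from continuity of $p_{\mu_f}$ on compact sets, but making the interchange of integrals rigorous (justifying that the iterated integral equals the multiple integral, and that measurability holds) is where the real care is needed. For positivity, the subtlety is that unlike the classical case there is no product formula over places readily available for the Julia set, so the integrality argument must be routed through Theorem~\ref{equilibrium-measure-sequence} and the fact that finite products of values of $P$ at complete Galois orbits of preimages are rational integers of absolute value at least one.
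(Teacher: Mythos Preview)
Your one-variable treatment matches the paper exactly. The divergence is in the multivariate step, where the paper's argument is both simpler and more unified than yours.

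\textbf{Convergence.} The paper does induct on $n$, but rather than bounding $|\log|P||$ by an $L^1$ majorant, it factors $P$ algebraically over $\C(x_2,\dots,x_n)$ as
\[
P(x_1,\dots,x_n) = a_d(x_2,\dots,x_n)\prod_{j=1}^d \bigl(x_1 - g_j(x_2,\dots,x_n)\bigr),
\]
with the $g_j$ algebraic functions. Integrating out $x_1$ via the dynamical Jensen formula gives
\[
\m_f(P) = \m_f(a_d) + \int_{\J_f^{n-1}} \sum_{j} p_{\mu_f}\bigl(g_j(z_2,\dots,z_n)\bigr)\, d\mu_f^{\otimes(n-1)}.
\]
The first term is finite and nonnegative by induction. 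The second integrand is \emph{nonnegative} (Proposition~\ref{potential-nonnegative}) and continuous away from the poles of the $g_j$; since $|P|$ is bounded above on the compact set $\J_f^n$, the integral is bounded above, hence finite. This sidesteps the uniformity and Fubini issues you flagged: one never needs $|\log|P||$ to be integrable, only that the integrand has a sign.

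\textbf{Positivity.} Here you propose a genuinely different argument via Theorem~\ref{equilibrium-measure-sequence} and integrality of resultant-type products. The paper does not do this: positivity falls out of the same induction, since both $\m_f(a_d)\ge 0$ (induction hypothesis, as $a_d\in\Z[x_2,\dots,x_n]$) and the potential term is $\ge 0$. Your route has a real gap: weak-$*$ convergence in Theorem~\ref{equilibrium-measure-sequence} only gives $\int g\,d\mu_N \to \int g\,d\mu_f$ for \emph{continuous bounded} $g$, and $\log|P|$ is neither. Upgrading this to logarithmic integrands is possible but nontrivial (it is essentially the content of quantitative equidistribution results), and you would also need to choose the base point $w$ so that no preimage tuple lands on $\{P=0\}$ while keeping the product a rational integer. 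The paper's approach avoids all of this by exploiting the sign of $p_{\mu_f}$.
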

\begin{remk}
 This result appears in \cite[Proposition 3.2]{TwoVarPoly}. In the interest of providing a self-contained introduction to the key ideas in the subject, we present here an expanded and more detailed proof. The argument is based on the proof of~\cite[Lemma~3.7]{Everestward} for the classical Mahler measure.
\end{remk}

\begin{proof}
It suffices to consider the case of $P$ a polynomial, since $\m_f(F/G) = \m_f(F) - \m_f(G)$ by Lemma~\ref{products}.

We induct on the number of variables. When $n$ = 1, we can factor $P$ over $\C$ as $a \prod_i (x - \alpha_i)$. By Proposition \ref{dynamical-jensen}, we have
	\[ \m_f(P) = \log|a| + \sum_i p_{\mu_f}(\alpha_i). \]
Since the potential $p_{\mu_f}$ is nonnegative on $\C$, we can immediately conclude that the integral defining $\m_f(P)$ converges and that it is nonnegative when $P$ has integer coefficients.

Now assume the result holds for polynomials in $n - 1$ variables, and let $P \in \C[x_1, \ldots, x_n]$. Write $P$ as a polynomial in $x_1$ with coefficients in $\C[x_2, \ldots, x_n]$:
	\[ P(x_1, \ldots, x_n) = a_d(x_2, \ldots, x_n) x_1^d + \dots + a_0(x_2, \ldots, x_n). \]
Factor this as
	\[ a_d(x_2, \ldots, x_n) \prod_{j=1}^d (x_1 - g_j(x_2, \ldots, x_n)) \]
for some algebraic functions $g_j$. We then have
	\begin{align}
\nonumber	\m_f(P) &= \m_f(a_d) + \int_{\J_f} \cdots \int_{\J_f} \log \left| \prod_{j=1}^d (z_1 - g_j(z_2, \ldots, z_n)) \right|\ d\mu_f(z_1) \cdots d\mu_f(z_n) \\
	&= \m_f(a_d) + \int_{\J_f} \cdots \int_{\J_f} \sum_{j=1}^d p_{\mu_f}(g_j(z_2, \ldots, z_n)) \ d\mu_f(z_2) \cdots d\mu_f(z_n). \label{eq:non-neg}
	\end{align}
By the induction hypothesis, $\m_f(a_d)$ exists and is nonnegative if $P$, and thus $a_d$, has integer coefficients. While the $g_j$ may not be continuous, the multiset of values $\{ g_j(z_2, \ldots, z_n) \}$ is, so it follows from Propositions~\ref{potential-continuous} and \ref{potential-nonnegative} that the integrand
	\[ \sum_{j=1}^d p_{\mu_f}(g_j(z_2, \ldots, z_n)) \]
is nonnegative and continuous away from any poles of the $g_j$. On the other hand, as $\J_f$ is compact, the polynomial $P(z_1, \ldots, z_n)$ is bounded above on $\J_f^n$, so the integral defining $\m_f(P)$ is also. The same can then be said for the integral
	\[ \int_{\J_f} \cdots \int_{\J_f} \sum_{j=1}^d p_{\mu_f}(g_j(z_2, \ldots, z_n)) \ d\mu_f(z_2) \cdots d\mu_f(z_n) \]
by the finiteness of $\m_f(a_d)$; it follows that this integral converges, despite the presence of any poles of the $g_j$, and therefore the integral defining $\m_f(P)$ does also.
\end{proof}

\section{Multivariable Analogues of Dynamical Kronecker's Lemma}
\label{sec:DynKronLem}

Multivariate dynamical Mahler measure was  defined in~\cite{TwoVarPoly}, but similar ideas have appeared in the literature in recent years. In this section, we present a summary of some  results in arithmetic dynamics.  These statements, which include the statement that a polynomial has dynamical Mahler measure zero, are all known or conjectured to be equivalent.

Assume as usual that $f \in \Z[x]$ is monic of degree $d$, and $P \in \Z[x_1, \dotsc, x_n]$.  As in the  statement of Theorem~\ref{thm:mainresult}, $L$ always denotes a linear polynomial in $\C[z]$ commuting with an iterate of $f$, and $\tilde{f}$ always denotes a non-linear polynomial in $\C[z]$ of minimal degree commuting with an iterate of $f$.

\begin{itemize}
	\item[(a)] $\m_f(P) =0$.
	\item[(b)] $h(\overline{\{P=0\}}\subseteq X) = 0$, where  $\overline{\{P=0\}}$ is the Zariski closure of the hypersurface $\{P = 0\} \subseteq\A^n \subseteq X$, $X$ is either $(\PP^1)^n$ or $(\PP^n)$, and  $h$ is a dynamical height for subvarieties of $X$ of the type introduced in \cite{Zhang}.
\item [(c)] The hypersurface $\{P = 0\}\subseteq\A^n(\C)$ is preperiodic under the map $(x_1, \dotsc, x_n) \mapsto (f(x_1), \dotsc, f(x_n))$.
(A subvariety $V$ of a variety $X$ is preperiodic for a map $\Phi: X \to X$ if $\Phi^m (V) =  \Phi^n(V)$ for some $m \ne n$.)

\item[(d)] The hypersurface $\{P = 0\}\subseteq\A^n(\C)$ contains a Zariski dense subset of points that are preperiodic for the map $(x_1, \dotsc, x_n) \mapsto (f(x_1), \dotsc, f(x_n))$ (equivalently, with all coordinates preperiodic for $f$).
\item[(e1)] $P$ is primitive (gcd of coefficients $= 1$) and, inside the ring $\C[x_1, \dotsc, x_n]$, $P(x)$ divides some polynomial in $\C[x_1, \dotsc, x_n]$ which is a product of factors of the form $\tilde{f}^n(x_i)  - L(\tilde{f}^m(x_j))$ (here $i$ can equal $j$).
\item[(e2)] Inside the ring $\Z[x_1, \dotsc, x_n]$, $P(x)$ divides some polynomial in $\Z[x_1, \dotsc, x_n]$ which is a product of factors of the form $\tilde{f}^n(x_i)  - L(\tilde{f}^m(x_j))$ (the factors do not need to be in $\Z[x]$, but the product does). 
\end{itemize}
The known relationships among these statements are summarized in Figure~\ref{fig:diagram}.

\begin{figure}[h]
\begin{tikzpicture}
	[node distance = 2cm,
	known/.style={thick, black},
	conditional/.style={thick, blue},
	conjectural/.style={thick, red, dashed}]
\node (a) {(a)};
\node [right=of a] (e2) {(e2)};
\node [below=of a] (d) {(d)};
\node [below=of e2] (e1) {(e1)};
\node [below=of e1] (c) {(c)};
\node [below=of d] (b) {(b)};
\draw [->, known] (e2) -- node[above]{\cite{TwoVarPoly}} (a);
\draw [->, conditional] (a) -- node[left, black, align=right]{\cite{TwoVarPoly}, \\ Theorem~\ref{mahler-preperiodic}} (d);
\draw [->, known] (d) -- node[left]{\cite{Zhang}} (b);
\draw [<->, known] (e2) -- node[right]{Theorem~\ref{e1-implies-e2}} (e1);
\draw [<->, known] (d) -- node[above]{\cite{Ghioca-Nguyen-Ye-two-var}} (e1);
\draw [<->, known] (d) -- node[fill = white]{\cite{Ghioca-Nguyen-Ye-two-var}} (c);
\draw [<->, known] (e1) -- node[right]{\cite{MedvedevScanlon}} (c);
\draw [->, known] ([yshift=1mm] c.west) -- ([yshift=1mm] b.east); 
\draw [->, conjectural] ([yshift=-1mm] b.east) -- node[below, black]{\cite{Zhang}} ([yshift=-1mm] c.west);

\node (k) [right=of e1]
	{
	\begin{tabular}{rl}
	{\tikz [baseline] \draw [->, known] (0,0.5ex) -- (0.5, 0.5ex);} & Known implication \\
	{\tikz [baseline] \draw [->, conditional] (0,0.5ex) -- (0.5, 0.5ex);} & Conditional on some assumptions \\
	{\tikz [baseline] \draw [->, conjectural] (0,0.5ex) -- (0.5, 0.5ex);} & Conjectural \\
	\end{tabular}
	};
\end{tikzpicture}
\caption{The known relationships between statements (a--e), with references.}
\label{fig:diagram}
\end{figure}
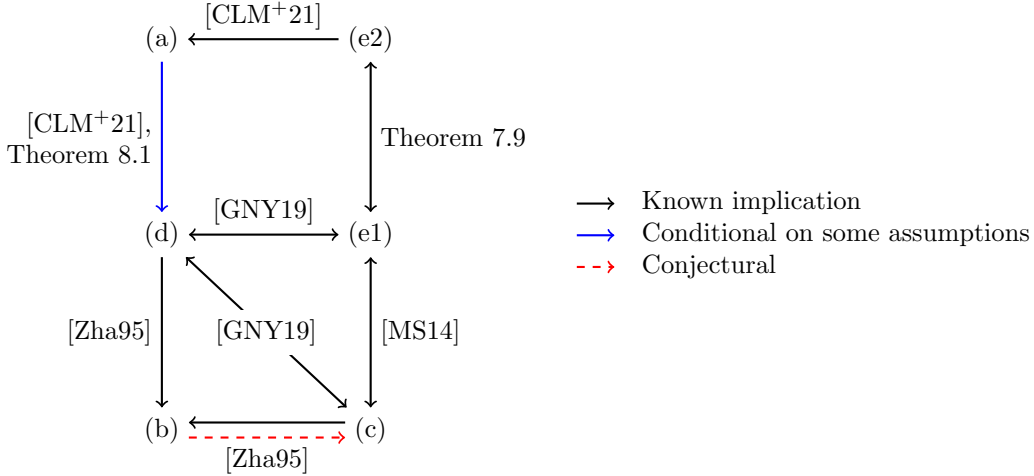
\subsection{Subvarieties with many preperiodic points and preperiodic subvarieties}

Historically, one of the first of these properties to be studied was property (d), as a special case of the following more general question in the field of unlikely intersections:
For an algebraic variety $X$ with a self map $\Phi: X \to X$, which subvarieties $Y$ of $X$ contain a Zariski dense subset of preperiodic points for $\Phi$?

This question was raised by Zhang~\cite{Zhang}, who conjectured that $Y$ has such a subset if and only if $Y$ is preperiodic for $\Phi$.  This conjecture is a generalization of the Manin--Mumford conjecture (proved by Raynaud ~\cite{Raynaud1,Raynaud2}) on subvarieties of abelian varieties containing infinitely many torsion points, and so Zhang in ~\cite{Zhang-distributions} calls this the Dynamical Manin--Mumford Conjecture.

\begin{conj}[Dynamical Manin--Mumford Conjecture] For any variety $X$ and dominant map $\Phi: X \to X$, a subvariety $Y$ of $X$ contains a Zariski dense subset of preperiodic points if and only if $Y$ is preperiodic. 
\end{conj}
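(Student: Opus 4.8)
The plan is to attack the two implications separately, keeping in mind from the outset that the statement as literally phrased is too strong: for a general dominant $\Phi$ it fails, and any genuine proof has to restrict to \emph{polarized} endomorphisms (an ample line bundle $\mathcal L$ on $X$ with $\Phi^{*}\mathcal L\cong\mathcal L^{\otimes q}$ for some $q\ge 2$), which is exactly the setting of the coordinatewise maps on $(\PP^1)^n$ and $\PP^n$ underlying statements (a)--(e2) of this section. Granting a polarization, the \emph{preperiodic $\Rightarrow$ many preperiodic points} direction should be soft. If $\Phi^m(Y)=\Phi^n(Y)$ with $m>n\ge 0$, then since $\Phi$ is a proper (hence closed) morphism, $Z:=\Phi^n(Y)$ is closed, is mapped onto itself by $g:=\Phi^{m-n}$, and $\Phi^n|_Y\colon Y\to Z$ is surjective; the restriction $g|_Z$ is again a polarized endomorphism of $Z$, so by a theorem of Fakhruddin its preperiodic points are Zariski dense in $Z$, and these are precisely the $\Phi$-preperiodic points lying in $Z$. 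A dimension count, using that the fibers of $\Phi^n|_Y$ have generically constant dimension, shows that the preimage of a Zariski-dense subset of $Z$ is Zariski dense in $Y$, and preimages of preperiodic points are preperiodic. This is the multivariable shadow of the one-variable fact, used throughout this survey, that $\PrePer(f)$ is dense in $\J_f$.

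For the deep direction, \emph{many preperiodic points $\Rightarrow$ preperiodic}, I would run the height-theoretic argument that eventually subsumed Raynaud's proof of Manin--Mumford~\cite{Raynaud1,Raynaud2}. Step 1: attach to the polarization the Zhang canonical height $\hat h_{\mathcal L}$ on cycles of $X$~\cite{Zhang}, so that a Zariski-dense set of $\Phi$-preperiodic points in $Y$ provides a Zariski-dense sequence of points of height $0$. Step 2: apply the arithmetic equidistribution theorem for sequences of small points (Szpiro--Ullmo--Zhang, Chambert-Loir, Yuan; and, on Berkovich analytic curves, Baker--Rumely and Favre--Rivera-Letelier): the Galois orbits of such a sequence equidistribute, at every place $v$, toward the canonical measure $\mu_{\Phi,v}$ on $X_v^{\mathrm{an}}$ — the exact higher-dimensional analogue of Theorem~\ref{equilibrium-measure-sequence}. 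Step 3: confront this equidistribution with the functional equation $\Phi^{*}\mu_{\Phi,v}=q\,\mu_{\Phi,v}$ and a rigidity analysis of supports and Green's functions to force $\hat h_{\mathcal L}(Y)=0$. Step 4: deduce that $Y$ is preperiodic from a Northcott-type finiteness — $\hat h_{\mathcal L}(\Phi^{j}Y)$ is essentially constant, the degrees $\deg(\Phi^{j}Y)$ are bounded, and $X$ has only finitely many subvarieties of bounded degree and bounded height, so the orbit $\{\Phi^{j}Y\}_{j\ge 0}$ is finite.

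The main obstacle is Step 3, and it is a genuine one: passing from equidistribution at all places to $\hat h_{\mathcal L}(Y)=0$ is \emph{not} known in general, and indeed Ghioca, Tucker, and Zhang produced subvarieties carrying a Zariski-dense set of preperiodic points that are not themselves preperiodic. So the conjecture above is in fact false as literally stated, and the honest move is to replace it by a polarized version together with a non-degeneracy hypothesis ruling out those examples. Accordingly, a realistic write-up would present (i) the easy direction for polarized $\Phi$ as above; (ii) the equidistribution-plus-Northcott strategy as the core of the hard direction; and (iii) an explicit caveat that the unconditional statement is known to fail, with the strongest positive results being the split-polynomial case on $(\PP^1)^n$ due to Ghioca--Nguyen--Ye~\cite{Ghioca-Nguyen-Ye-two-var} and Medvedev--Scanlon~\cite{MedvedevScanlon} — precisely the inputs feeding the equivalences (c)--(e1) recorded in Figure~\ref{fig:diagram}.
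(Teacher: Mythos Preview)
The statement is a \emph{conjecture}, and the paper offers no proof of it. Immediately after stating it, the paper notes that it ``is now known not to hold in full generality as originally stated (counterexamples, and a refined statement, have been given in \cite{Ghioca-Tucker-Zhang}),'' and then records the special case of Theorem~\ref{GNY-general} as the positive result relevant to the survey. So there is no ``paper's own proof'' to compare against.

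You have already internalized this: your proposal is not a proof but a status report, and it is an accurate one. You correctly flag that the literal statement requires a polarization hypothesis to even get off the ground, that Ghioca--Tucker--Zhang provide counterexamples to the hard direction, and that the strongest unconditional results are the split cases on $(\PP^1)^n$ due to Ghioca--Nguyen--Ye and Medvedev--Scanlon. All of this matches the paper's own discussion in Section~\ref{sec:DynKronLem}.

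Your sketch of the easy direction (preperiodic $\Rightarrow$ Zariski-dense preperiodic points) via Fakhruddin is more detailed than anything the paper provides; the paper simply attributes (c)~$\Rightarrow$~(d) to \cite{Ghioca-Nguyen-Ye-two-var} without elaboration. One small caution: the step ``the preimage of a Zariski-dense subset of $Z$ is Zariski dense in $Y$'' is not automatic from a dimension count alone --- you want that $\Phi^n|_Y$ is dominant (which you have) together with the fact that any proper closed subset of $Y$ maps into a proper closed subset of $Z$, which follows once you know the generic fiber dimension is constant and $Y$ is irreducible. This is routine but worth saying explicitly. Your outline of the hard direction (canonical height, equidistribution, Northcott) is the standard heuristic and you are right that Step~3 is where it breaks.
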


In terms of our diagram, this is saying that (d) $\iff$ (c).  However most of the study of this conjecture has been focused on the implication (d) $\implies$ (c), which has generally been the harder direction.

The Dynamical Manin--Mumford Conjecture has been studied in various contexts, and is now known not to hold in full generality as originally stated (counterexamples, and a refined statement, have been given in \cite{Ghioca-Tucker-Zhang}).
However, in the case of interest for our application, it is known to be true:

\begin{thm}[Ghioca, Nguyen, Ye~\cite{Ghioca-Nguyen-Ye-two-var, Ghioca-Nguyen-Ye-multivar}]\label{GNY-general}
	If $\Phi: (\PP^1)^n \to (\PP^1)^n$ is of the form $f \times \dotsb \times f$, where $f$ is a non-exceptional rational map (not conjugate to a power map, a Chebyshev polynomial, or a Latt\`es map), then the Dynamical Manin--Mumford conjecture holds for the pair $((\PP^1)^n, \Phi).$
\end{thm}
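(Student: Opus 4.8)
The plan is to follow the strategy of Ghioca, Nguyen, and Ye: reduce the statement to a rigidity theorem for curves in $\PP^1\times\PP^1$, and prove that theorem using arithmetic equidistribution together with measure rigidity for non-exceptional rational maps. First I would reduce to curves. If $Y\subseteq(\PP^1)^n$ carries a Zariski-dense set of points preperiodic for $\Phi=f\times\cdots\times f$, then for each pair of coordinates the projection $\pi_{ij}(Y)\subseteq(\PP^1)^2$ again carries a Zariski-dense set of $(f\times f)$-preperiodic points. Assuming the curve case, an induction on $n$ identifies $Y$ with an irreducible component of $\bigcap_{i<j}\pi_{ij}^{-1}(\pi_{ij}(Y))$ in which every $\pi_{ij}(Y)$ is preperiodic, and a combinatorial argument (the content of \cite{Ghioca-Nguyen-Ye-multivar}) then forces $Y$ itself to be preperiodic. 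So it suffices to treat an irreducible curve $C\subseteq\PP^1\times\PP^1$. If one of the two projections $\pi_1,\pi_2\colon C\to\PP^1$ is constant, then $C=\{a\}\times\PP^1$ or $\PP^1\times\{a\}$ with $a\in\PrePer(f)$, which is visibly periodic; so I may assume (after normalizing $C$) that both projections are finite, of degrees $\delta_1,\delta_2\geq 1$.

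Next I would bring in heights and equidistribution. Let $\overline{\mathcal O(1)}_f$ be the canonical adelically metrized line bundle on $\PP^1$ attached to $f$, whose height is $\hat h_f$ and whose $v$-adic curvature is the equilibrium measure $\mu_{f,v}$ at each place $v$ of the number field of definition. Restricting $\overline{L}:=\pi_1^*\overline{\mathcal O(1)}_f$ to $C$, the underlying bundle has positive degree $\delta_1$, the metric is semipositive, and $\widehat{\mathrm{deg}}\bigl(\hat c_1(\overline{L}|_C)^2\bigr)=\delta_1\cdot 0=0$, so the $\overline{L}$-height of $C$ vanishes. The preperiodic points $(a_i,b_i)\in C$ form a Zariski-dense sequence with $\hat h_{\overline{L}}(a_i,b_i)=\hat h_f(a_i)=0$. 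Yuan's arithmetic equidistribution theorem (with Chambert-Loir's non-archimedean version) then forces the Galois-averaged point measures of the $(a_i,b_i)$ to converge weakly, at every place $v$, to $\delta_1^{-1}\,\pi_1^*\mu_{f,v}$ on $C$. Running the identical argument with $\overline{L}':=\pi_2^*\overline{\mathcal O(1)}_f$ identifies the same limiting measure as $\delta_2^{-1}\,\pi_2^*\mu_{f,v}$. Hence $\delta_2\,\pi_1^*\mu_{f,v}=\delta_1\,\pi_2^*\mu_{f,v}$ on $C$ for \emph{every} $v$ simultaneously; equivalently, the algebraic correspondence on $\PP^1$ cut out by $C$ preserves the measure of maximal entropy $\mu_f$ at all places.

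The heart of the argument — and the step I expect to be the main obstacle — is upgrading this measure-theoretic rigidity to the conclusion that $C$ is periodic. At the archimedean place this is a statement about self-correspondences of $\PP^1$ preserving $\mu_{f,\infty}$; here one exploits the fine structure of $\mu_f$ for a \emph{non-exceptional} $f$ (for the excluded power, Chebyshev, and Latt\`es maps, $\mu_f$ admits continuous families of measure-preserving symmetries and flexible families of $\mu_f$-invariant curves, which is precisely why those cases are barred and why \cite{Ghioca-Tucker-Zhang} produces genuine counterexamples there), following the measure-rigidity results of Levin and Zdunik and their refinement in \cite{Ghioca-Nguyen-Ye-two-var}. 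One concludes that the components of $C$ are graphs $y=g(x)$ (or $x=g(y)$) with $g$ a rational map satisfying a commutation relation with an iterate of $f$, possibly pre- and post-composed with iterates of $f$. Feeding in the non-archimedean data — good-reduction and integrality constraints on the number-field model — pins $C$ down to an irreducible component of a curve $\{\tilde f^{\,n}(x)=L(\tilde f^{\,m}(y))\}$ with $L$ linear commuting with an iterate of $f$ and $\tilde f$ a minimal-degree nonlinear commuting polynomial, exactly the shape in Theorem~\ref{thm:mainresult}; the relevant classification of periodic curves for split maps is the analogue of \cite{MedvedevScanlon}.

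Finally, such a curve is periodic: choosing an iterate $f^N$ commuting with both $\tilde f$ and $L$, one checks directly that $(f^N\times f^N)$ preserves the zero set $\{\tilde f^{\,n}(x)=L(\tilde f^{\,m}(y))\}$ and hence permutes its finitely many components, so a further iterate of $\Phi$ fixes $C$, giving $\Phi^{m'}(C)=\Phi^{n'}(C)$ for suitable $m'\neq n'$. This settles the direction (d)$\Rightarrow$(c) for curves, and hence, through the first reduction, for all subvarieties of $(\PP^1)^n$. The reverse implication (c)$\Rightarrow$(d) is the easy one: a periodic curve $C$ for $f\times f$ is a $\PP^1$ on which the first-return map of $\Phi$ is again a non-exceptional rational map, and its preperiodic points — which are $\Phi$-preperiodic as points of $(\PP^1)^n$ — are dense in $C$. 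Together these prove the Dynamical Manin--Mumford conjecture for $((\PP^1)^n,\Phi)$, as claimed.
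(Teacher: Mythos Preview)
The paper does not prove this theorem at all: Theorem~\ref{GNY-general} is stated purely as a citation of external work of Ghioca, Nguyen, and Ye, with no proof or proof sketch given in the present paper. There is therefore nothing in the paper to compare your proposal against.

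That said, your outline is a fair high-level summary of the actual strategy in the cited papers: reduce to curves in $(\PP^1)^2$, apply Yuan--Chambert-Loir equidistribution to force the two pulled-back equilibrium measures to agree on the curve, invoke measure rigidity for non-exceptional $f$ to identify the curve, and then verify periodicity; the passage from $n=2$ to general $n$ is the content of the second cited paper. Two caveats on your sketch. First, the claim in your final paragraph that ``a periodic curve $C$ for $f\times f$ is a $\PP^1$'' is not correct in general---periodic curves need not be rational---so the easy direction (c)$\Rightarrow$(d) should instead be argued via density of preperiodic points for polarized dynamical systems (e.g.\ Fakhruddin), not by exhibiting $C$ as a $\PP^1$ with a self-map. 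Second, the measure-rigidity step you flag as ``the main obstacle'' is genuinely the bulk of the work in the cited papers, and your description (``Levin and Zdunik and their refinement'') understates what is required: the actual argument needs simultaneous control at all places and a delicate analysis specific to the non-exceptional hypothesis, which is not something one can borrow off the shelf. But since the present paper treats the theorem as a black box, none of this is a discrepancy with the paper itself.
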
	
Note that although this theorem is for $(\PP^1)^n$, the result also holds for the restriction to $\A^n$, since $\A^n$ is Zariski dense in $(\PP^1)^n$.
Ghioca, Nguyen, and Ye actually show a more general statement, which includes the case $\Phi = f_1 \times \dotsb \times f_n$ where $f_1, \dotsc, f_n$ are non-exceptional and all of the same degree.
 Dujardin and Favre \cite{Dujardin-Favre} have shown a related result:  that Dynamical Manin--Mumford holds for $(\A^2, \Phi)$, where $\Phi$ is any automorphism of H\'enon type. 

The set of invariant subvarieties for maps $\Phi: \A^n \to \A^n$ of the form $(x_1, \dotsc, x_n) \mapsto (f_1(x_1), \dotsc, f_n(x))$ was first determined by Medvedev and Scanlon~\cite{MedvedevScanlon}, and their work can be  extended to give all preperiodic subvarieties. 
We are only interested in the case where $f_1 = \dotsb = f_n$ and of preperiodic hypersurfaces.  (However, it turns out that $f_1 = \dotsb = f_n$ is the most interesting case, and also that all lower-dimensional preperiodic subvarieties are generated as intersections of preperiodic hypersurfaces.)  We compile the relevant results in the theorem statement below:

\begin{thm}[Medvedev, Scanlon~\cite{MedvedevScanlon}]
\label{thm:MedScan}
		If $\Phi: (\PP^1)^n \to (\PP^1)^n$ is of the form $f \times \dotsb \times f$, where $f$ is a non-exceptional rational map (not conjugate to a power map, a Chebyshev polynomial, or a Latt\`es map), then any hypersurface in $\A^n$ that is preperiodic for $\Phi$ is of the form $\{P = 0\}$ where $P(x)$ divides some polynomial in $\C[x_1, \dotsc, x_n]$ which is a product of factors of the form $\tilde{f}^n(x_i)  - L(\tilde{f}^m(x_j))$.
\end{thm}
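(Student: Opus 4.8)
The plan is to reconstruct the argument of Medvedev and Scanlon~\cite{MedvedevScanlon}, whose engine is the model theory of difference fields, organized around a reduction to ``binary'' conditions followed by the classification of preperiodic plane curves. First I would reduce to irreducible hypersurfaces: if $V=\{P=0\}$ has irreducible components $V_1,\dotsc,V_r$, then since $\Phi=f\times\dotsb\times f$ is finite and dominant it sends each irreducible hypersurface to an irreducible hypersurface, so the equality $\overline{\Phi^a(V)}=\overline{\Phi^b(V)}$ forces $\Phi^{b-a}$ to permute the set $\{\overline{\Phi^a(V_i)}\}$; hence every $V_i$ is preperiodic. Because the conclusion only asks that $P$ \emph{divide} a product of factors $\tilde f^{\ell}(x_i)-L(\tilde f^{m}(x_j))$, it is enough to exhibit, for each $V_i$, a single such factor having $V_i$ as one of its components, and then multiply. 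I would also note at the outset that the set of polynomials commuting with an iterate of $f$ is unchanged if $f$ is replaced by any fixed iterate, so the data ``$\tilde f$'' and ``admissible $L$'' are iteration-stable.

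The heart of the matter is to show that an irreducible preperiodic hypersurface in $\A^n$ depends on at most two of the coordinates, i.e.\ it is either a cylinder $C\times\A^{n-2}$ over a preperiodic plane curve $C\subseteq\A^2_{x_i,x_j}$ or a fibre $\{x_i=c\}$ with $c$ a preperiodic point of $f$. This is exactly where non-exceptionality is essential. Passing to a model of ACFA in which the difference operator $\sigma$ realizes $f$, the preperiodic subvarieties of $(\A^1)^n$ are precisely the $\sigma$-closed sets built from $(\A^1,f)$, and the Chatzidakis--Hrushovski trichotomy, together with the identification of the three special cases (power maps, Chebyshev polynomials, Latt\`es maps), shows that for non-exceptional $f$ the generic type of $(\A^1,f)$ is of trivial (disintegrated) type. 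Disintegratedness forces every $\sigma$-invariant subvariety of $(\A^1)^n$ to be, component by component, an intersection of pullbacks of $\sigma$-invariant subvarieties of single factors and of pairs of factors; an irreducible variety of codimension one can satisfy only one such condition, which gives the claim. (For the excluded maps this genuinely fails: $\{x_1x_2=x_3\}$ is invariant for $g\times g\times g$ with $g(z)=z^d$.)

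It then remains to classify the irreducible preperiodic plane curves $C\subseteq\A^2$ for $f\times f$. If $C$ fails to dominate one of the factors it is a fibre $\{x_i=c\}$ with $c$ preperiodic, say $f^{a}(c)=f^{b}(c)$ with $a>b$; using the Ritt--Julia description of the centralizer of a non-exceptional polynomial --- every polynomial commuting with an iterate of $f$ has the form $\ell\circ\tilde f^{k}$ with $\ell$ linear and commuting with an iterate of $f$ --- write $f^{a}=\ell_a\circ\tilde f^{\alpha}$ and $f^{b}=\ell_b\circ\tilde f^{\beta}$, so that $c$ is a root of $\tilde f^{\alpha}(x_i)-(\ell_b^{-1}\ell_a)(\tilde f^{\beta}(x_i))$, which is of the required form with $j=i$. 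If instead $C$ dominates both factors, then preperiodicity of $C$ under $f\times f$ gives, on function fields, two embeddings of $\C(t)$ into $\C(C)$ intertwined by iterates of $f$; Ritt's first and second theorems on polynomial decomposition, combined with the same centralizer description, force $C$ to be a component of $\{\tilde f^{\ell}(x)-L(\tilde f^{m}(y))=0\}$ for suitable $\ell,m\geq 0$ and linear $L$ commuting with an iterate of $f$. Taking cylinders over these curves and multiplying the factors obtained from all components of $V$ produces a polynomial in $\C[x_1,\dotsc,x_n]$ of the stated product shape that is divisible by $P$.

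I expect the binary-reduction step to be the main obstacle: it is precisely the point where the model theory of difference fields does the real work, encoding the fact that non-exceptional one-variable polynomial dynamics is orthogonal to the linear, modular, and elliptic examples. A purely algebraic substitute would be an induction on $n$ peeling off one variable at a time via Ritt's theory, but this amounts to re-proving the Medvedev--Scanlon structure theorem by hand and is technically heavy. The remaining difficulties are bookkeeping: the per-component data $\tilde f$, $L$, $\ell$, $m$ genuinely may vary from factor to factor, so the assembled object is only a product of such factors lying in $\C[x_1,\dotsc,x_n]$ --- which is, however, exactly what the statement asserts --- and one must use the centralizer description (applied to $f=\ell_1\circ\tilde f^{k_1}$) to verify that passing between $f$ and its iterates keeps one inside this class.
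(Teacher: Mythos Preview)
The paper does not give its own proof of this statement: Theorem~\ref{thm:MedScan} is quoted from Medvedev and Scanlon~\cite{MedvedevScanlon} as an input to the survey, and the surrounding text only records that it supplies the implication (c)~$\Rightarrow$~(e1) in Figure~\ref{fig:diagram}. So there is nothing in the paper to compare your argument against.

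That said, what you have written is a faithful high-level outline of the actual Medvedev--Scanlon argument: reduce to irreducible components, invoke the ACFA trichotomy together with the identification of the exceptional maps to get disintegratedness, use disintegratedness to cut an irreducible preperiodic hypersurface down to a binary condition, and then classify preperiodic plane curves via Ritt's decomposition theorems and the structure of the centralizer of a non-exceptional polynomial. Your caveats are also accurate: the model-theoretic step is where the real content lives, and a direct algebraic substitute essentially reproves their structure theorem. As a sketch this is fine; just be aware that you are summarizing~\cite{MedvedevScanlon}, not the present paper.
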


In terms of Figure~\ref{fig:diagram}, Theorem~\ref{thm:MedScan} says that (c) implies (e1).

The proof of Dynamical Manin--Mumford by Ghioca, Nguyen, and Ye similarly proceeds by explicitly describing the subvarieties of $(\PP^1)^n$ that have infinitely many preperiodic points for $\Phi$, proving that (d) implies (e1) in Figure~\ref{fig:diagram}.  They combine this with an argument for (c) implies (d) to give an independent proof of (c) implies (e1).

\subsection{The Connection with Heights}
From equations~\eqref{eq:dynMah1D} and~\eqref{eq:mahlerheight}, we see that single-variable  Mahler measure is directly related to heights of points in $\PP^n(\overline{\Q})$. It is natural to ask if multivariate Mahler measure is also given by a height.

A very strong candidate for this is the dynamical height of subvarieties introduced by Zhang in~\cite{Zhang}. Zhang shows that this dynamical height vanishes on both preperiodic subvarieties and on subvarieties with infinitely many preperiodic points, and he conjectures the converse.  Since dynamical Mahler measure also detects polynomials whose zero locus is preperiodic or has infinitely many preperiodic points, it is natural to conjecture that $\m_f(P)$ is equal to the dynamical height of the hypersurface $\{P = 0\}$ with respect to the map $f \times \dotsb \times f$.
This conjecture is also supported by work of Chambert-Loir, and Thuillier~\cite{Chambert-Loir-Thuillier} showing that  ordinary multivariate Mahler measure agrees with the dynamical height for hypersurfaces in $\PP^n$ under the map $f \times \dotsb \times f$, where $f$ is a power map.
 
 A promising direction for future work would be to prove this relationship between dynamical Mahler measure and dynamical height, which would provide an additional connection between (a) and (b)  in Figure~\ref{fig:diagram}.  Zhang's height is too technical to define here, but we give a general overview of the main ideas.

Let $X$ be a projective variety with a line bundle $\mathcal{L}$, and $\Phi: X \to X$ a dominant endomorphism (that acts compatibly with the line bundle: $\Phi^* \mathcal{L} \cong \mathcal{L}^d$, where $d$ should be thought of as the degree of $\Phi$).   Zhang defined a height $h_\Phi$  on subvarieties of $X$ (or more generally, cycles on $X$) which, like the dynamical height we saw earlier, behaves nicely under pushforward: $h_{\Phi}(\Phi(Y)) = d h_{\Phi}(Y)$.

The dynamical height of a subvariety is always non-negative.  If $Y$ is preperiodic, then a formal consequence of the compatibility with pushforward is that $h_\Phi(Y) = 0$ (\cite[Theorem 2.4(b)]{Zhang}). The converse implication is conjectured~\cite[Conjecture 2.5]{Zhang}.  
Zhang also  shows that if $h_\Phi(Y) > 0$, then there is a Zariski open subset $U \subseteq Y$ which contains no preperiodic subvarieties, hence in particular no preperiodic points. So the preperiodic points of $Y$ are not Zariski dense.  Taking the contrapositive, we have the following result:
\begin{prop}[Zhang~\cite{Zhang}]\label{general d implies b}
	If the preperiodic points of $Y$ are Zariski dense, then $h_\Phi(Y) = 0$.
\end{prop}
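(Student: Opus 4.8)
The plan is to deduce the statement as the contrapositive of the two structural facts about Zhang's height that were recalled immediately above. Concretely, I will use (i) that the dynamical height of a subvariety is always non-negative, so that it suffices to rule out $h_\Phi(Y) > 0$; and (ii) Zhang's theorem~\cite{Zhang} that if $h_\Phi(Y) > 0$, then $Y$ contains a non-empty Zariski open subset $U$ which contains no preperiodic subvarieties.

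First I would record the trivial observation that a single preperiodic point is in particular a preperiodic subvariety (a zero-dimensional one), so the conclusion of (ii) that $U$ contains no preperiodic subvarieties forces $U$ to contain no preperiodic points at all. Next, I argue by contradiction: suppose the preperiodic points of $Y$ are Zariski dense but $h_\Phi(Y) > 0$. Applying (ii) produces a non-empty Zariski open $U \subseteq Y$ containing no preperiodic points. On the other hand, Zariski density of the set of preperiodic points of $Y$ means exactly that this set meets every non-empty Zariski open subset of $Y$, and in particular meets $U$, a contradiction. Therefore $h_\Phi(Y) \le 0$, and combining this with the non-negativity in (i) gives $h_\Phi(Y) = 0$, as claimed.

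The only genuinely substantial ingredient here is the cited theorem of Zhang producing the open subset $U$; its proof rests on the arithmetic intersection theory and equidistribution machinery underlying the canonical height on cycles, which is far beyond the scope of a survey, and we take it as a black box. Granting that input, the argument above is purely formal, the key point being simply that Zariski density of a set is precisely the failure of any non-empty open subset to avoid it.
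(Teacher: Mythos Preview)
Your proposal is correct and matches the paper's own argument essentially verbatim: the paper states Zhang's result that $h_\Phi(Y)>0$ yields a Zariski open $U\subseteq Y$ with no preperiodic subvarieties (hence no preperiodic points), and then simply says ``Taking the contrapositive, we have the following result.'' Your write-up just spells out that contrapositive (and the use of non-negativity) in slightly more detail.
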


In our situation, we are interested in polynomials $P(x_1, \dotsc, x_n) \in \C[x_1,\dots,x_n]$. These polynomials naturally cut out hypersurfaces in $\A^n$, not projective varieties.  However, we can solve this problem by completing $\A^n$ to a projective variety: either to $\PP^n$ (as in the work of Chambert-Loir and Thullier~\cite{Chambert-Loir-Thuillier}), or to $(\PP^1)^n$ (as is the work of Ghioca, Nguyen, and Ye~\cite{Ghioca-Nguyen-Ye-two-var, Ghioca-Nguyen-Ye-multivar}).  We conjecture that the heights given by the two options are equal to each other, as well as to the dynamical Mahler measure.

In terms of Figure~\ref{fig:diagram},  \cite[Theorem 2.4(b)]{Zhang} gives the implication (c) implies (b),  \cite[Conjecture 2.5]{Zhang} would give (b) implies (c), and Proposition~\ref{general d implies b} gives (d) implies (b).

\subsection{New results}
Here we highlight the equivalences and implications from Figure~\ref{fig:diagram} proved  in~\cite{TwoVarPoly} and in Sections~\ref{sec:e1e2} and~\ref{sec:ad} of the current work.

(a) implies (d): This result for two-variable polynomials, conditional on the Dynamical Lehmer Conjecture, is the  main result of~\cite{TwoVarPoly}. (See  Theorem \ref{thm:mainresult} here for a complete statement.)
The proof proceeds in two main steps, first obtaining (a) implies (d) conditional on the Dynamical Lehmer Conjecture, and then using the  two-variable case of Theorem~\ref{GNY-general}  from~\cite{Ghioca-Nguyen-Ye-two-var} for the (d) implies (e) step.
More specifically, in \cite[Propostion 7.6]{TwoVarPoly} we show that (a) implies (d) conditional on Dynamical Lehmer for pairs $(P, f)$ satisfying a technical condition that we call the bounded orders property, which we then show holds in all cases.

In Section 8 of this paper, we strengthen the two-variable result by showing that (a) implies (d) without Dynamical Lehmer's conjecture for polynomials $f$ such that $\PrePer(f) \subseteq\J(f)$.  Again, combining this with the implication (d) implies (e) from \cite{Ghioca-Nguyen-Ye-two-var} gives us a two-variable Kronecker's Lemma for these polynomials. The implication (a) implies (d) for polynomials in more than two variables is still open.

(e2) implies (a): This was shown in ~\cite[Corollary 6.4]{TwoVarPoly}. The strategy for the proof consists of noticing that $\m_f(x-y)=0$ for arbitrary $f\in \Z[x]$ monic, and then using the fact that the Mahler measure is invariant under composition with any polynomial commuting with $f$. 

(e1) implies (e2): This is shown in Section \ref{sec:e1e2} by studying integrality of polynomials that satisfy certain commutative properties. A key step is to prove that a polynomial commuting with some monic $f \in \overline{\Z}[x]$ (and satisfying certain technical conditions)  must have coefficients in $\overline{\Z}[x]$. (See Proposition \ref{prop:coeff}.)

\section{An integrality property of commuting polynomials} \label{sec:e1e2}

To show that (e1) implies (e2) in Figure~\ref{fig:diagram}, we need to know that we can choose our product of factors of the form $\tilde{f}^n(x_i)  - L(\tilde{f}^m(x_j))$ to have integer coefficients.  We will do this by showing that $\tilde{f}$ and $L$ have algebraic integer coefficients, hence the product also has algebraic integer coefficients, and the coefficients of the product can be then assumed to be rational integers by enlarging the set of factors, if necessary, to be stable under the Galois action.

Let $\overline{\Z}$ be the ring of algebraic integers, which has fraction field $\overline{\Q}$ and unit group $\overline{\Z}^\times$.
First we  show that $\tilde{f}\in \overline\Z[x]$.

\begin{lem}\label{lem:integercomposition}
	If $g, h \in \overline{\Q}[x]$ are polynomials of positive degree with leading coefficients in $\overline{\Z}^\times$ such that $f = g \circ h \in \Z[x]$, then the polynomials  $g(x + h(0))$ and $h(x) - h(0)$ both lie in $\overline{\Z}[x]$.
\end{lem}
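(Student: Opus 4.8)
The plan is to reduce to the case $h(0) = 0$ by a harmless normalization, then to use a Gauss's-lemma / content argument over the Dedekind domain $\overline{\Z}$ (or more precisely, localize at each finite place of $\overline{\Q}$ and argue valuation-theoretically). First, set $h_1(x) = h(x) - h(0)$ and $g_1(x) = g(x + h(0))$, so that $g_1 \circ h_1 = g \circ h = f$ and $h_1(0) = 0$; the leading coefficients are unchanged, so it suffices to prove the statement under the extra hypothesis $h(0) = 0$, i.e.\ to show that if $g \circ h = f \in \Z[x]$ with $h(0) = 0$ and both leading coefficients units in $\overline{\Z}$, then $g, h \in \overline{\Z}[x]$.

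Next I would argue one finite place at a time. Fix a finite place $v$ of $\overline{\Q}$ with valuation ring $\mathcal{O}_v$ and normalized valuation $\mathrm{ord}_v$. Extend $\mathrm{ord}_v$ to polynomials via the Gauss norm: $\mathrm{ord}_v(\sum c_i x^i) := \min_i \mathrm{ord}_v(c_i)$. The key facts are that the Gauss norm is multiplicative (Gauss's lemma: $\mathrm{ord}_v(pq) = \mathrm{ord}_v(p) + \mathrm{ord}_v(q)$) and, crucially, that it behaves well under composition when the inner polynomial has zero constant term. I would prove the needed composition inequality: if $h(0) = 0$ and $h$ has unit leading coefficient, then $\mathrm{ord}_v(g \circ h) = \mathrm{ord}_v(g) + (\deg g)\cdot\min(0, \mathrm{ord}_v(h))$ — or at least the inequality $\mathrm{ord}_v(g \circ h) \le \mathrm{ord}_v(g)$ when $h$ is $v$-integral, together with a matching lower bound coming from the unit leading term. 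Concretely: write $g(x) = \sum_{j=0}^{m} b_j x^j$ with $b_m \in \overline{\Z}^\times$; then $g(h(x)) = \sum_j b_j h(x)^j$, the top-degree term is $b_m h(x)^m$ whose leading coefficient is $b_m \cdot (\mathrm{lc}(h))^m$, a unit, so $\mathrm{ord}_v(g \circ h) \le \mathrm{ord}_v(b_m) = 0$ gives nothing directly — so instead I track the coefficient of $x^{m \deg h}$ in $g\circ h$, which is exactly $b_m (\mathrm{lc}\,h)^m$, forcing $\mathrm{ord}_v(g\circ h) \le 0$, and since $f \in \Z[x]$ we get $\mathrm{ord}_v(f) = 0$ at every finite $v$; then I must squeeze $h$ and $g$ to be $v$-integral from $\mathrm{ord}_v(g\circ h) = 0$.

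The main obstacle — and the part deserving the most care — is the implication ``$g\circ h$ has $v$-integral coefficients (indeed $\mathrm{ord}_v = 0$) $\implies$ $h$ has $v$-integral coefficients,'' given only that $h(0)=0$ and $\mathrm{lc}(h) \in \overline{\Z}_v^\times$. The clean way is: suppose for contradiction $\mathrm{ord}_v(h) = -e < 0$, i.e.\ some coefficient of $h$ has a pole at $v$ while $h(0) = 0$ and the leading coefficient is a unit. Consider the Newton polygon of $h$ (with respect to $v$); having a unit leading coefficient and a zero constant term, while some middle or low coefficient is non-integral, forces a segment of negative slope, hence $h$ has a root $\rho$ with $\mathrm{ord}_v(\rho) < 0$. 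Then $h(\rho) = 0$, and since $\mathrm{lc}(h)$ is a unit the product of the roots of $h$ (up to sign, $= \pm h(0)/\mathrm{lc}(h) = 0$)... this last point actually shows $0$ is a root; the real content is that the \emph{nonzero} roots of $h$ all lie in $\overline{\Z}_v$ precisely when $h(x)/x$ has integral coefficients, which is what we want. So the argument is: factor $h(x) = x\cdot\mathrm{lc}(h)\prod(x - \rho_k)$ over the $\rho_k \neq 0$; then $g(h(x)) = b_m (\mathrm{lc}\,h)^m \prod_k (x-\rho_k)^m \cdot x^m + (\text{lower in }g)$, and I compare $v$-adic Newton polygons of $g\circ h$ and of $h$: any negative-slope segment of $h$ (a root $\rho_k$ with $\mathrm{ord}_v(\rho_k)<0$) produces a root of $g\circ h$ at that same $\rho_k$ (since $h(\rho_k)=0 \implies g(h(\rho_k)) = g(0) = b_0$, so we actually need $b_0 = 0$ too, i.e.\ we need $f(\rho_k) = b_0$)... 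This shows the genuinely delicate point is bookkeeping the constant term $g(0)$. I would handle it by instead arguing with the substitution that makes $f$ monic-up-to-unit and invoking that $f \in \Z[x]$ has all its coefficients integral at \emph{every} finite place, then running the Newton-polygon comparison to conclude $h$ and then $g$ must have had integral coefficients at each such place; intersecting over all finite places of $\overline{\Q}$ gives $g, h \in \overline{\Z}[x]$, which after undoing the normalization yields $g(x+h(0)), h(x)-h(0) \in \overline{\Z}[x]$ as claimed.
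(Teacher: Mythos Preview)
Your reduction to the case $h(0)=0$ is exactly right, and the instinct to localize at a place $v$ is reasonable, but the argument you sketch after that has a real gap that you yourself flag and do not close. When you pass to a root $\rho$ of $h$ (produced by a negative-slope segment of the Newton polygon of $h$), you get only $f(\rho)=g(h(\rho))=g(0)$, and there is no reason for $g(0)$ to vanish or to be $v$-integral; so this does not contradict $f\in\Z[x]$. Your fallback (``running the Newton-polygon comparison'') is not an argument, and in fact one can cook up $g$ with wild constant term so that the comparison you have in mind gives no information without further input.

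The missing idea, which is what the paper uses, is to factor through the roots of $g$ rather than the roots of $h$. Since the leading coefficient of $f=g\circ h$ is $a b^{\deg g}\in\overline{\Z}^\times$, every root $\gamma$ of $f$ lies in $\overline{\Z}$. Writing $g(x)=a\prod_i(x-\alpha_i)$ gives
\[
f(x)=a\prod_i\bigl(h(x)-\alpha_i\bigr),
\]
so each $h(x)-\alpha_i$ is, up to the unit $b=\mathrm{lc}(h)$, a product of linear factors $(x-\gamma)$ with $\gamma\in\overline{\Z}$; hence $h(x)-\alpha_i\in\overline{\Z}[x]$. Evaluating at $x=0$ and using $h(0)=0$ forces $\alpha_i\in\overline{\Z}$, whence both $h$ and $g=a\prod_i(x-\alpha_i)$ lie in $\overline{\Z}[x]$. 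If you like Newton polygons: this says the polygon of $h(x)-\alpha_i$ lies on or above the horizontal axis because all its roots are $v$-integral and its leading coefficient is a $v$-unit. The point is that you must compare $f$ with $h(x)-\alpha_i$, not with $h(x)$ itself.
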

\begin{proof}
	We follow the method of proof of~\cite[Theorem 2.1]{Gusic}.
	
	By replacing $g(x)$ and $h(x)$ with $g(x+ h(0))$ and $h(x) - h(0)$ respectively, we may assume that $h(0) = 0$.  In this case, it suffices to show that $g(x)$, $h(x)  \in \overline{\Z}[x]$.
	
	Write $g(x) = a \prod_i (x-\alpha_i)$ and $h(x) = b \prod_j (x-\beta_j)$.  By assumption $a, b \in \overline{\Z}^\times$.	
	Note that the polynomial $f = g \circ h$ has leading coefficient equal to $a \cdot b^{\deg g} \in \overline{\Z}^\times$.   Hence, the roots of $f$ are all algebraic integers, and we can factor $f(x)$ in $\overline{\Z}[x]$ as $f(x) = a \cdot b^{\deg g} \prod_k (x - \gamma_k)$ with $\gamma_k \in \overline{\Z}$.
	We have another factorization:	
	\[
	f(x)  = g(h(x)) = a \prod_{i} (h(x) - \alpha_i).
	\]
	By unique factorization, we must have $h(x) - \alpha_i = b \prod_{k \in S_i} (x - \gamma_{k})$ for some subset $S_i$ of the $\gamma_k$,  
	and so in particular $h(x) - \alpha_i \in \overline{\Z}[x]$ has algebraic integer coefficients.  Since we have assumed that $h(0) = 0$, we conclude that $\alpha_i \in \overline{\Z}$ and $h(x) \in \overline{\Z}[x]$.  Then also $g(x) = a \prod_i (x-\alpha_i) \in \overline{\Z}[x]$, and this concludes the proof. 
\end{proof}

\begin{prop}\label{prop:iterates}
	Suppose that $f \in \overline{\Q}[x]$ has degree $>1$ and that the $n$-fold iterate $f^n = f \circ \dotsb \circ f$ is a monic polynomial in $\overline{\Z}[x]$.  Then in fact $f \in \overline{\Z}[x]$.
\end{prop}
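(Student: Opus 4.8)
The plan is \emph{not} to strip off copies of $f$ one at a time: applying Lemma~\ref{lem:integercomposition} to the decomposition $f^n = f^{n-1} \circ f$ shows only that $f(x) - f(0)$ and $f^{n-1}(x + f(0))$ lie in $\overline{\Z}[x]$, which reduces the problem to proving that the single constant $f(0)$ is an algebraic integer, and the preimage structure of the iterates does not seem to force this. Instead I would use the \emph{fixed points} of $f$, which are automatically fixed points of $f^n$.

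Concretely, first I would pin down the leading coefficient. If $d = \deg f \ge 2$ and $a$ denotes the leading coefficient of $f$, a one-line induction on $k$ (using $f^{k+1} = f \circ f^{k}$) shows that $f^{k}$ has degree $d^{k}$ and leading coefficient $a^{1 + d + \dots + d^{k-1}}$. Since $f^n$ is monic, $a$ is a root of unity, so in particular $a \in \overline{\Z}^{\times} \subseteq \overline{\Z}$. Next, since $f^n \in \overline{\Z}[x]$ is monic of degree $d^n > 1$, the polynomial $f^n(x) - x$ is monic with coefficients in $\overline{\Z}$, hence all of its roots are algebraic integers. Every fixed point $\alpha$ of $f$ satisfies $f^n(\alpha) = \alpha$, so $\alpha$ is a root of $f^n(x) - x$ and is therefore an algebraic integer. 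Because $d \ge 2$, the polynomial $f(x) - x$ still has degree $d$ and leading coefficient $a$, so it factors as $f(x) - x = a \prod_{i=1}^{d}(x - \alpha_i)$ with every $\alpha_i \in \overline{\Z}$ and $a \in \overline{\Z}$; hence $f(x) - x \in \overline{\Z}[x]$, and therefore $f(x) = x + (f(x) - x) \in \overline{\Z}[x]$, as desired.

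I expect no genuinely hard step here once the right object is in view: the only delicate points are the bookkeeping for the leading coefficient of $f^n$ and the observation that $\deg f \ge 2$ is exactly what keeps $\deg(f(x)-x) = d$, so that all $d$ fixed points of $f$ are accounted for in the factorization of $f(x) - x$. The conceptual hurdle, and the reason the naive decomposition argument stalls, is that one should track the \emph{periodic} points of $f^n$ rather than its preimage structure; with that in hand the proof does not even invoke Lemma~\ref{lem:integercomposition}.
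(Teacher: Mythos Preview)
Your argument is correct and is in fact cleaner than the paper's. The paper does take the route you dismiss in your first paragraph: it applies Lemma~\ref{lem:integercomposition} to $f^n = f^{n-1} \circ f$ to reduce to showing $c = f(0) \in \overline{\Z}$, and then resolves that remaining step with an ad hoc computation, observing that
\[
f^n(c) - c = f(f^n(0)) - c = a_d (f^n(0))^d + \dotsb + a_1 f^n(0) \in \overline{\Z},
\]
so that $c$ is a root of the monic polynomial $f^n(x) - x - A$ for an explicit $A \in \overline{\Z}$. So the ``stalled'' approach does go through, and at the last moment it lands on an idea close to yours: exhibiting $c$ as a root of a monic integral perturbation of $f^n(x)-x$.

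Your route is more direct. By looking at $f(x)-x$ rather than isolating the constant term, you get all the coefficients at once from the fact that the fixed points of $f$ are among the roots of the monic polynomial $f^n(x)-x \in \overline{\Z}[x]$; Lemma~\ref{lem:integercomposition} is never invoked. The paper's approach has the minor virtue of making explicit which single coefficient is the obstruction and isolating exactly the algebraic relation that kills it, but your argument is shorter and conceptually uniform.
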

\begin{proof}
	Since the leading coefficient of $f^n$ is $1$, we see that $f$ must have leading coefficient in $\overline{\Z}^\times$ (in fact, it must be a root of unity, but we do not need this). The same is true for any iterate of $f$.
	
	Write $c = f(0)$.  We now apply Lemma~\ref{lem:integercomposition} to the composition $f^n = (f^{n-1}) \circ f$, and conclude that  $f(x) - c \in \overline{\Z}[x]$.
	
	Hence, if we write $f(x) = a_d x^d +  \dotsb  + a_1 x +  c$, we have shown that all $a_j$ with $j > 0$ are algebraic integers.  It remains to show that also $c$ is an algebraic integer. 	For this, we note that 

\begin{equation}\label{eq:slick}
	f^n(c) -c = f^{n+1}(0) - c = f(f^n(0)) - c = a_d (f^n(0))^d + \dotsb + a_1(f^n(0)),
	\end{equation}
	where the constant terms cancel out.   The right hand side lies in $\overline{\Z}$ because all the $a_i$ do, as does $f^n(0)$ since $f^n \in \overline{\Z}[x]$.
	
	Hence $c$ is a root of a polynomial of the form $f^n(x) - x - A = 0$, where $A \in \overline{\Z}$ is the right hand side of \eqref{eq:slick}.  Since $f$ has degree $>1$, this is a monic polynomial with algebraic integer coefficients, hence $c \in \overline{\Z}$ also, as desired.
\end{proof}

Before proving our next statement, we need the following result:

\begin{thm}\cite{Julia,Ritt} \label{thm:JR}
If two polynomials $f$ and $g$ commute under composition, then up to conjugation with the same linear polynomial, either both are power functions, both are plus or minus Chebyshev polynomials, or an iterate of one is equal to an iterate of the other.
\end{thm}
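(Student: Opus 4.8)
\smallskip
\noindent\textbf{Proof strategy (proposal).}
The plan is to put $f$ in Böttcher normal form and then exploit the fact that commuting polynomials are forced to share their filled Julia set. Conjugating $f$ and $g$ simultaneously by a single linear polynomial, first arrange that $f$ is monic; comparing leading coefficients in $f\circ g=g\circ f$ then shows that the leading coefficient $\zeta_0$ of $g$ satisfies $\zeta_0^{\,d-1}=1$, where $d=\deg f$. The relation $f\circ g=g\circ f$ makes $\K_f$ totally invariant under $g$: since $g$ is proper and surjective, $z\in g^{-1}(\K_f)$ iff $\{f^n(g(z))\}=\{g(f^n(z))\}$ is bounded iff $\{f^n(z)\}$ is bounded, so $g^{-1}(\K_f)=\K_f$ and hence $g(\K_f)=\K_f$. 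Thus $\K_f$ is a compact totally $g$-invariant set, so $\K_f\subseteq\K_g$; by the symmetric argument $\K_g\subseteq\K_f$, so $\K_f=\K_g$ and $\J_f=\J_g$. In particular $f$ and $g$ share the same equilibrium measure $\mu:=\mu_f=\mu_g$, by uniqueness of the equilibrium measure.

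Next I would bring in the Böttcher coordinate: there is an analytic germ $\phi$ at $\infty$, tangent to the identity, with $\phi\circ f=\phi^{\,d}$, and when $\J_f$ is connected $\phi$ extends to a conformal isomorphism $F_\infty\to\{|w|>1\}$. Transporting $f\circ g=g\circ f$ through $\phi$, the germ $\psi:=\phi\circ g\circ\phi^{-1}$ at $\infty$ satisfies $\psi(w)^d=\psi(w^d)$, with $\psi(w)\sim\zeta_0 w^e$, $e=\deg g$. Writing $\psi(w)=\zeta_0 w^e\exp\!\big(\sum_{k\ge1}c_kw^{-k}\big)$, the functional equation yields again $\zeta_0^{\,d-1}=1$ together with the recursion $d\,c_m=c_{m/d}$ (with $c_m=0$ whenever $d\nmid m$), which on iterating kills every $c_k$. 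Hence $\psi(w)=\zeta w^e$ with $\zeta^{d-1}=1$: in Böttcher coordinates $f$ is the monomial $w\mapsto w^d$ and $g$ is $w\mapsto\zeta w^e$, and likewise $g^m$ becomes $w\mapsto\zeta^{(e^m-1)/(e-1)}w^{e^m}$.

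The conclusion then follows from the classification of the possible shapes of $\J_f$. The only polynomials whose Julia set is a circle are those conjugate to $z\mapsto z^d$, and the only ones whose Julia set is a line segment are those conjugate to $\pm T_d$; these give the first two alternatives of the theorem, and in each case the corresponding normal form for $g$ follows by the same linear conjugation (which carries $g$ into a monomial, respectively a $\pm T_e$). In the remaining case $f$ is non-exceptional, and here the key point is rigidity of the finite critical orbit portrait of $f$: this portrait is carried on $\J_f$ and must be respected by $g$ as well, and matching it against the boundary dynamics $w\mapsto w^d$ and $w\mapsto\zeta w^e$ through $\phi$ forces $e^m=d^n$ for suitable positive integers $m,n$ together with $\zeta^{(e^m-1)/(e-1)}=1$; then $\phi\circ g^m\circ\phi^{-1}=\phi\circ f^n\circ\phi^{-1}$, so $g^m=f^n$, since two polynomials whose germs at $\infty$ agree through the injective $\phi$ coincide. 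When $\J_f$ is disconnected no exceptional case can occur, and the same bookkeeping on the partially defined Böttcher coordinate again produces a common iterate.

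The main obstacle is precisely this last geometric step: extracting the trichotomy from the normal form ``$g=\zeta w^e$ in Böttcher coordinates.'' This is the content of Ritt's combinatorial argument, equivalently of the classification of polynomials whose Julia set is a circle or a segment, and carrying it out requires careful control of the critical orbit portrait together with the exact root of unity $\zeta$ and the degrees $d,e$ in order to recognize when $g^m=f^n$. Since the statement is classical we simply cite \cite{Julia,Ritt}; an alternative to reproving it would be to invoke the classification of rational maps sharing a measure of maximal entropy (Levin--Przytycki, Eremenko), which in the polynomial case specializes to exactly this trichotomy.
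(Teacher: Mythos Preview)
The paper does not prove this theorem at all: Theorem~\ref{thm:JR} is stated with citation to \cite{Julia,Ritt} and then immediately used (in the proof of Corollary~\ref{cor:gint}) as a black box. So there is no ``paper's own proof'' to compare against; your proposal already goes well beyond what the paper does.

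As for the content of your sketch: the strategy via shared Julia sets and B\"ottcher coordinates is a standard modern route to the Julia--Ritt theorem, and the reduction to $\psi(w)=\zeta w^e$ in B\"ottcher coordinates is carried out correctly. You are also right that the real work lies in the final step, deducing the trichotomy from this normal form, and you appropriately flag this as the obstacle before deferring to \cite{Julia,Ritt}. One point to be careful about in the exceptional cases: once you know $\J_f=\J_g$ is a round circle (respectively a segment), you get that $f$ and $g$ are \emph{each} conjugate to a power map (respectively $\pm T_n$), but you should check that a \emph{single} linear conjugation works for both simultaneously; this follows because the conjugating map is determined up to a symmetry of the Julia set, but it deserves a sentence. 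Since the paper treats the result as classical input, simply citing \cite{Julia,Ritt} (as you do at the end) is entirely in keeping with the paper's own treatment.
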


\begin{cor}\label{cor:gint}
	If $f\in \overline{\Z}[x]$ is a monic polynomial of degree $>1$ that is not conjugate (over $\C$) to a power function or plus or minus a Chebyshev polynomial,
and $g \in \overline{\Q}[x]$ of degree $>1$ commutes with some iterate of $f$, then in fact $g \in \overline{\Z}[x]$.  
\end{cor}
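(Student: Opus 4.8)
Here is my plan for proving Corollary~\ref{cor:gint}.

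\medskip

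The plan is to combine Theorem~\ref{thm:JR} (the Julia--Ritt classification of commuting polynomials) with Proposition~\ref{prop:iterates}. First I would reduce to the case where $g$ commutes with $f$ itself: if $g$ commutes with $f^k$, then since $f$ is assumed \emph{not} conjugate to a power map or $\pm$ a Chebyshev polynomial, the same is true of $f^k$ (conjugacy and the exceptional classes are stable under iteration), so by Theorem~\ref{thm:JR} applied to the commuting pair $(f^k, g)$ we land in the third case: some iterate of $f^k$ equals some iterate of $g$, say $f^{ka} = g^b$. In particular $g^b = f^{ka}$ is a monic polynomial with coefficients in $\overline{\Z}[x]$, since $f \in \overline{\Z}[x]$ is monic. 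Now I would like to apply Proposition~\ref{prop:iterates} directly to conclude $g \in \overline{\Z}[x]$ — but Proposition~\ref{prop:iterates} is stated for the $b$-fold iterate $g^b$, and that is exactly what we have. So $g \in \overline{\Z}[x]$, as desired.

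\medskip

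Let me spell out the one point that needs care: Proposition~\ref{prop:iterates} requires $g \in \overline{\Q}[x]$ of degree $>1$ with $g^b$ monic in $\overline{\Z}[x]$. We are given $g \in \overline{\Q}[x]$ of degree $>1$; and $g^b = f^{ka}$ is monic (product of leading coefficients of monic polynomials) with coefficients in $\overline{\Z}$ because $f \in \overline{\Z}[x]$ and $\overline{\Z}[x]$ is closed under composition. So all hypotheses are met and the proposition applies verbatim. The degree condition $\deg g > 1$ is used to guarantee $\deg g^b > 1$, which is needed for Proposition~\ref{prop:iterates}; it is given in the statement of the corollary.

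\medskip

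I expect the only genuine subtlety to be the bookkeeping around the exceptional cases of Theorem~\ref{thm:JR}, i.e.\ making sure that excluding power maps and $\pm$ Chebyshev polynomials for $f$ really does exclude the first two Julia--Ritt alternatives for the pair $(f^k, g)$. The point is that the Julia--Ritt trichotomy says the pair is conjugate (by a \emph{single} linear $L$) to a pair of the stated type; if $(f^k)^L$ were a power map or $\pm$ a Chebyshev polynomial, then $f^k$ — and hence $f$, since the $d$-th root of a power map is a power map and an iterate-root of $\pm T_d$ is again $\pm$ a Chebyshev polynomial up to sign — would be conjugate to an exceptional map, contradicting the hypothesis. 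So we are forced into the third alternative, $f^{ka} = g^b$ for suitable positive integers $a, b$, and the argument closes. This is more of a careful-reading obstacle than a mathematical one; the substantive content is entirely in Proposition~\ref{prop:iterates}, already proved.
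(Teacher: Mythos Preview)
Your proposal is correct and follows essentially the same route as the paper: apply Theorem~\ref{thm:JR} to the commuting pair $(f^k,g)$ to obtain $g^b=(f^k)^a$ for some positive integers $a,b$, observe this is monic in $\overline{\Z}[x]$, and invoke Proposition~\ref{prop:iterates}. You give more detail than the paper on why the first two Julia--Ritt alternatives are excluded; the paper simply asserts the third alternative holds, leaving the check that $f$ non-exceptional forces $f^k$ non-exceptional implicit.
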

\begin{proof}
	By assumption, $g$ commutes with $f^k$ for some $k$.  It follows from Theorem~\ref{thm:JR} that $g^a = (f^k)^b$ for some positive integers $a$ and $b$.  Hence $g^a \in \overline{\Z}[x]$ and is monic. By Proposition \ref{prop:iterates}, we conclude that $g \in \overline{\Z}[x]$.
\end{proof}

\begin{remk}
It would be worth investigating if Corollary \ref{cor:gint} is still true even when $f$ is conjugate to a power function or plus or minus a Chebyshev polynomial. 
It would be also interesting to have a proof of the statement that does not rely on Theorem \ref{thm:JR}.
\end{remk}

We now show that the commuting linear function  $L$ has coefficients in the algebraic integers.

\begin{prop}
	If $f$ in $\overline{\Z}[x]$ is monic of degree $>1$, and $L \in \overline{\Q}[x]$ is a linear polynomial that commutes with $f$, then $L \in \overline{\Z}[x]$.
\end{prop}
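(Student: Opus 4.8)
The plan is to set $L(x) = ax + b$ with $a \ne 0$ and to extract the two coefficients $a$ and $b$ in turn from the commutation relation $f \circ L = L \circ f$.

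First I would compare leading coefficients. Writing $f(x) = x^d + \cdots$ with $d > 1$, the composite $f(L(x)) = f(ax+b)$ has leading coefficient $a^d$, whereas $L(f(x)) = a f(x) + b$ has leading coefficient $a$. Hence $a^d = a$, so $a^{d-1} = 1$: the number $a$ is a root of unity, and in particular $a \in \overline{\Z}^\times$ and $1 - a \in \overline{\Z}$. It then remains only to show that $b \in \overline{\Z}$, and here I would split into two cases according to whether $a = 1$.

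If $a = 1$, then $L$ is the translation $x \mapsto x + b$ and the commutation relation becomes $f(x+b) = f(x) + b$. Since $f$ is monic of degree $d$, the coefficient of $x^{d-1}$ in $f(x+b) - f(x)$ equals $db$; as $f(x+b) - f(x)$ must be the constant $b$ and $d \ge 2$, this forces $b = 0$, so $L = \id \in \overline{\Z}[x]$. If instead $a \ne 1$, then $L$ has a unique fixed point $p = b/(1-a)$. Evaluating $f(L(x)) = L(f(x))$ at $x = p$ gives $f(p) = L(f(p))$, so $f(p)$ is also a fixed point of $L$; by uniqueness $f(p) = p$. Thus $p$ is a root of $f(x) - x$, which is monic with coefficients in $\overline{\Z}$ (subtracting $x$ does not disturb the leading term since $d > 1$), whence $p \in \overline{\Z}$ and therefore $b = (1-a)p \in \overline{\Z}$. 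In either case $L \in \overline{\Z}[x]$, as claimed.

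The argument is short, and the only step requiring a moment's care — the main point rather than a genuine obstacle — is the case $a \ne 1$: a naive term-by-term coefficient comparison only yields $b \in \tfrac{1}{d}\,\overline{\Z}$, so instead one should observe that the unique fixed point of $L$ is forced to be a fixed point of $f$, at which point integrality of $b$ follows immediately from the monicity of $f(x)-x$ over $\overline{\Z}$.
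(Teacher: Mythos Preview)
Your proof is correct. The paper's argument shares the leading-coefficient step ($a^{d}=a$, so $a$ is a root of unity), but then handles $b$ uniformly without a case split: evaluating $f\circ L = L\circ f$ at $x=0$ gives $f(b) = a\,c_0 + b$ (where $c_0 = f(0)$), so $b$ is a root of the monic polynomial $f(x)-x-ac_0 \in \overline{\Z}[x]$ and hence lies in $\overline{\Z}$.

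The difference is really just the choice of evaluation point. You evaluate at the fixed point $p$ of $L$, which has the pleasant geometric consequence that $p$ is forced to be a fixed point of $f$; but this breaks down when $a=1$ and $L$ has no finite fixed point, costing you a separate (easy) case. The paper instead evaluates at $0$, which works regardless of $a$ and yields integrality of $b$ in one stroke. Your closing remark that ``naive term-by-term coefficient comparison only yields $b \in \tfrac{1}{d}\overline{\Z}$'' is true for comparing the $x^{d-1}$ coefficients, but comparing constant terms is exactly the coefficient comparison that works directly---so the fixed-point trick, while elegant, is not strictly needed.
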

\begin{proof}
	Write $f(x) = x^n + c_{n-1}x^{n-1} + \dotsb + c_1 x  + c_0$, and $L(x) = ax + b$.
First	 look at the leading coefficient of $L \circ f = f \circ L$: 
	\[
	a = a^n,
	\]
so $a$ is a root of unity, hence in $\overline{\Z}$.

Now look at the constant coefficient of $L \circ f = f \circ L$: 
    \[
    a c_0 + b = f(b),
    \]
	so $b$ is a root of the equation $f(x) - x - a c_0 \in \overline{\Z}[x]$. So $L$ has algebraic integer coefficients.
\end{proof}

\begin{lem}
	If $f$ in $\overline{\Q}[x]$ has degree $>1$, then any polynomial $g \in \C[x]$ commuting with $f$ has coefficients in $\overline{\Q}[x]$.  \end{lem}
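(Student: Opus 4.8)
The plan is to pin down $g$ by finitely many of its values, all of which will turn out to be algebraic. The case $\deg g = 0$ is trivial: if $g$ is the constant $c$, then $g\circ f = f\circ g$ forces $f(c)=c$, so $c$ is a fixed point of $f$ and hence lies in $\overline{\Q}$ because $f\in\overline{\Q}[x]$. So I would assume from now on that $e:=\deg g\ge 1$.

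The three key observations are as follows. First, commuting propagates to iterates: $f\circ g = g\circ f$ implies $f^n\circ g = g\circ f^n$ for all $n\ge 1$ by an immediate induction, and hence $g$ maps $\PrePer(f)$ into itself, since if $\alpha$ has finite forward orbit $O=\{f^n(\alpha):n\ge 0\}$ then the forward orbit of $g(\alpha)$ is $\{f^n(g(\alpha)):n\ge 0\}=\{g(f^n(\alpha)):n\ge 0\}=g(O)$, which is finite. Second, every preperiodic point of $f$ is algebraic: if $f^n(\alpha)=f^m(\alpha)$ with $n>m\ge 0$, then $\alpha$ is a root of $f^n(x)-f^m(x)\in\overline{\Q}[x]$, which is nonzero since $\deg f^n>\deg f^m$, so $\alpha\in\overline{\Q}$. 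Third, since $\deg f\ge 2$, the set $\PrePer(f)$ is infinite (for instance it contains the full backward orbit under $f$ of a suitable fixed point; see~\cite{Silverman-arithmetic-dynamical}).

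To finish, I would choose $e+1$ distinct points $\alpha_0,\dots,\alpha_e\in\PrePer(f)$, possible by the third observation. By the first two observations, $\beta_i:=g(\alpha_i)\in\PrePer(f)\subseteq\overline{\Q}$ for each $i$, and Lagrange interpolation gives
\[
g(x)=\sum_{i=0}^{e}\beta_i\prod_{j\ne i}\frac{x-\alpha_j}{\alpha_i-\alpha_j}.
\]
Since all $\alpha_j,\beta_i\in\overline{\Q}$ and each $\alpha_i-\alpha_j\in\overline{\Q}^\times$, every coefficient of the right-hand side lies in $\overline{\Q}$, so $g\in\overline{\Q}[x]$.

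There is no serious obstacle in this argument; the only point that deserves a careful reference is the infinitude of $\PrePer(f)$, which is exactly what guarantees that enough interpolation nodes exist. If one prefers to avoid complex dynamics altogether, a parallel Galois-theoretic argument also works: comparing leading coefficients in $f\circ g=g\circ f$ shows that $g$ has algebraic leading coefficient $a$ (one gets $a^{d-1}=c^{e-1}$, where $d=\deg f$ and $c$ is the leading coefficient of $f$); a short degree count applied to the difference of two polynomials of the same degree and leading coefficient commuting with $f$ shows there is at most one such polynomial (here one uses $d\ge 2$); and for any $\sigma\in\Aut(\C/\overline{\Q})$ the polynomial $g^\sigma$ still commutes with $f$ (because $f^\sigma=f$) and has the same degree and leading coefficient as $g$, so $g^\sigma=g$. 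Since the fixed field of $\Aut(\C/\overline{\Q})$ is $\overline{\Q}$, this again forces $g\in\overline{\Q}[x]$.
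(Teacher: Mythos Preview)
Your main argument is correct and takes a genuinely different route from the paper. The paper invokes a finiteness result of Boyce (only finitely many polynomials of a given degree commute with $f$) together with Jacobsthal's recursive algorithm for computing the coefficients of such a $g$ from its leading coefficient; since the commuting condition imposes algebraic equations, this pins $g$ down in $\overline{\Q}[x]$. Your approach instead exploits the dynamical observation that $g$ preserves $\PrePer(f)$, that preperiodic points of $f$ are algebraic, and that there are infinitely many of them, then finishes with Lagrange interpolation. This is more elementary and self-contained: it avoids the external references to Boyce and Jacobsthal entirely and uses only basic facts about iteration and interpolation. Your alternative Galois-theoretic sketch is closer in spirit to the paper's proof---it is essentially a hands-on version of the Boyce/Jacobsthal finiteness step (the degree count $kd = k + e(d-1)$ forcing $k=e$ is exactly the uniqueness needed)---but again written without citing outside results. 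Both of your arguments work; the interpolation one is the more novel of the two relative to the paper.
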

\begin{proof}
Since $f\in \overline\Q[x]$, the requirement that $f\circ g = g\circ f$ gives algebraic conditions on the coefficients of $g$. From Boyce~\cite{Boyce}, we know that there are finitely many $g$ of a fixed degree commuting with $f$, so this combined with the algebraic conditions on the coefficients ensures that $g\in \overline\Q[x]$. More specifically, the algebraic condition imposed by $f\circ g = g \circ f$ guarantees that the lead coefficient of $g$ is in $\overline\Q$, and Boyce describes an algorithm due to Jacobsthal~\cite{Jacobsthal}  for computing all of the coefficients of $g$ from this lead coefficient.
\end{proof}

Combining the above, we have the following result.
\begin{prop}\label{prop:coeff}
	If $f \in \overline{\Z}[x]$ is monic of degree $>1$ and is not conjugate to a power function or plus or minus a Chebyshev polynomial, then any polynomial $g$ commuting with $f$ has coefficients in $\overline{\Z}[x]$.
	\end{prop}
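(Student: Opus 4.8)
The plan is simply to assemble the results established so far in this section, splitting into cases according to $\deg g$. The first step is to apply the immediately preceding lemma, which guarantees that any $g \in \C[x]$ commuting with $f \in \overline{\Q}[x]$ already has all its coefficients in $\overline{\Q}$. Thus from now on we may assume $g \in \overline{\Q}[x]$, and the only thing left to do is to upgrade ``algebraic'' to ``algebraic integer'' for each coefficient.

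Next I would dispose of the low-degree cases. If $g$ is constant, say $g \equiv c$, then $g \circ f = f \circ g$ forces $c = f(c)$, so $c$ is a root of the polynomial $f(x) - x$, which is monic of degree $>1$ with coefficients in $\overline{\Z}$ because $f \in \overline{\Z}[x]$; hence $c \in \overline{\Z}$. (The zero polynomial, if it happens to commute with $f$, is handled the same way, or is trivial.) If $g$ is linear, then $g$ plays exactly the role of the polynomial $L$ in the proposition proved above showing that a linear polynomial commuting with $f$ has algebraic integer coefficients, so again $g \in \overline{\Z}[x]$.

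Finally, for the case $\deg g > 1$: since $g$ commutes with $f = f^1$, it commutes with an iterate of $f$, and by hypothesis $f$ is monic in $\overline{\Z}[x]$ of degree $>1$ and not conjugate to a power function or to $\pm$ a Chebyshev polynomial, so Corollary~\ref{cor:gint} applies directly and yields $g \in \overline{\Z}[x]$. This exhausts all cases and completes the argument.

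Because each step is a direct invocation of an earlier result, there is no substantial obstacle here; the only point needing a moment's care is that Corollary~\ref{cor:gint} and the linear-$L$ proposition are stated only in degree $\geq 1$, so the constant case must be handled by the short separate fixed-point argument above. It is perhaps worth remarking that the exclusion of the power-map and Chebyshev cases in this proposition is inherited from Corollary~\ref{cor:gint} (hence ultimately from Theorem~\ref{thm:JR}), and, as noted there, it would be of interest to know whether the statement persists without that restriction.
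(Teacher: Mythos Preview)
Your proposal is correct and follows the same approach as the paper, which simply states that the proposition is obtained by ``combining the above'' results. Your version is in fact slightly more careful: you explicitly treat the constant case $g\equiv c$ via the fixed-point equation $f(c)=c$, a case the paper's preceding lemmas do not literally cover but which is indeed trivial for the reason you give.
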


We now prove our desired theorem.

\begin{thm}
\label{e1-implies-e2}
	Let $f \in \Z[x]$ be a monic polynomial with integer coefficients.
	
	 Let $P \in \Z[x_1, \dotsc, x_n]$ be a primitive polynomial with integer coefficients.  Suppose that, working in the ring $\C[x_1, \dotsc, x_n]$,  $P$ divides some polynomial $Q \in \C[x_1, \dotsc, x_n]$ that is a product of factors of the form $\tilde{f}^n(x_i)  - L(\tilde{f}^m(x_j))$ where $m, n \ge 0$ are integers, $L \in \C[x]$ is a linear polynomial commuting with an iterate of $f$, and $\tilde{f} \in \C[x]$ is a non-linear polynomial of minimal degree commuting with an iterate of $f$ (with possibly different choices of $L$, $\tilde{f}$, $n$, and $m$ for each factor).

	 Then, working in the ring $\Z[x_1, \dotsc, x_n]$,  $P(x)$ divides some polynomial $R \in \Z[x_1, \dotsc x_n]$ that is a product of factors of the form $\tilde{f}^n(x_i)  - L(\tilde{f}^m(x_j))$ where $m, n, \tilde f$, and $L$ are as above.
\end{thm}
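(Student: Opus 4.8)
The plan is to reduce from the factorization over $\C$ to one over $\Z$ in two stages: first descend the factors to $\overline{\Z}$, then make the product Galois-stable to land in $\Z$. The starting point is the hypothesis that $P$ divides $Q = \prod_j F_j$ in $\C[x_1,\dots,x_n]$, where each $F_j = \tilde f^{n_j}(x_{i(j)}) - L_j(\tilde f^{m_j}(x_{i'(j)}))$ with $L_j$ linear commuting with an iterate of $f$ and $\tilde f$ nonlinear of minimal degree commuting with an iterate of $f$. I would first dispose of the exceptional cases: if $f$ is conjugate to $z^d$ or to $\pm T_d(z)$, these can be handled separately (indeed in those cases one expects the classical higher-dimensional Kronecker Lemma, Theorem~\ref{thm:HigherdKronecker}, to apply after the appropriate change of variables, or one argues directly). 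So assume $f$ is not conjugate to a power map or $\pm$ Chebyshev.

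Next I would invoke Proposition~\ref{prop:coeff}: since $f \in \Z[x] \subseteq \overline{\Z}[x]$ is monic of degree $>1$ and not conjugate to a power or Chebyshev polynomial, every polynomial commuting with $f$ — and more generally, by Corollary~\ref{cor:gint} and the preceding propositions, with an iterate of $f$ — has coefficients in $\overline{\Z}$. In particular $\tilde f \in \overline{\Z}[x]$ and each $L_j \in \overline{\Z}[x]$, so each factor $F_j$ lies in $\overline{\Z}[x_1,\dots,x_n]$ and is monic (up to a unit) in the variable $x_{i(j)}$. Hence $Q \in \overline{\Z}[x_1,\dots,x_n]$ and is monic in a suitable monomial order; since $P$ is primitive with integer coefficients and $P \mid Q$ in $\C[x_1,\dots,x_n]$, a Gauss-lemma argument over the PID-like behavior of $\overline{\Z}$ (or rather: passing to a number field $k$ large enough to contain all coefficients, and using that $\mathcal{O}_k$ is a Dedekind domain so Gauss's lemma on contents still applies) shows $P$ divides $Q$ already in $\overline{\Z}[x_1,\dots,x_n]$, with the cofactor having algebraic integer coefficients.

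Finally, to descend from $\overline{\Z}$ to $\Z$: let $k$ be a number field containing all the coefficients of all the $F_j$ (and hence of $\tilde f$ and the $L_j$), and let $\sigma$ range over $\Gal(\overline{\Q}/\Q)$, acting on coefficients. For each $j$, $\sigma(F_j)$ is again of the required form — here I use that $f \in \Z[x]$ is $\Gal$-fixed, so $\sigma(\tilde f)$ is again nonlinear of minimal degree commuting with an iterate of $f$ and $\sigma(L_j)$ is again linear commuting with an iterate of $f$ (the defining conditions are $\Gal$-equivariant). Set $R := \prod_{\tau} \left( \prod_j \tau(F_j) \right)$ where $\tau$ runs over a set of coset representatives for $\Gal(\overline{\Q}/\Q)/\Gal(\overline{\Q}/k)$, so $R$ is $\Gal$-invariant, hence $R \in \Q[x_1,\dots,x_n]$, and since its coefficients are algebraic integers, $R \in \Z[x_1,\dots,x_n]$. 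Moreover $R$ is a product of factors of the allowed form, and $Q = \prod_j F_j$ divides $R$ (as the $\tau = \id$ block), so $P \mid Q \mid R$ in $\overline{\Z}[x_1,\dots,x_n]$; combined with $P$ primitive in $\Z[x_1,\dots,x_n]$ and $R \in \Z[x_1,\dots,x_n]$, Gauss's lemma gives that $P$ divides $R$ in $\Z[x_1,\dots,x_n]$, as desired.

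The main obstacle I anticipate is the careful bookkeeping in the Galois-descent step: one must check that the conditions "$L$ is linear commuting with an iterate of $f$" and "$\tilde f$ is nonlinear of minimal degree commuting with an iterate of $f$" are genuinely preserved under $\sigma$, which relies on $f$ having rational (indeed integer) coefficients and on the minimal degree being a $\Gal$-invariant quantity — this last point uses that $\sigma$ permutes the finite set of polynomials of each fixed degree commuting with a given iterate of $f$ (Boyce's finiteness). A secondary technical point is ensuring the content/Gauss-lemma arguments work over $\overline{\Z}$ rather than a PID; the clean fix is to do everything over a large enough $\mathcal{O}_k$, where divisibility of a primitive polynomial into one with coefficients in $\mathcal{O}_k$ behaves as expected up to localizing at the finitely many relevant primes, and then observe the cofactor's coefficients are algebraic integers because they are simultaneously in $k$ and integral over $\Z$.
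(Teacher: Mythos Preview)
Your approach is essentially the same as the paper's: show that the factors $F_j$ (hence $Q$) have coefficients in $\overline{\Z}$ via Proposition~\ref{prop:coeff}, form the Galois-stable product $R$ to descend to $\Z$, and then use primitivity of $P$ and Gauss's Lemma to conclude.  The paper does exactly this, and your anticipated obstacle about Galois-equivariance of the commuting conditions is handled in one line there (since $f \in \Q[x]$, the property ``commutes with an iterate of $f$'' is preserved by $\Gal(\overline{\Q}/\Q)$).

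One simplification worth noting: your intermediate step of proving $P \mid Q$ in $\overline{\Z}[x_1,\dotsc,x_n]$ (and the attendant worries about Gauss's Lemma over non-PID rings) is unnecessary.  The paper bypasses this entirely: once $R \in \Z[x_1,\dotsc,x_n]$ is constructed, one simply observes that $P \mid Q \mid R$ in $\C[x_1,\dotsc,x_n]$, and since both $P$ and $R$ have rational coefficients, the quotient $R/P$ automatically has rational coefficients; then a single application of Gauss's Lemma over $\Z$ finishes.  No Dedekind-domain content arguments are needed.
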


\begin{proof}

	Let $Q = Q_1,\dotsc, Q_n$ be all the Galois conjugates of $Q$.  Since the property of commuting with the polynomial $f$, which has $\Q$-coefficients, is preserved by the action of $\Gal(\overline{\Q}/\Q)$, the $Q_i$ all have factorizations of our desired form.
	
	Now let $R  = \prod_i Q_i$, which is also a product of factors of this form. Since all these factors have coefficients in $\overline{\Z}$, so does $R$. But also $R$ is invariant under the Galois action by construction, so $R \in \Q[x]$; therefore, $R \in \Z[x]$.
	
	Finally, we need to check that $P$ divides $R$ in the ring $\Z[x_1, \dotsc, x_n]$.  From the above, we know that, inside the ring $\C[x_1, \dotsc, x_n]$, $P$ divides $Q$, so also $R$. Since both $P$ and $R$ have rational coefficients, the quotient $R/P$ does also, and $P$ divides $R$ in $\Q[x_1, \dotsc, x_n]$.  Since additionally $P$ and $R$ have integer coefficients, and $P$ is primitive, by Gauss's Lemma $P$ divides $R$ in $\Z[x_1, \dotsc, x_n]$, as desired.
\end{proof}

\section{Dynamical Kronecker's Lemma in some two-variable cases}
\label{sec:ad}

In this section we prove that (a) implies (d) in Figure~\ref{fig:diagram} for polynomials $f$ that satisfy a certain condition. More precisely, we give an alternate proof of the following key step in the proof of two-variable Dynamical Kronecker, which replaces the assumption of the Dynamical Lehmer's Conjecture with the assumption that the preperiodic points of $f$ belong to its Julia set:

\begin{thm}\label{mahler-preperiodic}
Assume  that $\PrePer(f) \subseteq \J_f$.
	If $P(x ,y) \in \Z[x, y]$ with $\m_f(P) = 0$, then the graph of $P(x, y) = 0$ passes through infinitely many points $(\alpha, \beta)$ for which $\alpha$ and $\beta$ are both preperiodic for $P$.
\end{thm}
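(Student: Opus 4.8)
The plan is to push the hypothesis $\m_f(P)=0$ through the two-variable dynamical Jensen formula to control the fibres of $\{P=0\}$ over $\J_f$, to upgrade this from an almost-everywhere statement to an everywhere statement using that $\J_f$ is the support of $\mu_f$, and then to exploit $\PrePer(f)\subseteq\J_f$ twice: once to force $P$ to be monic in $y$, and once to show that specializing $x$ to a preperiodic point of $f$ yields a polynomial in $y$ whose roots are again preperiodic. First we reduce. Factoring $P$ and using that $\m_f\ge 0$ on nonzero integer polynomials (Proposition~\ref{prop:convergence}), we may assume $P$ is non-constant and primitive. If a minimal polynomial $m_\gamma(x)\in\Z[x]$ over $\Q$ divides $P$ in $\Z[x,y]$, write $P=m_\gamma(x)^e\widetilde P$ with $e\ge1$ and $\widetilde P\in\Z[x,y]$ (Gauss's Lemma); then $0=\m_f(P)=e\,\m_f(m_\gamma)+\m_f(\widetilde P)$ with both terms $\ge0$, so $\m_f(m_\gamma)=0$, and $\gamma$ is preperiodic by the one-variable Dynamical Kronecker's Lemma~\ref{lem:dynamicalKronecker}. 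The vertical line $x=\gamma$ then meets $\{P=0\}$ in $(\gamma,\beta)$ for every preperiodic $\beta$, of which there are infinitely many, so the theorem holds in that case. The same argument handles factors $m_\delta(y)$ and the degenerate cases $P\in\Z[x]$, $P\in\Z[y]$. Hence we may assume in addition that $\deg_y P\ge 1$ and that the coefficients of $P$ as a polynomial in $y$ over $\C[x]$ are coprime (so that no fibre $P(z,\,\cdot\,)$ is identically zero), and symmetrically in $x$.

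Next comes the fibrewise control and monicity. Write $P=a_D(x)\prod_j\bigl(y-g_j(x)\bigr)$ over $\overline{\C(x)}$. The computation behind~\eqref{eq:non-neg} gives
\[
\m_f(P)=\m_f(a_D)+\int_{\J_f}\sum_j p_{\mu_f}\!\bigl(g_j(z)\bigr)\,d\mu_f(z),
\]
and both terms are nonnegative (Propositions~\ref{prop:convergence} and~\ref{potential-nonnegative}); so $\m_f(a_D)=0$ and, for $\mu_f$-almost every $z\in\J_f$, every root of $P(z,\,\cdot\,)$ lies in $\K_f$. Viewing $P(z,\,\cdot\,)$ as defining $D$ points of $\PP^1$ depending continuously on $z$, and using that $\J_f=\operatorname{supp}\mu_f$, so that a nonempty relatively open subset of $\J_f$ has positive $\mu_f$-measure, the set of $z\in\J_f$ having a root outside the compact set $\K_f$ is relatively open and $\mu_f$-null, hence empty: for \emph{every} $z\in\J_f$, all roots of $P(z,\,\cdot\,)$ lie in $\K_f$. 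In particular $a_D$ has no zero on $\J_f$, since a zero of $a_D$ would produce a root at $\infty\notin\K_f$. But $\m_f(a_D)=0$ makes every zero of $a_D$ preperiodic by Lemma~\ref{lem:dynamicalKronecker}, and $\PrePer(f)\subseteq\J_f$ then puts every such zero on $\J_f$; so $a_D$ has no zeros at all, i.e.\ $a_D=\pm1$ and $P$ is monic in $y$ up to sign.

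Finally we extract the points. The preperiodic points of $f$ are dense in $\J_f$ (classical; see~\cite{Fatou,Julia}), so there are infinitely many of them, and each, together with all of its Galois conjugates (again preperiodic, as $f\in\Q[z]$), is an algebraic integer lying in $\J_f$. Fix such an $\alpha$. As $\{P=0\}$ has no vertical component, $P(\alpha,y)\not\equiv0$, and it is monic in $y$ up to sign with coefficients in $\overline{\Z}$, so each root $\beta$ is an algebraic integer; moreover $\beta\in\K_f$ by the fibrewise statement, since $\alpha\in\J_f$. For each $\sigma\in\Gal(\overline{\Q}/\Q)$, the conjugate $\sigma\beta$ is a root of $P(\sigma\alpha,y)$ with $\sigma\alpha\in\PrePer(f)\subseteq\J_f$, hence $\sigma\beta\in\K_f$; thus $p_{\mu_f}$ vanishes at every conjugate of $\beta$, and the monic integer minimal polynomial $m_\beta$ satisfies $\m_f(m_\beta)=\sum_\sigma p_{\mu_f}(\sigma\beta)=0$, so $\beta$ is preperiodic by Lemma~\ref{lem:dynamicalKronecker}. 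Therefore $(\alpha,\beta)\in\{P=0\}$ with both coordinates preperiodic, and letting $\alpha$ range over the infinitely many preperiodic points of $\J_f$ yields infinitely many such points.

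The heart of the argument, and the step I expect to require the most care, is the almost-everywhere-to-everywhere upgrade on $\J_f$ and its consequence $a_D=\pm1$: this is precisely the place where the hypothesis $\PrePer(f)\subseteq\J_f$ substitutes for the Dynamical Lehmer's Conjecture used in~\cite{TwoVarPoly}. Secondary care is needed in the reduction step to track components versus factors via Gauss's Lemma and to handle the algebraic functions $g_j$ near their branch locus (which is why it is cleaner to argue with the roots of $P(z,\,\cdot\,)$ as a varying family of points of $\PP^1$ rather than with individual branches); once $P$ is known to be monic in $y$, the extraction of preperiodic points on $\{P=0\}$ is a routine application of the one-variable Dynamical Kronecker's Lemma.
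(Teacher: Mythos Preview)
Your proof is correct and follows the same overall arc as the paper's: split off any $\Z[x]$-factor (vertical lines through preperiodic $x$), use \eqref{eq:non-neg} to get $\m_f(a_D)=0$ together with an almost-everywhere fibrewise statement, upgrade to an everywhere statement on $\J_f$ via continuity and $\operatorname{supp}\mu_f=\J_f$, and finish with a Galois-orbit application of one-variable Dynamical Kronecker.

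The one genuine packaging difference is in how monicity in $y$ is obtained. The paper's Lemma~\ref{leading-coefficient}(iii) argues directly that $a_D(x)\mid P(x,y)$: if some root $\alpha$ of $a_D$ has $P(\alpha,y)\not\equiv0$, a branch $g_i$ has a pole there, and since $\alpha\in\PrePer(f)\subseteq\J_f$ the potential $p_{\mu_f}(g_i(x))$ exceeds $1$ on a neighbourhood of $\alpha$ of positive $\mu_f$-measure, contradicting \eqref{eq:non-neg}. You instead reduce first to coprime $\C[x]$-coefficients, then view $P(z,\cdot)$ as a continuously varying degree-$D$ divisor on $\PP^1$: your everywhere statement forces this divisor into $\K_f$ for all $z\in\J_f$, while a zero of $a_D$ on $\J_f$ would place a point at $\infty\notin\K_f$. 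This compactification cleanly merges the paper's Lemmas~\ref{leading-coefficient} and~\ref{monic-case} into a single continuity argument and sidesteps any discussion of branches of the $g_i$ near their poles; the paper's version, on the other hand, isolates the structural divisibility $a_D\mid P$ without first needing the content reduction. Your final step, computing $\m_f(m_\beta)=\sum_\sigma p_{\mu_f}(\sigma\beta)$, is just the paper's product $\prod_i P_{\alpha_i}\in\Z[y]$ viewed rootwise.
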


\begin{remk}The property $\PrePer(f) \subseteq \J_f$ is a strong assumption. In Section \ref{sec:J=K} we discuss some conditions that guarantee that this property is satisfied, and so Dynamical Kronecker's Lemma holds unconditionally for $f$.  
\end{remk}

Before proving Theorem \ref{mahler-preperiodic}, we consider two lemmas. 

\begin{lem}\label{leading-coefficient}
Assume  that $\PrePer(f) \subseteq \J_f$.
	If $P(x, y) = a(x)y^k + (\text{lower order terms in $y$}) \in \Z[x, y]$ is a two-variable polynomial with $\m_f(P) = 0$, then 
	\begin{enumerate}[(i)]
		\item $\m_f(a) = 0$;
		\item the polynomial $a(x) \in \Z[x]$ is primitive (the gcd of its coefficients is 1);
		\item $a(x)$ divides the polynomial $P(x, y)$ (in $\Z[x, y]$).
	\end{enumerate}
\end{lem}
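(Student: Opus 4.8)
The plan is to exploit the dynamical Jensen formula together with the nonnegativity of the potential $p_{\mu_f}$ to show that each piece of $\m_f(P)$ must separately vanish, and then to combine the hypothesis $\PrePer(f) \subseteq \J_f$ with the dynamical Kronecker's lemma in one variable (Lemma~\ref{lem:dynamicalKronecker}) to extract the structural conclusions about $a(x)$.

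First I would prove (i). Factor $P$ over $\C(x)$ in the variable $y$ as $a(x)\prod_{j=1}^k\bigl(y - g_j(x)\bigr)$ for algebraic functions $g_j$, exactly as in the proof of Proposition~\ref{prop:convergence}. Integrating over $\J_f^2$ and applying the dynamical Jensen formula (Proposition~\ref{dynamical-jensen}) in the $y$-variable first gives
\[
\m_f(P) = \m_f(a) + \int_{\J_f}\sum_{j=1}^k p_{\mu_f}\bigl(g_j(z)\bigr)\,d\mu_f(z).
\]
By Proposition~\ref{potential-nonnegative}, both $\m_f(a)$ (which is $\ge 0$ because $a\in\Z[x]$ is a nonzero integer polynomial, by Proposition~\ref{prop:convergence}) and the second integral are nonnegative; since their sum is $0$, each vanishes. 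In particular $\m_f(a)=0$, giving (i). It is worth noting that this step does not yet use $\PrePer(f)\subseteq\J_f$.

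Next, (ii) and (iii). For (ii), I would argue by contradiction: if $a(x) = c\,\tilde a(x)$ with $c\in\Z$, $|c|>1$, and $\tilde a$ primitive, then $\m_f(a) = \log|c| + \m_f(\tilde a) \ge \log|c| > 0$ (using Lemma~\ref{products} and $\m_f(\tilde a)\ge0$), contradicting (i); hence $a$ is primitive. For (iii), apply the one-variable Dynamical Kronecker's Lemma to $a\in\Z[x]$: since $\m_f(a)=0$, we have $a(x) = \pm\prod_i (x - \alpha_i)$ with every $\alpha_i$ a preperiodic point of $f$. Here is where the hypothesis enters: $\PrePer(f)\subseteq\J_f$ forces every root $\alpha_i$ of $a$ to lie on $\J_f$. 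Now return to the displayed identity: the second integral $\int_{\J_f}\sum_j p_{\mu_f}(g_j(z))\,d\mu_f(z)$ is zero, its integrand is nonnegative (Propositions~\ref{potential-continuous}, \ref{potential-nonnegative}) and continuous away from the poles of the $g_j$, so $\sum_j p_{\mu_f}(g_j(z)) = 0$ for $\mu_f$-almost every $z\in\J_f$, hence (by continuity on the complement of a measure-zero set, together with the fact that $\mu_f$ has full support on $\J_f$) for every $z\in\J_f$ away from the poles; thus each $g_j(z)\in\K_f$ for such $z$. I would then leverage this: the $g_j$ are the $y$-roots of $P(x,\cdot)$, and I want to show $a(x)\mid P(x,y)$, i.e. that each root $\alpha_i$ of $a(x)$ is not a pole of all the $g_j$ — equivalently that plugging $x=\alpha_i$ into $P(\alpha_i,y)$ does not make the leading coefficient vanish without the whole polynomial vanishing. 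The cleanest route: $a(x)$ divides $P(x,y)$ in $\Z[x,y]$ iff $a(x)$ divides every coefficient $c_\ell(x)$ of $P(x,y)=\sum_\ell c_\ell(x)y^\ell$; since $a$ is primitive, by Gauss's Lemma this is equivalent to divisibility in $\Q[x,y]$, i.e. to each root $\alpha_i$ of $a$ (with multiplicity) being a common root of all $c_\ell(x)$, i.e. $P(\alpha_i, y)\equiv 0$. So I must show $P(\alpha_i,y)$ is identically zero for each root $\alpha_i$ of $a$.

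The main obstacle is precisely this last point — showing $P(\alpha_i,y)\equiv 0$ for each root $\alpha_i$ of $a(x)$. The idea is: if $P(\alpha_i,y)\not\equiv 0$, then $\alpha_i$ is a pole of some subset of the $g_j$ and a finite value for the others, and as $z\to\alpha_i$ along $\J_f$ the "finite-value" branches $g_j(z)$ stay in $\K_f$ (which is compact) while the "pole" branches escape to $\infty\in F_\infty$, so $p_{\mu_f}(g_j(z))\to\infty$ for those — but then $\int_{\J_f}\sum_j p_{\mu_f}(g_j(z))\,d\mu_f(z)$ would have to be strictly positive (one must check the blow-up is not integrably cancellable, which it cannot be since all terms are $\ge0$), contradicting its vanishing. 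Making this rigorous requires care about the behavior of the algebraic functions $g_j$ near $\alpha_i$ and a lower bound on $p_{\mu_f}(w) = g_{F_\infty}(w,\infty)$ as $w\to\infty$ (which grows like $\log|w|$), together with the fact that $\alpha_i\in\J_f$ has positive $\mu_f$-measure in every neighborhood since $\mu_f$ has full support on $\J_f$. I expect the write-up to hinge on quantifying this local blow-up argument carefully.
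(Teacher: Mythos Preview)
Your proposal is correct and follows essentially the same route as the paper: the same decomposition $\m_f(P)=\m_f(a)+\int_{\J_f}\sum_j p_{\mu_f}(g_j)\,d\mu_f$ from the proof of Proposition~\ref{prop:convergence} handles (i) and (ii), and for (iii) the paper likewise argues by contradiction that if $a\nmid P$ then some $g_j$ has a pole at a root $\alpha$ of $a$, uses Dynamical Kronecker plus the hypothesis $\PrePer(f)\subseteq\J_f$ to place $\alpha\in\J_f$, and obtains a contradiction since $p_{\mu_f}(g_j)$ is bounded below by a positive constant on a neighborhood of $\alpha$ of positive $\mu_f$-measure. The ``local blow-up'' step you anticipate is made precise in the paper via a Puiseux expansion $g_j(x)=(x-\alpha)^{-r}+\cdots$ giving $p_{\mu_f}(g_j(x))>1$ on a small neighborhood $U$ of $\alpha$, combined with $\mu_f(U\cap\J_f)>0$.
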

\begin{proof}
	(i) This result follows from the equality case of Proposition~\ref{prop:convergence}. In particular, from
	equation~\eqref{eq:non-neg} we see that $\m_f(P)$ can be zero if and only if the two (non-negative)  summands are both zero, one of which corresponds to $\m_f(a)$ in this two-variable case.
		
	(ii) This follows from (i) and the fact that non-primitive polynomials have positive dynamical Mahler measure.
	
	(iii)
	It is enough to show that $a(x)$ divides $P(x, y)$ in $\C[x, y]$, since if this is the case, the polynomial $P(x, y)/a(x)$ must have rational coefficients, and by Gauss's Lemma, the coefficients must also be integers.

    We argue by contradiction, somewhat in the style of \cite[Lemma~3.20]{Everestward}.  Suppose not: then there is a  root $\alpha$ of $a(x)$ such that $P(\alpha, y)$ is not identically~0.  By the single-variable Dynamical Kronecker's Lemma (Lemma~\ref{lem:dynamicalKronecker}), since $\m_f(a) = 0$, the roots of $a(x)$ are in $\PrePer(f)$. By assumption, they are in $\J_f$.
      
	As in the proof of Proposition~\ref{prop:convergence}, we have a factorization
	\[
	P(x, y) = a(x) \prod_{i = 1}^k (y - g_i(x)),
	\]
	where the $g_i$ are algebraic functions which may have branch cuts or singularities.  In particular, plugging in the $\alpha$ above, we see that some $g_i(x)$ must have a pole at $x = \alpha$. We will show this leads to a contradiction.

	We can decompose the Mahler measure of $P$ as
	\[
	\m_f(P) = \m_f(a) + \sum_{i = 1}^k \int_{\J_f} p_\mu (g_i(x)) d\mu_f(x).
	\]
	
	Since all summands are non-negative, in order to have $\m_f(P) = 0$ we must have $\int_{\J_f} p_\mu(g_i(x)) d\mu_f(x)= 0$ for each $i$.
	
	On the other hand, if $g_i$ has a pole at $\alpha$ of order $r \in \Q$, we have a power series expansion
	\[
	g_i(x) = (x- \alpha)^{-r} + \dotsb 
	\]
	where the first term dominates near $x = \alpha$, so there must be some neighborhood $U$ of $\alpha$ such that $p_\mu(g_i(x)) > 1$ for $x \in U$. 
	
	Then, 
	
	\[
	0 = \int_{\J_f} p_\mu(g_i(x)) d\mu_f(x) \ge \int_{U \cap \J_f} p_\mu(g_i(x)) d\mu_f(x)\ge \int_{U \cap \J_f} d\mu_f(x)= \mu_f(U \cap \J_f) > 0,
	\]
	which gives the desired  contradiction. (The last step uses the fact that $\alpha$ lies in the support $\J_f$ of $\mu$.)
\end{proof}

After factoring out the leading term, we are reduced to considering the case when 
\[
P(x, y) = y^k + (\text{lower order terms in $y$})
\]
 as a monic polynomial in $y$.

\begin{lem}\label{monic-case}
	If $P(x, y) = y^k + (\text{lower order terms in $y$}) \in \Z[x, y]$ is a two-variable polynomial, monic in~$y$, with $\m_f(P) = 0$, then for any $\alpha \in \mathcal{J}_f$ the polynomial $P_{\alpha} \in \C[y]$ given by $P_{\alpha}(y) = P(\alpha, y)$ satisfies $\m_f(P_{\alpha}) = 0$.
\end{lem}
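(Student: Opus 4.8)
The plan is to combine the dynamical Jensen formula (Proposition~\ref{dynamical-jensen}) with the non-negativity and continuity of the potential $p_{\mu_f}$. Since $P$ is monic in $y$, I would first factor it over an algebraic closure of $\C(x)$ as
\[
P(x, y) = \prod_{i=1}^{k} \bigl(y - g_i(x)\bigr),
\]
where the $g_i$ are algebraic functions of $x$. Because $P$ is monic in $y$ with coefficients in $\C[x]$, each $g_i$ is integral over $\C[x]$, hence has no poles at finite points; so the multiset $\{g_i(\alpha)\}$ makes sense for \emph{every} $\alpha \in \C$, and in particular for every $\alpha \in \J_f$. This is the point where monicity in $y$ is essential (contrast with the proof of Lemma~\ref{leading-coefficient}, where poles of the $g_i$ are precisely what had to be ruled out).

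Next I would apply Proposition~\ref{dynamical-jensen} to the single-variable polynomial $z_2 \mapsto P(z_1, z_2)$, which has leading coefficient $1$, to get, for each fixed $z_1$,
\[
\int_{\J_f} \log|P(z_1, z_2)|\, d\mu_f(z_2) = \sum_{i=1}^k p_{\mu_f}\bigl(g_i(z_1)\bigr).
\]
Integrating over $z_1$ (this is exactly equation~\eqref{eq:non-neg} in the proof of Proposition~\ref{prop:convergence}, specialized to two variables with unit leading coefficient) gives $\m_f(P) = \int_{\J_f} \varphi(z_1)\, d\mu_f(z_1)$ where $\varphi(z_1) := \sum_{i=1}^k p_{\mu_f}(g_i(z_1))$. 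By Proposition~\ref{potential-nonnegative} (here using that $f$ is monic) the function $\varphi$ is non-negative on $\C$, and, as noted in the proof of Proposition~\ref{prop:convergence}, although the individual branches $g_i$ need not be continuous, the unordered tuple $\{g_i(z_1)\}$ varies continuously with $z_1$; combined with the continuity of $p_{\mu_f}$ (Proposition~\ref{potential-continuous}) and the fact that $\varphi$ is a symmetric function of the $g_i$, this shows $\varphi$ is continuous on $\J_f$.

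The key step is then to upgrade the immediate conclusion ``$\varphi = 0$ $\mu_f$-almost everywhere on $\J_f$'', which follows since $\varphi \ge 0$ and $\int_{\J_f}\varphi\, d\mu_f = \m_f(P) = 0$, to ``$\varphi \equiv 0$ on $\J_f$'': a non-negative continuous function whose integral against the probability measure $\mu_f$ vanishes must be identically zero on $\operatorname{supp}(\mu_f)$, and the support of the equilibrium measure of a polynomial is the whole Julia set $\J_f$. Hence $\varphi(\alpha) = 0$ for every $\alpha \in \J_f$. Finally, for such an $\alpha$ the polynomial $P_\alpha(y) = \prod_{i=1}^k (y - g_i(\alpha))$ is monic, so a second application of Proposition~\ref{dynamical-jensen} yields $\m_f(P_\alpha) = \sum_{i=1}^k p_{\mu_f}(g_i(\alpha)) = \varphi(\alpha) = 0$, as claimed.

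The main obstacle is precisely this passage from ``almost everywhere'' to ``everywhere'': it relies on the continuity of $\varphi$, which in turn rests on the mildly delicate fact that the root multiset $\{g_i(z_1)\}$ depends continuously on $z_1$ even across branch points, and on the fact that $\operatorname{supp}(\mu_f) = \J_f$. Both ingredients are already available in the paper (the latter is the standard property of equilibrium/maximal-entropy measures on polynomial Julia sets, used implicitly in the last line of the proof of Lemma~\ref{leading-coefficient}), so once they are cited the argument is short.
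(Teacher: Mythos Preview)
Your proposal is correct and follows essentially the same route as the paper: define $\varphi(\alpha)=\m_f(P_\alpha)=\sum_i p_{\mu_f}(g_i(\alpha))$ via the dynamical Jensen formula, observe that $\varphi\ge 0$ and $\int_{\J_f}\varphi\,d\mu_f=\m_f(P)=0$, and then pass from ``$\varphi=0$ $\mu_f$-a.e.'' to ``$\varphi\equiv 0$ on $\J_f$'' using continuity of $\varphi$ together with $\operatorname{supp}(\mu_f)=\J_f$. Your remark that monicity in $y$ forces the $g_i$ to be integral over $\C[x]$, hence pole-free, is a helpful clarification that the paper leaves implicit.
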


\begin{proof}
	The polynomial $P_{\alpha} \in \C[y]$ is monic, so  $\m_f(P_{\alpha}) \ge 0$ for all $\alpha$.
	
	However,
\begin{equation}\label{eq:intzero}
	0 = \m_f(P) = \int_{\mathcal{J}_f} \m_f(P_\alpha) d\mu_f(\alpha),
	\end{equation}
	from which we can immediately deduce that $\m_f(P_\alpha) = 0$ for almost all $\alpha\in \mathcal{J}_f$ (that is, except possibly on a set of invariant measure $0$).
	
 Next we prove that $\m_f(P_\alpha)$ is a continuous function of $\alpha$. To do this, write 
		$P(\alpha,y) = \prod_{i} (y-g_i(\alpha))\in \C[y]$, where as above the $g_i$ are algebraic functions.
By Jensen's formula,
\[
\m_f(P_\alpha) =  \sum_{i} p_{\mu_f}(g_i(\alpha)).
\]
By Proposition~\ref{potential-continuous}, $p_{\mu_f}$ is continuous, and therefore $\m_f(P_\alpha) $ is continuous as a function of $\alpha\in \C$.

Since the support of $\mu_f$ is exactly $\J_f$ (as discussed in the proof of~\cite[Theorem 6.5.8]{Ransford} or ~\cite[Theorem 2, page 169]{Steinmetz}), equation \eqref{eq:intzero} then implies that $\m_f(P_\alpha) = 0$ for every $\alpha \in \J_f$. 
\end{proof}

\begin{proof}[Proof of Theorem~\ref{mahler-preperiodic}]
	Write $P(x, y) = a(x)y^k + (\text{lower order terms in $y$}) \in \Z[x, y]$.
	
	Case 1: $a(x)$ is not constant.  By the single-variable Dynamical Kronecker's Lemma (Lemma~\ref{lem:dynamicalKronecker}), the roots of $a(x)$ are preperiodic for $f$, and by Part (iii) of Lemma~\ref{leading-coefficient}, $P(x, y)$ contains the vertical lines through those preperiodic $x$-coordinates.
	
	Case 2:  $a(x)$ is constant, so it equals $\pm 1$ by Part (i) 
	of Lemma~\ref{leading-coefficient}.  By flipping the sign as necessary, we may assume that $a(x) = 1$, so we are in the situation of Lemma~\ref{monic-case}.
	
	Now let $\alpha$ be any preperiodic point for $f$, and let its Galois conjugates be $\alpha= \alpha_1, \alpha_2, \dotsc, \alpha_n$.  Consider the polynomial $P_{\alpha_1}P_{\alpha_2} \dotsm P_{\alpha_n} \in \C[y]$: this has algebraic integer coefficients since $\alpha$ is an algebraic integer (here we are crucially using the fact that $f$ is monic) and $P \in \Z[x, y]$, and in fact rational integer coefficients since it is invariant under the Galois action.  Since  $\alpha_1, \dotsc, \alpha_n \in \PrePer(f)$, by our assumption they are also in $\J_f$, and by Lemma~\ref{monic-case}, $\m_f(P_{\alpha_1}P_{\alpha_2} \dotsm P_{\alpha_n}) = 0$.

	From the single-variable Dynamical Kronecker's Lemma, we conclude that all roots of $P_{\alpha_1}P_{\alpha_2} \dotsm P_{\alpha_n}$ are preperiodic for $f$.  Hence any point on the curve $P(x, y) = 0 $ with $x$-coordinate $\alpha = \alpha_1$ also has preperiodic $y$-coordinate. Since $P(x, y)$ is monic in $y$, the polynomial $P(\alpha, y)$ is nonzero for any such $\alpha$, and thus there is some $\beta$ for which $P(\alpha, \beta) = 0$. Since there are infinitely many choices for the periodic point $\alpha$ we get infinitely many points with both coordinates preperiodic.
\end{proof}

\section{Conditions for the preperiodic points of $f$ to lie in the Julia set $\J_f$} \label{sec:J=K}
Given the results of Section~\ref{sec:ad}, it is natural to ask how restrictive the hypothesis is that $\PrePer(f) \subseteq \J_f$. This seems to be a delicate question in general. 
In the case of unicritical polynomials --- polynomials with a unique critical point $\gamma \in \C$ --- we can answer the question completely.
We begin with some background on the connection between Julia sets, filled Julia sets, and periodic points. 

For  $f \in \C[z]$, a  fixed point $z_0$ is 
\begin{itemize}
\item
{\bf repelling} if $|f'(z_0)|>1$, 
\item
{\bf neutral} if $|f'(z_0)|=1$, and 
\item
{\bf attracting} if $|f'(z_0)|<1$. 
\end{itemize}
If $|f'(z_0)|>1$, then the  image under $f$ of a small neighborhood around $z_0$ expands so that $z_0$ ``repels'' nearby points. If $|f'(z_0)|<1$, the image under $f$ of small neighborhood around $z_0$ shrinks, so that $z_0$ ``attracts'' nearby points. The number $ f'(z_0)$ is called the {\bf multiplier} of the fixed point.
These ideas generalize to $n$-cycles by considering points on the cycle as fixed points of the iterated polynomial $f^n$. So to study an $n$-cycle containing the point $z_0$ and determine whether it is repelling, neutral, or attracting, we consider the absolute value of  
\[
\left.\frac{d f^n}{d z}\right|_{z=z_0} = \prod_{z_i \text{ on the cycle}} f'(z_i).
\]
 The equality comes from applying the chain rule to the derivative of $f^n$.

As noted in Section~\ref{sec:ArithDynIntro}, all preperiodic points of a polynomial $f$ lie in the filled Julia set $\K_f$. But in fact more is true: The Julia set $\J_f$ is the closure of the repelling periodic points of $f$~\cite[Theorem 6.9.2]{beardon2000iteration}. Since the Julia set $\J_f$ is completely invariant under $f$---meaning that $f(\J_f) = \J_f = f^{-1}(\J_f)$~\cite[Theorem 3.2.4]{beardon2000iteration}---we need only  
 determine when the nonrepelling 
cycles lie in the Julia set. 

Notice that it suffices to check this when $\J_f \subsetneq \K_f$, since in the case of $\J_f=\K_f$, the preperiodic points lie in $\J_f$ trivially. Observe that the condition  $\J_f=\K_f$ is guaranteed if $\J_f$ is totally disconnected. 

The following two results will be useful in the sequel.

\begin{thm}~\cite[Theorem 1.35 (a)]{Silverman-arithmetic-dynamical}\label{thm:silverman} Let $f(z) \in \C[z]$ be a polynomial of degree $d\geq 2$. 
Then $f$ has at most $d-1$ nonrepelling periodic cycles in $\C$. 
\end{thm}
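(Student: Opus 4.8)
The plan is to reduce the statement to the classical theorem of Fatou bounding the number of nonrepelling cycles by the number of critical points, using the special geometry of a polynomial at $\infty$ to obtain the sharp constant $d-1$ rather than the generic rational-map bound $2d-2$. First I would record the critical data: viewed as a degree-$d$ self-map of $\PP^1(\C)$, $f$ has $2d-2$ critical points counted with multiplicity by Riemann--Hurwitz; the point $\infty$ is a fixed point at which $f$ has local degree $d$, hence a critical point of multiplicity $d-1$ and a superattracting fixed point, while the remaining $d-1$ critical points are the roots of $f'$ and all lie in $\C$. The immediate basin of the cycle $\{\infty\}$ is a Fatou component containing $\infty$, so it already accounts for the multiplicity-$(d-1)$ critical point there, and the available critical data for cycles in $\C$ is exactly the $d-1$ finite critical points.

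The heart of the argument is to attach to each nonrepelling cycle $\mathcal{C}\subseteq\C$ a finite critical point $c(\mathcal{C})$ so that $\mathcal{C}\mapsto c(\mathcal{C})$ is injective, finite critical points being counted with multiplicity. For an attracting cycle this is the standard fact that the return map $f^n$ on a cyclic Fatou component cannot be an unramified self-covering --- such a map would be a non-contracting hyperbolic isometry, contradicting attraction --- so the immediate basin of the cycle carries a critical point; immediate basins of distinct cycles are disjoint, and that of $\{\infty\}$ is disjoint from all the finite ones, forcing a finite attracting cycle to consume one of the $d-1$ finite critical points. Parabolic cycles (multiplier a root of unity) are handled the same way via the Leau--Fatou flower, the cycle of attracting petals absorbing a critical point in a component disjoint from the other basins. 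The remaining case --- an irrationally indifferent (Siegel or Cremer) cycle, which has no basin --- is treated by Fatou's perturbation argument: deform $f$ inside the one-parameter family $f_t(z)=f(z)+t$, which shares its finite critical points and its superattracting point at $\infty$ with $f$, arrange that the cycle becomes attracting for suitable small $t\neq 0$, and pass to the limit $t\to 0$ to recover an associated finite critical point.

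Granting this assignment, the count is immediate: there are $d-1$ critical points of $f$ in $\C$ with multiplicity, and the injection $\mathcal{C}\mapsto c(\mathcal{C})$ shows that $f$ has at most $d-1$ nonrepelling periodic cycles in $\C$. The main obstacle is the irrationally indifferent case: one must show such a cycle can be made attracting by an arbitrarily small perturbation (which needs the multiplier to be a non-constant holomorphic function of the deformation parameter, possibly requiring a family larger than $f+t$) and that the critical point attached to the perturbed attracting cycle stays in a compact region and survives the limit while the assignment remains injective --- a normal-families argument together with upper semicontinuity of critical-point counts. This is exactly the difficult core of Fatou's theorem, and in the survey we simply invoke it, citing \cite[Theorem 1.35(a)]{Silverman-arithmetic-dynamical}.
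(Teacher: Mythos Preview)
The paper does not supply its own proof of this theorem; it is quoted verbatim from \cite[Theorem~1.35(a)]{Silverman-arithmetic-dynamical} as a black-box input and then applied in Section~\ref{sec:J=K}. There is therefore no argument in the paper to compare your sketch against. Your outline is a faithful summary of the standard Fatou-type proof underlying that citation --- reducing to the critical-point count, observing that the superattracting fixed point at $\infty$ absorbs $d-1$ of the $2d-2$ critical points so that only $d-1$ finite critical points remain, and handling irrationally indifferent cycles by perturbation --- and you correctly flag the indifferent case as the delicate step that is ultimately being cited rather than reproved.
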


\begin{thm}\cite[Corollary 8.2]{Sutherland} \label{thm:multiplierunity} If $z_0$ is a periodic point with multiplier $\left.\frac{d f^n}{d z}\right|_{z=z_0}=\lambda$ a root of unity, then $z_0\in \J_f$. 
\end{thm}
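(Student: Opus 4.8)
The plan is to reduce to the case of a fixed point with multiplier exactly $1$ (the parabolic case), and then to contradict normality by examining the Taylor coefficients of the iterates at $z_0$. Write $n$ for the period of $z_0$ and set $g = f^n$, so that $z_0$ is a fixed point of $g$ with $g'(z_0) = \lambda$. Since $\lambda$ is a root of unity, $\lambda^q = 1$ for some $q \ge 1$, and then $F := f^{nq} = g^q$ fixes $z_0$ with $F'(z_0) = \lambda^q = 1$. Because $\J_{f^k} = \J_f$ for every $k \ge 1$ (the Fatou set, defined by normality of the family of iterates, is unchanged on passing to an iterate), it suffices to prove $z_0 \in \J_F$.

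Next I would pass to a local coordinate centered at $z_0$ and write $F(z) = z_0 + (z - z_0) + a\,(z-z_0)^{k+1} + \cdots$ with $a \ne 0$ and $k \ge 1$: such an expansion exists because $F$, being a polynomial of degree $\ge 2$, is not the identity near $z_0$, while $F'(z_0) = 1$ forces the linear coefficient to be $1$ and kills the possibility of a lower-order correction. A straightforward induction then gives $F^m(z) = z_0 + (z-z_0) + ma\,(z-z_0)^{k+1} + \cdots$; that is, the $(k+1)$-st Taylor coefficient of $F^m$ at $z_0$ is exactly $ma$.

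Finally, suppose for contradiction that $z_0$ lies in the Fatou set of $F$, so that $\{F^m\}$ is a normal family on some open neighborhood $U$ of $z_0$. Since $F^m(z_0) = z_0$ for every $m$, no subsequence can tend locally uniformly to $\infty$, so after passing to a subsequence $F^{m_j} \to \phi$ locally uniformly on $U$ with $\phi(z_0) = z_0 \in \C$; after shrinking $U$, $\phi$ is holomorphic and $\C$-valued, and by the Cauchy estimates its Taylor coefficients at $z_0$ are finite and equal the limits of those of $F^{m_j}$. But the $(k+1)$-st coefficient of $F^{m_j}$ is $m_j a \to \infty$, a contradiction. Hence $z_0 \in \J_F = \J_f$.

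The main obstacle is purely a matter of bookkeeping: confirming the standard fact $\J_{f^k} = \J_f$ and verifying that normality at the fixed point $z_0$ genuinely produces a finite locally uniform limit to which the Cauchy estimates apply. An alternative, slightly more classical argument would instead invoke the Leau--Fatou flower theorem: an attracting petal of $F$ at $z_0$ meets every neighborhood of $z_0$ and $F^m \to z_0$ on it, so any locally uniform limit $\phi$ of the $F^{m_j}$ would be $\equiv z_0$ by the identity theorem, forcing $\phi'(z_0) = 0$, whereas $(F^m)'(z_0) = (F'(z_0))^m = 1$ forces $\phi'(z_0) = 1$; the Taylor-coefficient version above is essentially the same contradiction made elementary.
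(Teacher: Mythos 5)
Your proof is correct. The paper does not give a proof of this statement—it is cited as a known result (Sutherland, Corollary~8.2)—and the argument you give is the standard one for that result (also found in Milnor, Beardon, Carleson--Gamelin): reduce to a parabolic fixed point with multiplier exactly $1$ by passing to the iterate $F = f^{nq}$ (using $\J_{f^k} = \J_f$), expand $F(z) = z_0 + (z-z_0) + a(z-z_0)^{k+1} + \cdots$ with $a \neq 0$ (possible since a polynomial of degree $\geq 2$ is not the identity), check by induction that the $(k+1)$-st Taylor coefficient of $F^m$ at $z_0$ is $ma$, and then observe that normality of $\{F^m\}$ near $z_0$ would force a locally uniform limit $\phi$ (finite at $z_0$, hence holomorphic after shrinking) whose $(k+1)$-st coefficient is the limit of $m_j a$, which diverges. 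All the steps, including the handling of the case where the limit might be $\infty$ and the convergence of Taylor coefficients via the Cauchy integral formula, are sound. Your Leau--Fatou flower alternative is also a valid route to the same contradiction.
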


\subsection{The degree-2 case} We start by considering the case of $\deg(f)=2$. In this case, we can completely classify monic integer polynomials that fail to have all preperiodic points in the Julia set.

\begin{prop}\label{prop:preperjulia2} Let $f\in \Z[z]$ be monic and quadratic. Then $\PrePer(f) \subseteq\J_f$ unless $f$ is affine conjugate over $\Z$  to either $z^2$ or  $z^2-1$.
 \end{prop}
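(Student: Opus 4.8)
The plan is to reduce to the normal form $f(z) = z^2 + c$ with $c \in \Z$, since every monic quadratic in $\Z[z]$ is affine conjugate over $\Z$ to such a polynomial (complete the square; the conjugating map $L(z) = z - \tfrac{b}{2}$ stays over $\Z$ when we track parity, or more carefully one works with $f(z)=z^2+bz+c$ and conjugates by $z\mapsto z+\tfrac{1-b}{2}$ only in the cases where this lands in $\Z$, treating the parity obstruction separately). With this reduction, $f$ has a unique critical point at $z = 0$, so by the Fatou--Julia dichotomy either the critical orbit is bounded and $\J_f$ is connected, or it is unbounded and $\J_f$ is totally disconnected. In the totally disconnected case $\J_f = \K_f$ and there is nothing to prove, so we may assume $0$ has bounded orbit, i.e.\ $0 \in \K_f$.

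The next step is to invoke the structural results already available: by Theorem~\ref{thm:silverman}, $f$ has at most $d - 1 = 1$ nonrepelling periodic cycle in $\C$, and by the remark following Theorem~\ref{thm:multiplierunity} combined with the fact that $\J_f$ is the closure of repelling cycles, it suffices to check whether the (at most one) nonrepelling cycle lies in $\J_f$. So the whole problem comes down to: for which $c \in \Z$ does $f(z) = z^2 + c$ have a nonrepelling cycle that lies \emph{outside} $\J_f$? A nonrepelling cycle outside $\J_f$ must lie in the interior of $\K_f$, hence be attracting or have a multiplier that is a root of unity (a neutral cycle whose multiplier is a root of unity lies in $\J_f$ by Theorem~\ref{thm:multiplierunity}, so that case is excluded; a Cremer or Siegel cycle would lie in $\J_f$ or bound a Siegel disc respectively — over $\Z$ I expect rationality/integrality constraints to rule these out, but this needs care). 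So I would argue that the bad case occurs exactly when $f$ has an \emph{attracting} cycle.

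Now I would pin down which $c \in \Z$ give an attracting cycle. The multiplier of a fixed point $z_0$ of $f$ is $f'(z_0) = 2z_0$, and for an $n$-cycle it is $2^n \prod z_i$. An attracting cycle forces the critical point $0$ into its immediate basin, and the product of the cycle elements — which are algebraic integers, being roots of $f^n(z) - z \in \Z[z]$ monic — has absolute value of a constrained algebraic number; requiring $|2^n \prod z_i| < 1$ with $\prod z_i$ an algebraic integer is extremely restrictive. For a fixed point: $z_0 = z_0^2 + c$, multiplier $2z_0$, attracting iff $|z_0| < \tfrac12$; the two fixed points are $\tfrac{1 \pm \sqrt{1 - 4c}}{2}$, and a short estimate shows $|z_0| < \tfrac12$ forces $c$ to lie in a tiny interval, yielding only $c = 0$ (multiplier $0$, superattracting fixed point at $0$: that is $z^2$). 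For a $2$-cycle: the cycle satisfies $z_1 + z_2 = -1$, $z_1 z_2 = c + 1$, multiplier $4 z_1 z_2 = 4(c+1)$, attracting iff $|4(c+1)| < 1$, i.e.\ $c + 1 = 0$ since $c \in \Z$, giving $c = -1$: that is $z^2 - 1$, the basilica, whose $2$-cycle $\{0, -1\}$ is superattracting. For cycles of length $n \ge 3$ I would show no integer $c$ produces an attracting cycle, e.g.\ by noting that the critical orbit $0 \mapsto c \mapsto c^2 + c \mapsto \cdots$ must be attracted to it, and for $|c|$ large the orbit escapes while for the finitely many small $|c|$ one checks directly that any attracting cycle has length $\le 2$; alternatively use that the generalized Mandelbrot-set hyperbolic components of period $\ge 3$ contain no integers, which can be made elementary here.

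\textbf{Main obstacle.} The delicate point is the clean dichotomy ``nonrepelling cycle outside $\J_f$ $\iff$ attracting cycle'' over $\Z$: I must rule out irrationally neutral cycles (Siegel or Cremer) for $f(z) = z^2 + c$ with $c \in \Z$. The multiplier $\lambda$ of a cycle is an algebraic number, a root of the resultant-type equation in $c$; for it to be on the unit circle but not a root of unity over a rational parameter requires an argument — most cleanly, one observes that for $c \in \Z$ the finitely many candidate cycles of each period have multipliers that are algebraic integers (or at least algebraic numbers with controlled denominators), and an algebraic integer all of whose conjugates have absolute value $1$ is a root of unity by Kronecker's Lemma itself, so Theorem~\ref{thm:multiplierunity} applies and such a cycle is in $\J_f$. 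Making this last step rigorous — controlling the multiplier's conjugates — is where the real work lies; the rest is the bounded case-check on small $|c|$, which is routine. I would also double-check the affine-conjugacy-over-$\Z$ reduction at the start, since the parity of $b$ in $z^2 + bz + c$ can obstruct completing the square over $\Z$ and may need to be folded into the statement's ``conjugate over $\Z$'' clause.
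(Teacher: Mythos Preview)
Your reduction step is the real gap, and you correctly flagged it at the end but underestimated it. Not every monic quadratic in $\Z[z]$ is affine conjugate over $\Z$ to some $z^2+c$: when the linear coefficient is odd, completing the square requires a half-integer shift, and the best you can do over $\Z$ is the second normal form $z^2+z+c$. The paper proves exactly this two-form reduction in Lemma~\ref{lem:conj2}. The $z^2+z+c$ family is not a technicality you can fold in later --- it is where the entire substance of the remaining argument lives. For $c=0,-1,-2$ these maps have genuinely \emph{neutral} (not attracting) cycles, with multipliers $1$, $-1$, and $1$ respectively, so your attracting-cycle analysis would say nothing about them; the paper disposes of them via Theorem~\ref{thm:multiplierunity}.

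Your proposed Kronecker-style argument for excluding irrationally neutral cycles does not work as stated: the Galois conjugates of a cycle's multiplier are the multipliers of the Galois-conjugate cycles, and there is no reason those should also lie on the unit circle, so ``all conjugates have absolute value $1$'' is not available. The paper avoids this issue entirely by a much more direct route: it uses the known fact $\mathcal{M}_2\cap\R=[-2,\tfrac14]$ to cut each family down to three integer values of $c$, and then simply computes the unique nonrepelling cycle in each of the six cases. In the $z^2+c$ family the cases are $c=0$ (superattracting fixed point), $c=-1$ (superattracting $2$-cycle), and $c=-2$ (Chebyshev, $\J_f=\K_f$); in the $z^2+z+c$ family the three cases all produce root-of-unity multipliers. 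So there is no need to argue abstractly about irrationally neutral cycles at all --- once you have the finite case list, direct computation plus Theorem~\ref{thm:multiplierunity} finishes it. Your multiplier computations for fixed points and $2$-cycles in the $z^2+c$ family are correct and match the paper's conclusions, but the hand-wave for $n\ge 3$ and the missing second family mean the proposal as written does not close.
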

 
 To prove Proposition~\ref{prop:preperjulia2}, we need a couple of tools. First, note that any quadratic polynomial $f\in \C[z]$ is affine conjugate over $\C$ to a polynomial of the form $z^2+c$ with $c \in \C$. To see this, choose a conjugating function $L(z) = \frac z a- \gamma$ where $a$ is the leading coefficient of $f$ and $\gamma$ is the unique  critical point of $f$. Then $f^L$ will be monic and have a critical point at zero, which gives it the desired form.
  
Let $f_c(z)=z^2+c$. The Mandelbrot set is defined as 
\[\mathcal{M}_2=\left\{ c \in \C : \sup |f_{c}^n(0)|<\infty\right\}.\]

\begin{center}
\begin{figure}[h]
 \includegraphics[scale=.3]{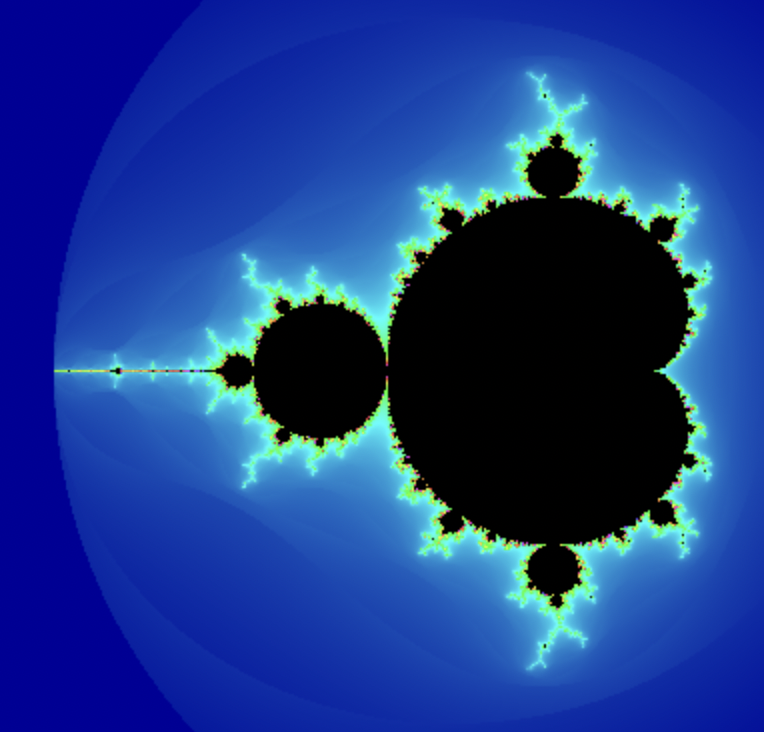} 
 \caption{The Mandelbrot set.}
 \label{Mandelbrot}
 \end{figure}
 \end{center}

The Mandelbrot set is contained in the disk of radius 2; furthermore,  $\mathcal{M}_2$ satisfies $\mathcal{M}_2\cap \R=\left[-2,\frac{1}{4}\right]$ (see~\cite[Chapter VIII, Theorem 1.2]{carleson2013complex}). 
It follows from the discussion of Julia sets in Section~\ref{sec:ArithDynIntro} that  for $c \not\in \mathcal M_2$, the Julia set for $f_c$ is totally disconnected and $\PrePer(f)\subseteq\J_f$.

The Mandelbrot set completely classifies conjugacy classes of complex quadratic  polynomials. In fact, the conjugacy described above  preserves the field of definition of a quadratic polynomial provided the field does not have characteristic~2. However, to prove Proposition \ref{prop:preperjulia2}, we need to understand conjugacy classes of monic integral quadratic polynomials, which is slightly more delicate.
\begin{lem}\label{lem:conj2}
Let $f\in\Z[z]$ be monic and quadratic. Then $f$ is affine conjugate over $\Z$  to an integral polynomial of the form $z^2 + c$ or $z^2 + z + c$.
\end{lem}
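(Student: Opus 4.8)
The plan is to reduce a general monic quadratic $f(z) = z^2 + bz + c \in \Z[z]$ to one of the two stated normal forms by an explicit affine conjugation defined over $\Z$, splitting into cases according to the parity of $b$. The conjugating maps will be translations $L(z) = z + t$ with $t \in \Z$ (or $t \in \tfrac12\Z$ in the delicate case), since translations automatically preserve monicity and the integrality of all coefficients when $t \in \Z$; we are allowed only $\Z$-conjugacy, so we cannot use the usual $\tfrac{z}{a} - \gamma$ trick that works over $\C$ (here $a = 1$ already, but the critical point $\gamma = -b/2$ need not be an integer).

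First I would compute $f^L(z) = L^{-1}(f(L(z)))$ for $L(z) = z+t$: this is $(z+t)^2 + b(z+t) + c - t = z^2 + (2t+b)z + (t^2 + bt + c - t)$. So conjugation by $L$ shifts the linear coefficient from $b$ to $b + 2t$. Hence the linear coefficient is only well-defined modulo $2$ under $\Z$-conjugacy by translations, and we get two cases. \textbf{Case $b$ even:} write $b = -2\gamma$ with $\gamma \in \Z$ (the critical point is now an integer), and take $t = \gamma$, i.e. $L(z) = z + \gamma$; then the linear coefficient becomes $b + 2\gamma = 0$, and $f$ is $\Z$-conjugate to $z^2 + c'$ for the resulting integer constant $c' = \gamma^2 + b\gamma + c - \gamma$. \textbf{Case $b$ odd:} we cannot kill the linear term over $\Z$, but we can normalize it to $1$: choose $t \in \Z$ so that $b + 2t = 1$, i.e. $t = (1-b)/2 \in \Z$ (valid precisely because $b$ is odd), giving $f$ $\Z$-conjugate to $z^2 + z + c''$ with $c'' = t^2 + bt + c - t \in \Z$.

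I would then note that these two families are genuinely the two cases (no further reduction is forced or needed for the lemma as stated), and remark that within each family one still has the residual freedom to conjugate by $z \mapsto z + 2s$ (even shift, in the $b$-odd case) to adjust $c$ — but since the lemma only claims \emph{existence} of such a normal form, this is not required. The main point to be careful about is simply verifying that every coefficient produced stays in $\Z$: in the even case this is clear since $\gamma \in \Z$; in the odd case it hinges on $(1-b)/2 \in \Z$, which holds exactly because $b$ is odd. I expect essentially no obstacle here — the only subtlety is recognizing that $\Z$-conjugacy is coarser than $\C$-conjugacy precisely in its inability to eliminate an odd linear coefficient, which is why the second normal form $z^2 + z + c$ must be allowed alongside $z^2 + c$.
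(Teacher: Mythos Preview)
Your proof is correct and takes essentially the same approach as the paper: both arguments write $f(z)=z^2+bz+c$, split on the parity of $b$, and conjugate by an integer translation to normalize the linear coefficient to $0$ or $1$. The only cosmetic difference is that the paper phrases it as finding $g$ with $f=g^L$ rather than computing $f^L$ directly, which amounts to using $L^{-1}$ in place of your $L$. (One small aside: your parenthetical about ``residual freedom to conjugate by $z\mapsto z+2s$'' is not quite right---a nonzero translation changes the linear coefficient---but as you note, this plays no role in the proof.)
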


\begin{proof}
 Let $f(z)=z^2+\alpha z+\beta \in \Z[z]$. We will find a linear polynomial $L(z) \in \Z[z]$ such that  
  \[f = g^L=L^{-1}\circ g \circ L\]
  with $g$ a polynomial as in the statement. We have two cases:
  
  If $\alpha$ is even, take $\alpha_1 \in \Z$ such that $\alpha = 2\alpha_1$. Then for  $L=z+\alpha_1$  we have $g(z)=z^2+\beta-\alpha_1^2+\alpha_1$.
  
  If $\alpha$ is odd, take $\alpha_1 \in \Z$ such that $\alpha=2\alpha_1+1$. Then for $L=z+\alpha_1$  we have $g(z)=z^2+z+\beta-\alpha_1^2$.
\end{proof}

\begin{proof}[Proof of Proposition \ref{prop:preperjulia2}] By Theorem \ref{thm:silverman}, a degree-2 polynomial has at most one nonrepelling periodic cycle in $\C$. It suffices to find one such cycle in each case. 

First consider the case in which $f(z)$ is conjugate  to $g(z)=z^2+c$. Since  $\mathcal{M}_2\cap \R=\left[-2,\frac{1}{4}\right]$,
 it suffices to consider the cases of $c=0,-1,-2$.

When $c=0$, we immediately see that $z_0=0$ is an attracting fixed point in $\K_f\setminus \J_f$. \

When $c=-2$, we have $g = T_2$ (the second Chebyshev polynomial), and $\J_f=[-2,2]=\K_f$.

Finally, when $c=-1$, we find that $\{-1,0\}$ is an attracting cycle. Indeed $g^2(z)=z^4-2z^2$ and 
$\frac{d g^2}{d z}=4z^3-4z$. This gives 
\[
\left. \frac{d g^2}{d z}\right|_{z=0}=\left. \frac{d g^2}{d z}\right|_{z=-1}=0.
\] 
(This is a general phenomenon: When a critical point $f$ is strictly periodic, one can show that the multiplier of the cycle will be $0$, and the cycle is called {\bf superattracting}.)
 
Now we consider the case in which $f$ is conjugate  to $g(z)=z^2+z+c$. Letting $L(z) = z-\frac 1 2$, we find that  $g^L = z^2+c+\frac{1}{4} \in \Q[z]$. 
Again using the fact that $\mathcal{M}_2\cap \R=\left[-2,\frac{1}{4}\right]$, we have $\K_f=\J_f$ for $c<-\frac{9}{4}$ and $c>0$ and we only need to check $c=0,-1,-2$.

When $c=0$, we have $g(z)=z^2+z$, and $z_0=0$ is a neutral fixed point since $g'(z)=1+2z$ and $g'(0)=1$. 
Since the multiplier is $1$, Theorem \ref{thm:multiplierunity}  implies $0\in \J_f$.

When $c=-1$, we have $g(z)=z^2+z-1$ and $z_0=-1$ is a neutral fixed point since $g'(z)=1+2z$ and $f'(-1)=-1$. 
Since the multiplier is $-1$, Theorem \ref{thm:multiplierunity}  implies $-1\in \J_f$.
 
Finally, when $c=-2$, we have $g(z)=z^2+z-2$. We claim that the roots of $z^3 + 2z^2 - z - 1$ give a neutral 3-cycle with multiplier 1. First notice that 
\begin{equation}\label{eq:f3}
g^3(z)-z=
(z^2 - 2)(z^3 + 2z^2 - z - 1)^2.
\end{equation}
The roots of the first factor in \eqref{eq:f3} are (repelling) fixed points. Let $z_0$ be a root of $z^3 + 2z^2 - z - 1=0$. Since $z^3 + 2z^2 - z - 1$ has exponent 2 in the factorization of $g^3(z)-z$, it must be a factor of the derivative$\frac{d (g^3(z)-z)}{d z}$, but this implies that 
\[
\left. \frac{d (g^3(z)-z)}{d z}\right|_{z=z_0}=0,
\]
 from which we get $\left. \frac{d g^3}{d z}\right|_{z=z_0}=1$.
Since the multiplier is $1$, Theorem \ref{thm:multiplierunity}  implies $z_0\in \J_f$.
 \end{proof}

\subsection{The family $z^d+c$}

A polynomial of the form $f_{d,c}(z)=z^d+c$ has only one critical point, namely $z_0=0$. Therefore, in this family we have the dichotomy between connected and totally disconnected Julia sets,  and we can define the Mandelbrot or Multibrot set 
\[\mathcal{M}_d=\left\{ c \in \C : \sup |f_{d,c}^n(0)|<\infty\right\}.\]

\begin{figure}[ht!]
\captionsetup[subfigure]{justification=centering}
    \centering
    \begin{subfigure}[t]{0.3\textwidth}
        \centering
        \includegraphics[scale=.3]{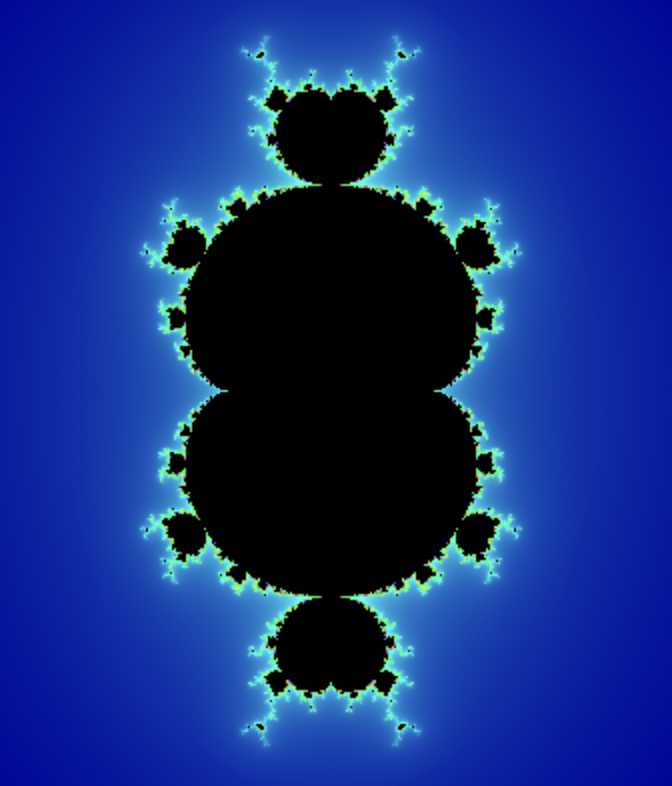}
        \caption{Multibrot set for $d=3$.}
    \end{subfigure}%
    ~ 
    \begin{subfigure}[t]{0.3\textwidth}
        \centering
        \includegraphics[height=1.2in]{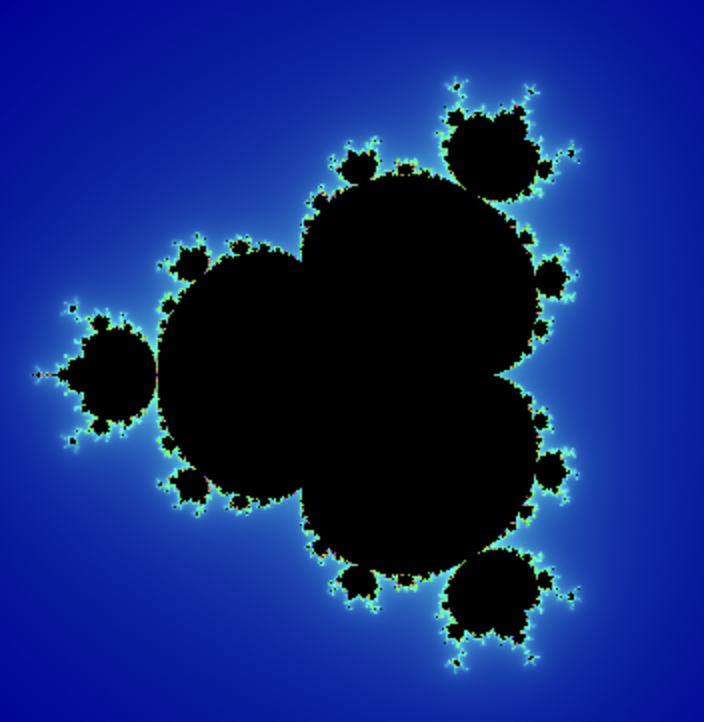}
        \caption{Multibrot set for $d=4$.}
    \end{subfigure}
    ~
        \begin{subfigure}[t]{0.3\textwidth}
        \centering
        \includegraphics[height=1.2in]{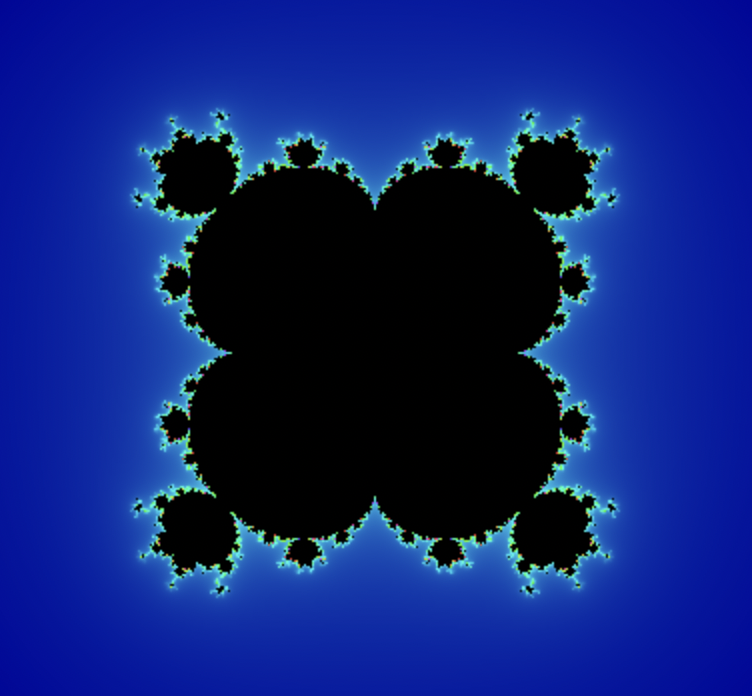}
        \caption{Multibrot set for $d=5$.}
    \end{subfigure}
\caption{Some Multibrot sets.}
\end{figure}

When $d$ is odd  and $d>1$, Paris\'e and Rochon~\cite{PariseRochon} proved 
\begin{equation}
\mathcal{M}_d\cap \R=\left [-\frac{d-1}{d^\frac{d}{d-1}},\frac{d-1}{d^\frac{d}{d-1}}\right],
\label{eq:dodd}
\end{equation}
and we remark that this implies $\mathcal{M}_d\cap \Z=\{0\}$ for $d>1$. 

When $d$ is even Paris\'e, Ransford, and Rochon~\cite{PariseRansfordRochon} proved 
\begin{equation}
\mathcal{M}_d\cap \R=\left[-2^\frac{1}{d-1},\frac{d-1}{d^\frac{d}{d-1}}\right ],
\label{eq:ceven}
\end{equation}
and we remark that this implies $\mathcal{M}_d\cap \Z=\{-1,0\}$ for $d>2$.

\begin{thm}\label{lem:conjcubic}
Let $f\in\Z[z]$ be a  monic polynomial that is affine conjugate over $\C$  to a polynomial of the form $z^d+c$ with $d>2$. Then $\PrePer(f) \not \subseteq\J_f$ if and only if either $c = 0$ or $d$ is even and $c = -1$.
\end{thm}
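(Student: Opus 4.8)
The plan is to mimic the structure of the degree-2 argument in Proposition~\ref{prop:preperjulia2}, but using the classification of $\mathcal{M}_d \cap \Z$ that precedes the statement. First I would reduce to the relevant finitely many values of $c$: by hypothesis $f$ is conjugate over $\C$ to $f_{d,c}(z) = z^d + c$, and we may replace $f$ by $f_{d,c}$ since conjugation preserves the inclusion $\PrePer(f) \subseteq \J_f$ (by Proposition~\ref{prop:conjugation}'s setup, $\J_{f^L} = L^{-1}(\J_f)$, $\PrePer(f^L) = L^{-1}(\PrePer(f))$). Now, since $f_{d,c}$ is unicritical with critical point $0$, we are in the Fatou--Julia dichotomy: if $c \notin \mathcal{M}_d$ then $\J_{f_{d,c}}$ is totally disconnected, hence $\J_{f_{d,c}} = \K_{f_{d,c}}$ and the inclusion holds trivially. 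So we only need to treat $c \in \mathcal{M}_d \cap \Z$. By equations~\eqref{eq:dodd} and~\eqref{eq:ceven} (Paris\'e--Rochon and Paris\'e--Ransford--Rochon), this set is $\{0\}$ when $d > 1$ is odd and $\{-1, 0\}$ when $d > 2$ is even. (One must also note that if $f \in \Z[z]$ is monic and conjugate over $\C$ to $z^d + c$, then $c$ is forced to be an \emph{algebraic integer} lying in $\R$; in fact the conjugacy can be arranged so that $c$ is a rational integer, or one observes directly that the critical value minus critical point is a $\Gal$-stable quantity — this bookkeeping is the one genuinely fiddly point and is the analogue of Lemma~\ref{lem:conj2}. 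Alternatively, the theorem's hypothesis already grants us a specific $c$, and the real content is: for which integer $c$ in the Multibrot set does the inclusion fail.)

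Next I would handle the two ``bad'' cases and show the inclusion genuinely fails there. If $c = 0$, then $f = z^d$ and $z_0 = 0$ is a superattracting fixed point, lying in the interior of $\K_f$ (the unit disc) and not in $\J_f$ (the unit circle); so $0 \in \PrePer(f) \setminus \J_f$. If $d$ is even and $c = -1$, then $\{0, -1\}$ is a $2$-cycle (since $f(0) = -1$ and $f(-1) = (-1)^d - 1 = 0$ using $d$ even), and it is superattracting because $0$ is a critical point on the cycle: by the chain rule $\left.\frac{d f^2}{dz}\right|_{z=0} = f'(-1) f'(0) = 0$. Hence this cycle lies in the interior of $\K_f$, not in $\J_f$, and again $\PrePer(f) \not\subseteq \J_f$.

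Finally I would prove the converse: for $c = 0$ with $d$ even — wait, $c=0$ is always excluded — I mean for the remaining integer values in the Multibrot set, namely $c = -1$ with $d$ \emph{odd}, I must show $\PrePer(f) \subseteq \J_f$. (For $c \notin \mathcal{M}_d$ this was already done via total disconnectedness.) When $d$ is odd and $c = -1$: by Theorem~\ref{thm:silverman}, $f_{d,-1}$ has at most $d - 1$ nonrepelling cycles in $\C$, and since $\J_f$ is the closure of the repelling cycles and is completely invariant, it suffices to show every nonrepelling cycle lies in $\J_f$. The critical point is $0$, and $f(0) = -1$, $f(-1) = (-1)^d - 1 = -2$ (using $d$ odd), so $0$ is strictly preperiodic but its forward orbit $0 \mapsto -1 \mapsto -2 \mapsto \cdots$ need not be periodic; I would instead argue that any nonrepelling cycle would force the critical orbit to be attracted to it (the standard fact that every attracting or neutral-Siegel/Cremer cycle absorbs a critical point, and for unicritical maps there is only one), and then examine whether the critical orbit of $0$ under $z^d - 1$ can be bounded-but-non-escaping in a way producing such a cycle — concluding that the only nonrepelling cycles that can occur have multiplier a root of unity (e.g. parabolic), whence by Theorem~\ref{thm:multiplierunity} they lie in $\J_f$. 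The main obstacle is exactly this last step: ruling out attracting (multiplier of modulus $<1$) cycles for $z^d - 1$, $d$ odd, which amounts to showing $-1 \in \partial \mathcal{M}_d$ rather than its interior — equivalently that the critical orbit does not converge to an attracting cycle. I expect this follows from the explicit description $\mathcal{M}_d \cap \R = [-\frac{d-1}{d^{d/(d-1)}}, \frac{d-1}{d^{d/(d-1)}}]$ together with the observation that $c = -1$ lies on the boundary of this real slice only when... actually one checks $\frac{d-1}{d^{d/(d-1)}} \ne 1$ for $d > 1$, so $-1$ is an \emph{interior} point of the real interval when $d$ is odd — meaning I have the cases backwards and for $d$ odd, $-1 \notin \mathcal{M}_d$ precisely when $1 < \frac{d-1}{d^{d/(d-1)}}$ fails, i.e. always, so $-1 \notin \mathcal{M}_d$ for $d$ odd $> 1$; then total disconnectedness finishes it and no delicate multiplier analysis is needed. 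Verifying this numerical inequality $\frac{d-1}{d^{d/(d-1)}} < 1$ for all integers $d > 1$ is then the real crux of the converse, and it is elementary (take logarithms).
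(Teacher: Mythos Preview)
Your overall strategy matches the paper's: reduce to integer $c$ lying in the Multibrot set, then check the finitely many surviving values. The two ``bad'' cases ($c=0$, and $c=-1$ with $d$ even) are handled exactly as the paper does, via the superattracting fixed point and the superattracting $2$-cycle.

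There is, however, a genuine gap at the reduction step. You assert that ``the conjugacy can be arranged so that $c$ is a rational integer'' and call it bookkeeping, but your proposed justification --- Galois-stability of the critical value minus the critical point --- only yields $c \in \Q$, not $c \in \Z$. This distinction is not cosmetic: there are infinitely many rationals in the real Multibrot interval, so without integrality you cannot reduce to finitely many cases. The paper fills this in by writing $f(z) = (z-\gamma)^d + b$ with $\gamma$ the unique critical point, reading off from the $z^{d-1}$-coefficient of $f'$ that $d(d-1)\gamma \in \Z$ (so $\gamma \in \Q$), and then from the coefficient of $z$ in $f$ that $d\gamma^{d-1} \in \Z$; if a prime $p$ divided the denominator of $\gamma$ one would need $p^{d-1} \mid d$, impossible since $2^{d-1} > d$ for $d > 2$. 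Hence $\gamma \in \Z$, then $b \in \Z$ from the constant term, and conjugating by $L(z)=z+\gamma$ gives $f^L = z^d + (b-\gamma)$ with $c = b-\gamma \in \Z$. This coefficient argument is the actual content of the ``fiddly point,'' and it is where $d>2$ is used.

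Your final paragraph on the converse is unnecessarily tangled. Once $c \in \Z$ is established, the text immediately preceding the theorem already records $\mathcal{M}_d \cap \Z = \{0\}$ for $d$ odd and $\{-1,0\}$ for $d>2$ even; there is no odd-$d$, $c=-1$ case to analyze at all, and the paper never invokes Theorem~\ref{thm:silverman} or Theorem~\ref{thm:multiplierunity} here. You do eventually rediscover this via the inequality $\tfrac{d-1}{d^{d/(d-1)}} < 1$, but the detour through nonrepelling cycles and critical-orbit attraction is superfluous and should be deleted.
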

\begin{proof} 
If $f\in\Z[x]$ is  affine conjugate over $\C$ to $z^d+c$, then $f$ has a unique critical point $\gamma \in \C$. Since $f$ is monic,  the derivative factors over $\C$ as  $f'(z) = d(z-\gamma)^{d-1}$.  Integrating, we get $f(z) = (z-\gamma)^d + b$ where $\gamma, b \in \C$, but we also know that $f(z) \in \Z[z]$.
Of course, the derivative $f'(z) =d(z-\gamma)^{d-1} \in \Z[z]$ as well. The coefficient of $z^{d-2}$ in $f'(z)$ is $-d(d-1)\gamma$, so we see that $\gamma \in \Q$. We claim that when $d>2$, in fact $\gamma \in \Z$.

Looking at the coefficient of $z$ in $f(z)$, we have $d \gamma^{d-1} \in \Z$. Let $p$ be a prime dividing the denominator of $\gamma$. Since $d\gamma^{d-1} \in \Z$, we must have $p^{d-1} \mid d$. But this is impossible since $p^{d-1}\geq 2^{d-1}>d$ when $d>2$.
Now consider the constant term of $f$, which is $(-\gamma)^d +b$. Since $\gamma \in \Z$, we have $b\in \Z$ as well.

Choose $L(z) = z +\gamma$, and we see that $f^L(z) = z^d + b-\gamma \in \Z[z]$.  So $f$ is conjugate to a polynomial of the form $z^d+c$ with $c\in \Z$.  

We know that  $c \not \in \mathcal{M}_d$ implies  $\PrePer(f)\subseteq\J_f$.    From equations~\eqref{eq:dodd} and~\eqref{eq:ceven}, we deduce that for $c\in \Z$, $c\not \in \mathcal{M}_d$ iff  $c\not =0$ for $d$ odd and $c\not =0, -1$ for $d$ even. 
  Thus, it suffices to consider the exceptional cases $c=0$ and $c=-1$. 
 
 We already know that for the power function $z^d + 0$, the point $z_0=0$ is an attracting fixed point in $\K_f\setminus \J_f$. 
 
 Now consider $g(z)=z^d-1$ with $d$ even. In this case  we find that $\{-1,0\}$ is an attracting cycle. 
 Indeed $g^2(z)=(z^d-1)^d-1$ and $\frac{d g^2}{d z}=d^2z^{d-1}(z^d-1)^{d-1}$. This gives $\left. \frac{d g^2}{d z}\right|_{z=0}=\left. \frac{d g^2}{d z}\right|_{z=-1}=0$.
\end{proof}

One might hope that for $f \in \Z[z]$, if the coefficients of $f$ are sufficiently large, then all critical points will have unbounded orbit. In this case the Julia set would be totally disconnected, so that again $\PrePer(f) \subseteq \J_f$.

 \bibliographystyle{amsalpha}

\bibliography{Bibliography}

\end{document}